\crefname{section}{Section}{Sections}
\crefname{subsection}{\S}{\S\S}
\crefname{subsubsection}{\S}{\S\S}
\theoremstyle{plain}
\newtheorem{lemma}{Lemma}[section]
\newtheorem{proposition}[lemma]{Proposition}
\newtheorem{corollary}[lemma]{Corollary}
\newtheorem{theorem}[lemma]{Theorem}
\theoremstyle{plain}
\newtheorem{theoremN}{Theorem}
\theoremstyle{plain}
\newtheorem{definition}[lemma]{Definition}
\newtheorem{example}[lemma]{Example}
\newtheorem{remark}[lemma]{Remark}
\newtheorem{remarks}[lemma]{Remarks}
\crefname{definition}{definition}{definitions}
\crefname{ex}{example}{examples}
\crefname{exs}{example}{examples}
\crefname{remark}{remark}{remarks}
\crefname{remarks}{remark}{remarks}
\crefname{convention}{convention}{conventions}
\crefname{notation}{notation}{notations}
\crefname{table}{table}{tables}
\crefname{lemma}{lemma}{lemmas}
\crefname{proposition}{proposition}{propositions}
\crefname{propositionN}{proposition}{propositions}
\crefname{corollary}{corollary}{corollaries}
\crefname{corollaryN}{corollary}{corollaries}
\crefname{theorem}{theorem}{theorems}
\crefname{theoremN}{theorem}{theorems}
\crefname{enumi}{}{}
\crefname{assumption}{assumption}{Assumptions}
\crefname{construction}{construction}{Constructions}
\crefname{equation}{}{}
\numberwithin{equation}{section}
\theoremstyle{nonumberplain}
\newtheorem{proof}{Proof}
\newcommand\pf[1]{\newtheorem{#1}{Proof of \Cref{#1}}}
\newcommand\bC{{\mathbb C}}
\newcommand\bR{{\mathbb R}}
\newcommand\bS{{\mathbb S}}
\newcommand\bZ{{\mathbb Z}}
\newcommand\cC{{\mathcal C}}
\newcommand\cE{{\mathcal E}}
\newcommand\cF{{\mathcal F}}
\newcommand\cG{{\mathcal G}}
\newcommand\cP{{\mathcal P}}
\newcommand\cQ{{\mathcal Q}}
\newcommand\cS{{\mathcal S}}
\newcommand\cT{{\mathcal T}}
\newcommand\cU{{\mathcal U}}
\newcommand\ol{\overline}
\DeclareMathOperator{\id}{id}
\DeclareMathOperator{\im}{\mathrm{im}}
\DeclareMathOperator{\spn}{span}
\DeclareMathOperator{\wo}{\widehat{\otimes}}
\newcommand{\cat}[1]{\textsc{#1}}
\newcommand{\qedhere}{\mbox{}\hfill\ensuremath{\blacksquare}}
\newcommand{\xrightarrowdbl}[2][]{%
  \xrightarrow[#1]{#2}\mathrel{\mkern-14mu}\rightarrow
}
\title{Symmetric monoidal categories of conveniently-constructible Banach bundles}
\author{Alexandru Chirvasitu}
\begin{document}

\date{}

\newcommand{\Addresses}{{
  \bigskip
  \footnotesize

  \textsc{Department of Mathematics, University at Buffalo}
  \par\nopagebreak
  \textsc{Buffalo, NY 14260-2900, USA}  
  \par\nopagebreak
  \textit{E-mail address}: \texttt{achirvas@buffalo.edu}


}}

\maketitle

\begin{abstract}
  We show that a continuously-normed Banach bundle $\mathcal{E}$ over a compact Hausdorff space $X$ whose space of sections is algebraically finitely-generated (f.g.) over $C(X)$ is locally trivial (and hence the section space is projective f.g over $C(X)$); this answers a question of I. Gogi\'c. As a preliminary we also provide sufficient conditions for a quotient bundle to be continuous phrased in terms of the Vietoris continuity of the unit-ball maps attached to the bundles. Related results include (a) the fact that the category of topologically f.g. continuous Banach bundles over $X$ is symmetric monoidal under the (fiber-wise-maximal) tensor product, (b) the full faithfulness of the global-section functor from topologically f.g. continuous bundles to $C(X)$-modules and (c) the consequent identification of the algebraically f.g. bundles as precisely the rigid objects in the aforementioned symmetric monoidal category. 
\end{abstract}

\noindent {\em Key words:
  $F_{\sigma}$ set;
  $G_{\delta}$ set;
  Banach bundle;
  Banach module;
  Schur functor;
  adjoint functor;
  automatic continuity;
  closed category;
  convex module;
  exterior power;
  fiber;
  finitely-generated;
  flat module;
  inner hom;
  monoidal;
  projective module;
  rigid;
  section;
  stalk;
  symmetric monoidal;
  symmetric power;
  tensor product;
}

\vspace{.5cm}

\noindent{MSC 2020: 46H25; 18M05; 13C10; 46J10; 46E25; 46M20; 13C11; 18A30; 18D15; 18D20}

\tableofcontents

\section*{Introduction}

This is a follow-up of sorts to \cite{2405.14518v1}, on and around continuous Banach bundles $\cE\xrightarrowdbl{\pi}X$ (in the sense of \cite[Definition 13.4]{fd_bdl-1}) over compact Hausdorff base spaces whose spaces $\Gamma(\cE)$ of sections are {\it topologically} finitely-generated (f.g. for short) over the algebra $C(X)$ of continuous complex-valued functions on $X$: there are finitely many elements of $\Gamma(\cE)$ with dense $C(X)$-span.

The earlier paper sought to characterize topological finite generation in bundle-theoretic terms: the condition is equivalent to
\begin{itemize}[wide]
\item the {\it subhomogeneity} of the bundle (i.e. the boundedness of the fiber dimensions $\dim\cE_x$, $x\in X$);

\item the trivializability of the restrictions $\cE|_{X_d}$ by finite open covers for the {\it strata}
  \begin{equation*}
    X_d:=\left\{x\in X\ |\ \dim \cE_x=d\right\}
  \end{equation*}
  or: the locally trivial bundles $\cE|_{X_d}$ are {\it of finite type} \cite[Definition 3.5.7]{hus_fib};

\item and the requirement that the closed subsets
  \begin{equation*}
    X_{\le d}:=\left\{x\in X\ |\ \dim \cE_x\le d\right\}\subseteq X
  \end{equation*}
  be {\it $G_{\delta}$}, i.e. \cite[Problem 3H]{wil_top} countable intersections of opens. 
\end{itemize}

The {\it constructibility} of the paper's title refers precisely to this stratification of $X$ by $X_d$ carrying finite local trivializations for $\cE$. The term is very familiar in the sheaf-theoretic literature, where {\it $\cS$-constructible sheaves} (for a {\it stratification} $\cS$, which we can here treat loosely as meaning roughly the sort of partition $X=\coprod_d X_d$ provides) are, again roughly, those which resemble locally trivial bundles over each stratum: \cite[\S 2.2.10]{bbd}, \cite[Definition 4.5.3]{htt_d} and \cite[Definition 8.1.3]{ks_shv-mfld}, say, are variants of the general notion. 

In very much the same circle of ideas, the initial motivation for the material below was \cite[Problem 3.12]{gog_top-fg}: given
\begin{itemize}[wide]
\item the category equivalence between locally trivial finite-rank vector bundles over (compact Hausdorff) $X$ and that of projective f.g. $C(X)$-modules (the celebrated {\it (Serre-)Swan Theorem} of \cite[Theorem I.6.18]{kar_k}; originally \cite[Theorem 2]{zbMATH03179258});

\item and the fact \cite[Corollary 15.4.8.]{wo} that algebraically f.g. {\it Hilbert} modules (in the sense of \cite[Definition 15.1.5]{wo}) over a unital $C^*$-algebra are automatically projective
\end{itemize}
the cited \cite[Problem 3.12]{gog_top-fg} asks whether, in the same fashion, a continuous Banach bundle with {\it algebraically} (as opposed to only topologically) f.g. section space $\Gamma(\cE)$ (this time only a {\it Banach} rather than Hilbert module over $C(X)$) is locally trivial. \Cref{th:loctrivfg} gives the affirmative answer. 

\begin{theoremN}\label{thn:loctrivfg}
  Let $\cE\xrightarrowdbl{\pi}X$ be a continuous Banach bundle over a compact Hausdorff space.

  $\Gamma(\cE)$ is f.g. as a $C(X)$-module precisely when
  \begin{itemize}[wide]
  \item $\cE$ is locally trivial of finite rank;

  \item or, equivalently, $\Gamma(\cE)$ is projective f.g. as a $C(X)$-module.  \qedhere
  \end{itemize}  
\end{theoremN}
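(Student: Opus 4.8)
The plan is to prove the three conditions equivalent by establishing $(2)\Rightarrow(3)\Rightarrow(1)\Rightarrow(2)$, where $(1)$, $(2)$, $(3)$ denote algebraic finite generation of $\Gamma(\cE)$, local triviality of finite rank, and projective finite generation respectively. Two of these are immediate: $(2)\Rightarrow(3)$ is the (Serre--)Swan theorem cited above, since a locally trivial finite-rank bundle is a vector bundle once one forgets the fibrewise norm (irrelevant to the $C(X)$-module structure) and its section module is thus projective f.g.; and $(3)\Rightarrow(1)$ is the tautology that a projective f.g.\ module is f.g. All the content is in $(1)\Rightarrow(2)$.

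For $(1)\Rightarrow(2)$ I would fix algebraic generators $s_1,\dots,s_n$ of $\Gamma(\cE)$ and study the fibrewise dimension $d(x):=\dim\cE_x$. As evaluation $\Gamma(\cE)\to\cE_x$ is onto and the $s_i$ generate, $\cE_x=\spn\{s_i(x)\}$, whence $d(x)\le n$; and because linear independence of finitely many sections is an open condition in a continuous bundle, $d$ is lower semicontinuous. The key is to promote this to upper semicontinuity, so that $d$ is locally constant. Granting local constancy, near any $x_0$ one takes $s_1,\dots,s_{d(x_0)}$ independent at $x_0$ (hence on a neighborhood $U$); as these already span the $d(x_0)$-dimensional fibres over $U$, the map $(x,c)\mapsto\sum_i c_is_i(x)$ is a bundle isomorphism $U\times\bC^{d(x_0)}\xrightarrow{\sim}\cE|_U$, giving local triviality. (Equivalently, local constancy of $d$ makes the strata $X_d$ clopen, so the stratumwise trivializations recalled from \cite{2405.14518v1} become honest local trivializations of $\cE$.)

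To obtain upper semicontinuity I would pass to commutative algebra. Writing $\fm_x\subset C(X)$ for the ideal of functions vanishing at $x$, the crucial claim is that the algebraic fibre $\Gamma(\cE)/\fm_x\Gamma(\cE)$ coincides with the genuine fibre $\cE_x$. One always has a surjection $\Gamma(\cE)/\fm_x\Gamma(\cE)\twoheadrightarrow\Gamma(\cE)/N_x\cong\cE_x$, where $N_x=\{s:s(x)=0\}$, and a Urysohn-function argument identifies $N_x=\overline{\fm_x\Gamma(\cE)}$; so it suffices to see that $\fm_x\Gamma(\cE)$ is already closed. Here finitely many algebraic generators are indispensable: the bounded surjection $\Phi\colon C(X)^n\twoheadrightarrow\Gamma(\cE)$, $(c_i)\mapsto\sum_i c_is_i$, is open by the open mapping theorem, and $\fm_x\Gamma(\cE)=\Phi\bigl((\fm_x)^n\bigr)$; since $(\fm_x)^n$ has finite codimension $n$ in $C(X)^n$, the subspace $(\fm_x)^n+\ker\Phi$ is the preimage of a finite-dimensional (hence closed) subspace of $C(X)^n/(\fm_x)^n\cong\bC^n$, so it is closed, and therefore so is its $\Phi$-image $\fm_x\Gamma(\cE)$. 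Consequently $d(x)=\dim_\bC\bigl(\Gamma(\cE)\otimes_{C(X)}\kappa(x)\bigr)$, and the standard upper semicontinuity of fibre dimension for a finitely generated module---a basis of the fibre at $x_0$ generates on a Zariski-open, hence (pulling back along $X\to\Spec C(X)$) topologically open, neighborhood of $x_0$ by Nakayama---completes the argument.

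I expect the closedness of $\fm_x\Gamma(\cE)$ to be the crux. This is precisely the step that fails for merely topologically f.g.\ bundles, where $\fm_x\Gamma(\cE)$ is only dense in $N_x$ and the algebraic fibre can strictly dominate $\cE_x$; it is exactly this collapse of the algebraic and topological fibres, forced by finitely many honest generators via the finite-codimension/open-mapping mechanism, that makes $d$ continuous and the bundle locally trivial.
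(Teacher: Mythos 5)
Your proposal is correct, and it proves the one substantive implication (algebraic finite generation $\Rightarrow$ local triviality) by a genuinely different route from the paper. The paper's proof of \Cref{th:loctrivfg} is a chain of reductions: a finite closed cover making $\cE$ conditionally trivial (\Cref{le:fincov}), induction on the number of strata, and repeated passage to quotients by trivial rank-one subbundles --- the step that requires the quotient-continuity machinery of \Cref{th:ballsvietcont} and \Cref{cor:quotbyloctriv} --- until only fibre dimensions $0$ and $1$ remain, at which point the claim becomes that an algebraically f.g.\ ideal $C_0(U)\subseteq C(X)$ has clopen $U$; that final step is delegated to the fact \cite[Corollary 15.4.8]{wo} that algebraically f.g.\ Hilbert $C(X)$-modules are projective, plus Serre--Swan. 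You instead argue pointwise, with no bundle surgery at all: your key lemma, that $\fm_x\Gamma(\cE)$ is closed (open mapping theorem applied to the presentation $\Phi\colon C(X)^n\twoheadrightarrow\Gamma(\cE)$, plus the observation that $(\fm_x)^n+\ker\Phi$ is a closed subspace saturated under $\ker\Phi$, so its image under the quotient map $\Phi$ is closed), identifies the algebraic fibre $\Gamma(\cE)/\fm_x\Gamma(\cE)$ with $\cE_x$ via the Urysohn identification $N_x=\overline{\fm_x\Gamma(\cE)}$; Nakayama then gives upper semicontinuity of $\dim\cE_x$, which together with the lower semicontinuity automatic for (F) bundles makes the dimension locally constant and the strata clopen. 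All of these steps are correctly assembled. Two points are glossed but are standard and not gaps: the claim that the fibrewise-bijective map $U\times\bC^{d(x_0)}\to\cE|_U$ has continuous inverse (for (F) bundles with finite-dimensional fibres this follows from a compactness argument on unit spheres --- and your parenthetical alternative, namely that clopen strata turn the stratumwise trivializations of \cite{2405.14518v1} into honest local trivializations, bypasses it entirely), and the surjectivity of evaluation $\Gamma(\cE)\to\cE_x$ (Banach bundles over compact Hausdorff spaces have enough sections). What your route buys is economy and self-containedness: it avoids both the Vietoris/quotient-bundle theory and the Hilbert-module black box, and it isolates a clean reusable principle --- finitely many honest generators force the algebraic and analytic fibres to coincide --- which resonates with the paper's own automatic-continuity theme (\Cref{th:autocont}). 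What the paper's route buys is that its intermediate tools (quotient continuity, the closed covers of \Cref{le:fincov}) are developed anyway because they are needed for the tensor-product and monoidal results elsewhere in the paper.
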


The discussion branches out in a number of directions, in part in order to develop some background necessary for \Cref{th:loctrivfg} itself and in part as a natural outgrowth therefrom. One useful tool in simplifying the bundles one works with, for instance, is that of passing to a {\it quotient} $\cE/\cF$ \cite[\S 9]{gierz_bdls} by a {\it subbundle} $\cF\le \cE$ \cite[\S 17, Exercise 41]{fd_bdl-1}. That requires knowledge of when such a quotient again has continuous (rather than only upper semicontinuous) norm. \Cref{th:ballsvietcont} gives such a continuity criterion (along the same lines as \cite[Proposition 5.7]{laz_bb-sel}), whose \Cref{cor:quotbyloctriv} we record here as a sample:

\begin{theoremN}\label{thn:quotbyloctriv}
  A quotient of a continuous Banach bundle over a compact Hausdorff space by a finite-rank locally trivial subbundle is again continuous.  \qedhere
\end{theoremN}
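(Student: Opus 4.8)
The plan is to obtain the statement as an application of the Vietoris-continuity criterion of \Cref{th:ballsvietcont}, whose hypotheses I would check for a finite-rank locally trivial subbundle. Since the continuity of the quotient's norm is a condition that can be tested locally over the base, and since local triviality lets me cover the compact $X$ by finitely many closed sets $N$ over each of which $\cF$ carries a continuous frame $f_1,\dots,f_d\in\Gamma(\cF)$ restricting to a basis of every fibre $\cF_x$, $x\in N$, I would first fix such an $N$ and work there. Over $N$ the fibrewise unit balls of $\cF$ are the sets $B(\cF_x)=\{\sum_i\lambda_i f_i(x):\lambda\in\bC^d,\ \|\sum_i\lambda_i f_i(x)\|\le 1\}$, and the task becomes showing that $x\mapsto B(\cF_x)$ is Vietoris continuous as a map into the closed subsets of the total space $\cE$; granting the ambient continuity of $\cE$, this is exactly the input \Cref{th:ballsvietcont} requires.

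The technical heart is a uniform linear-independence estimate for the frame. The assignment $(x,\lambda)\mapsto\|\sum_i\lambda_i f_i(x)\|$ is continuous on $N\times\bC^d$, because the norm of $\cE$ is continuous and the fibrewise vector-space operations and the sections $f_i$ are continuous; for each fixed $x$ it vanishes only at $\lambda=0$, since $f_1(x),\dots,f_d(x)$ are linearly independent. Restricting to the compact product $N\times\{\lambda:\|\lambda\|=1\}$ and using compactness I would extract a constant $c>0$ with $\|\sum_i\lambda_i f_i(x)\|\ge c\|\lambda\|$ for all $x\in N$ and all $\lambda\in\bC^d$. This bound confines, uniformly in $x\in N$, the coefficient vectors $\lambda$ that can contribute to $B(\cF_x)$ to a single compact subset of $\bC^d$, and it is precisely this uniform confinement that makes $x\mapsto B(\cF_x)$ both lower and upper Vietoris semicontinuous.

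With the Vietoris continuity of the ball map of $\cF$ established over $N$, \Cref{th:ballsvietcont} yields the continuity of $\cE/\cF$ over $N$; as $N$ ranges over the finite cover and continuity of the norm is local, $\cE/\cF$ is continuous over all of $X$.

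I expect the uniform estimate $\|\sum_i\lambda_i f_i(x)\|\ge c\|\lambda\|$ to be the main obstacle, and it is exactly where both hypotheses are consumed. In terms of the quotient norm $d(s(x),\cF_x)=\inf_\lambda\|s(x)-\sum_i\lambda_i f_i(x)\|$ attached to a section $s$ of $\cE$, upper semicontinuity in $x$ is automatic (extend a near-minimizer at a point to a continuous section of $\cF$), whereas lower semicontinuity is the delicate direction: without the bound the minimizing coefficients could escape to infinity as $x$ approaches a limit point, so that no limiting fibre element of $\cF$ survives and the infimum drops in the limit. Finite rank makes the coefficient space finite-dimensional, so bounded sets are relatively compact; local triviality together with compactness of $X$ promotes the pointwise independence of the frame to the uniform lower bound. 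Both inputs therefore enter at this single step.
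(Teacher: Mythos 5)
Your proposal is correct and takes essentially the same route as the paper: \Cref{cor:quotbyloctriv} is obtained there precisely by applying the upper-Vietoris criterion of \Cref{th:ballsvietcont} to a finite-rank locally trivial subbundle, with the hypothesis declared ``plainly'' satisfied. Your compactness argument --- the uniform lower bound $\|\sum_i\lambda_i f_i(x)\|\ge c\|\lambda\|$ over a closed trivializing patch, which confines the unit-ball coefficient vectors to a fixed compact subset of $\bC^d$ and hence yields upper-Vietoris continuity of the ball map --- simply supplies, correctly, the verification that the paper leaves implicit.
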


\Cref{se:tens} is a spin-off from an earlier attempt to prove \Cref{th:loctrivfg} by means of tensor-product constructions: assuming for simplicity that $\cE$ has only two fiber dimensions $d_0<d_1$, one would like to reduce the discussion to fiber dimensions $0$ and $1$ instead by substituting the {\it $d_1^{st}$ exterior power} $\bigwedge^{d_1}\cE$. This will annihilate fibers over $X_{d_0}$ and render those over $X_{d_1}$ 1-dimensional, as desired. The issue is to formalize the intuition that the fiber-wise operation $\bigwedge^{d_1}$ makes sense and behaves as one expects under appropriate finite generation conditions (e.g. bundle-theoretic tensor operations match their module and/or Banach-module counterparts). \Cref{th:tensidentify,th:algeqtop,th:autocont} and \Cref{cor:eotffg,cor:symmmon,cor:schurfunct,cor:tfgfullinmod} all touch on aspects of this problem, and we compress them into a preview.

\begin{theoremN}\label{thn:bunsymmoncat}
  Let $X$ be a compact Hausdorff space and $\tensor*[^{\cat{tfg}}^{(F)}]{\cat{Bun}}{_X^{\infty}}$ the category of topologically f.g. continuous Banach bundles thereon, with continuous, fiber-wise linear morphisms.

  \begin{enumerate}[(1),wide]
  \item $\tensor*[^{\cat{tfg}}^{(F)}]{\cat{Bun}}{_X^{\infty}}$ is closed under the (fiber-wise Banach-maximal) tensor product $\wo_X$ ($\astrosun_X$ in \cite[post Corollary 1.10]{zbMATH04134853}), as is the subcategory
    \begin{equation*}
      \tensor*[^{\cat{fg}}^{(F)}]{\cat{Bun}}{_X^{\infty}}
      \subset
      \tensor*[^{\cat{tfg}}^{(F)}]{\cat{Bun}}{_X^{\infty}}
    \end{equation*}
    of algebraically f.g. bundles. 

  \item Regarding the two categories as symmetric monoidal under that tensor product, The smaller consists of precisely the {\it rigid} \cite[Definition 2.10.11]{egno} objects in the larger. 

  \item The global-section functor $\Gamma$ restricts on $\tensor*[^{\cat{tfg}}^{(F)}]{\cat{Bun}}{_X^{\infty}}$ to a fully faithful symmetric monoidal embedding into both
    \begin{itemize}[wide]
    \item the category of unital Banach $C(X)$-modules with continuous module morphisms, symmetric monoidal under the maximal tensor product $\wo_{C(X)}$ \cite[\S III.3.8]{clm_ban-mod};

    \item and that of plain algebraic unital $C(X)$, with the algebraic tensor product $\otimes_{C(X)}$.  \qedhere
    \end{itemize}
  \end{enumerate}
\end{theoremN}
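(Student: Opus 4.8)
The plan is to prove the three clauses in the order (1), (3), (2): once $\Gamma$ is available as a fully faithful symmetric monoidal embedding, clause (2) follows formally from the analogous statement for modules together with \Cref{thn:loctrivfg}. Throughout I would use the fiberwise description $(\cE\wo_X\cF)_x=\cE_x\wo\cF_x$ (the finite-dimensional algebraic tensor product under the maximal cross-norm) and reduce bundle statements to fiber statements through the quotient maps $\Gamma(\cE)\twoheadrightarrow\Gamma(\cE)/\overline{I_x\Gamma(\cE)}\cong\cE_x$, where $I_x:=\{f\in C(X):f(x)=0\}$. For clause (1) the algebraically f.g. subcategory is the easy half: by \Cref{thn:loctrivfg} both factors are locally trivial of finite rank, and the fiberwise tensor of two local trivializations trivializes $\cE\wo_X\cF$, so the latter is again locally trivial of finite rank, hence algebraically f.g. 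For the topologically f.g. case I would first check that the fiberwise construction is a \emph{continuous} (not merely upper semicontinuous) bundle via the unit-ball criterion of \Cref{th:ballsvietcont}: the unit ball of $\cE_x\wo\cF_x$ is the closed absolutely convex hull of $\{\xi\otimes\eta:\|\xi\|\le1,\ \|\eta\|\le1\}$, and its Vietoris continuity in $x$ follows from that of the ball maps of $\cE$ and $\cF$, since applying the jointly continuous tensor map and then taking closed convex hulls are Vietoris-continuous operations. Topological finite generation then follows by a partition-of-unity argument over compact $X$, upgrading the fiberwise spanning of the products $(s_i\odot t_j)(x):=s_i(x)\otimes t_j(x)$ to a dense $C(X)$-span.

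Clause (3) is the technical heart. For monoidality I would show that the canonical comparison $\Gamma(\cE)\otimes_{C(X)}\Gamma(\cF)\to\Gamma(\cE\wo_X\cF)$, $s\otimes t\mapsto s\odot t$, is an isomorphism, and that for topologically f.g. inputs the algebraic tensor product already coincides with its completion, so that a single map witnesses monoidality into both the Banach-module target (with $\wo_{C(X)}$) and the plain-module target (with $\otimes_{C(X)}$). The fiber functors $-\otimes_{C(X)}\bC_x$ jointly detect whether this comparison is an isomorphism, and I would verify it fiberwise using that the continuous structure from (1) and the stratumwise local triviality recalled in the introduction let the fiberwise isomorphisms be glued; surjectivity onto a dense submodule is the fiberwise spanning above, while the identification of the algebraic tensor product with its completion is extracted stratum by stratum. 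Full faithfulness is an automatic-continuity statement: any $C(X)$-linear $\phi\colon\Gamma(\cE)\to\Gamma(\cF)$ is pinned down by the finitely many values $\phi(s_i)\in\Gamma(\cF)$ on a topological generating set, which via a closed-graph argument over compact $X$ forces $\phi$ to be bounded; a bounded $C(X)$-linear map then descends through the fiber quotients to a fiberwise-linear family $\phi_x\colon\cE_x\to\cF_x$ assembling into a continuous bundle morphism, whereas faithfulness is clear because sections are fiberwise dense. Since every algebraic module map is thereby seen to be continuous and bundle-induced, this simultaneously yields the embeddings into both target categories.

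For clause (2) I argue formally. If $\cE$ is algebraically f.g., then by \Cref{thn:loctrivfg} it is locally trivial of finite rank, its fiberwise dual $\cE^{*}$ is too, and the fiberwise evaluation $\cE\wo_X\cE^{*}\to\underline{\bC}$ and coevaluation $\underline{\bC}\to\cE^{*}\wo_X\cE$ --- continuous because checked on local trivializations --- satisfy the triangle identities fiberwise, exhibiting $\cE$ as rigid. Conversely, if $\cE$ is rigid then $\Gamma$, being symmetric monoidal by (3), carries it to a dualizable object $\Gamma(\cE)$ of $(\mathrm{Mod}_{C(X)},\otimes_{C(X)})$; dualizable modules over a commutative ring are precisely the finitely generated projective ones, so $\Gamma(\cE)$ is projective f.g. and $\cE$ is algebraically f.g. by \Cref{thn:loctrivfg} once more. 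Hence the rigid objects are exactly the algebraically f.g. bundles.

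The step I expect to be the main obstacle is the automatic-continuity half of (3): promoting bare $C(X)$-linearity to boundedness, and then to a genuinely continuous bundle morphism, in the absence of a splitting of a generating surjection $C(X)^{m}\twoheadrightarrow\Gamma(\cE)$. By comparison the continuity of the tensor bundle and the monoidal comparison isomorphism are delicate but more contained, and clause (2) is bookkeeping once (1) and (3) are in hand.
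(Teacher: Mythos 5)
Your clause (2) matches the paper's route (via \Cref{cor:tfgfullinmod} and \Cref{res:innerhom}: dualizable $=$ projective f.g.\ over a commutative ring, plus \Cref{th:loctrivfg}), and your overall architecture --- tensor closure, then the section functor, then rigidity as formal bookkeeping --- is the paper's. But both halves of what you yourself call the technical heart, clause (3), rest on arguments that do not work.

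First, full faithfulness. You say a $C(X)$-linear $\phi\colon\Gamma(\cE)\to\Gamma(\cF)$ is ``pinned down by the finitely many values $\phi(s_i)$ on a topological generating set.'' This is circular: a purely algebraic module map is determined by its values on an \emph{algebraically} generating set, not on a set with merely dense span --- determination by values on a dense submodule is equivalent to the continuity you are trying to prove. The closed-graph argument has the same defect: to apply the closed graph theorem you must first show the graph of $\phi$ is closed, and for a bare module map there is no mechanism to do so. The paper's \Cref{th:autocont} gets continuity by an entirely different route: \Cref{le:fincov}, \Cref{cor:le:fincov:fincov} and \Cref{pr:pullbclcov} reduce to section modules of the form $\bigoplus_j C_0(U_{ij})$, and then a genuine automatic-continuity theorem for module maps out of ideals of $C(X)$ (\cite[Corollary 2.9.30(ix)]{dales_autocont}) is invoked. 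That input from the Banach-algebra literature is doing real work and cannot be replaced by soft functional analysis.

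Second, monoidality. Your claim that ``the fiber functors $-\otimes_{C(X)}\bC_x$ jointly detect whether this comparison is an isomorphism'' is false for general $C(X)$-modules; it is true for f.g.\ projective ones, but at this stage of the argument you do not yet know $\Gamma(\cE)\otimes_{C(X)}\Gamma(\cF)$ is of that kind --- indeed for merely topologically f.g.\ bundles it is not. The paper's \Cref{ex:notmono} shows how badly plain algebraic $C(X)$-modules fail local-to-global detection even along a two-element closed cover. The paper instead proves injectivity of the algebraic-to-completed comparison (\Cref{th:algeqtop}) by induction on strata: Swan's theorem settles the locally trivial case, and Cohen factorization pushes kernel elements off the lowest stratum; the identification with $\Gamma(\cE\wo_X\cF)$ is \Cref{th:tensidentify}, whose closed-cover reduction is licensed by \Cref{cor:injbijclcov} / \Cref{pr:pullbclcov} --- a Banach-module gluing statement that, again, fails for plain modules. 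Finally, a lesser but real misstep in clause (1): \Cref{th:ballsvietcont} is a criterion for a \emph{quotient} $\cE/\cF$ of an (F) bundle by a subbundle with upper-Vietoris-continuous ball map to be (F); it is not a continuity criterion for an arbitrary fiberwise construction, so it cannot be cited to make $\cE\wo_X\cF$ continuous. (Your use of \Cref{thn:loctrivfg} for the algebraically f.g.\ half of (1) is legitimate but diverges from the paper, which deliberately keeps these results independent of that theorem --- see \Cref{re:indepofth} --- deducing algebraic finite generation instead from the surjectivity of the algebraic-to-completed tensor comparison in \Cref{cor:eotffg}.)
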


\subsection*{Acknowledgements}

I am grateful for illuminating exchanges with I. Gogi{\'c} and A. J. Lazar. This work was partially supported by NSF grant DMS-2001128. 

\section{Preliminaries}\label{se:prel}

Virtually everything below goes through fine over the reals, but we assume Banach spaces (along with algebras, bundles and so on) complex for definiteness. 

\subsection{Bundles}\label{subse:prel:bdl}

For general background on Banach bundles we refer variously to \cite{dg_ban-bdl,fd_bdl-1,gierz_bdls} and others, with more precise citations where needed. Without further specification, the phrase refers to the {\it (H) Banach bundles} (for `Hofmann' \cite[\S 3]{zbMATH03539905}) of \cite[Definition 1.1 and subsequent discussion]{dg_ban-bdl}:
\begin{itemize}[wide]
\item A continuous open surjection $\cE\xrightarrowdbl{\pi}X$ with Banach-space {\it fibers} (or {\it stalks}) $\cE_x:=\pi^{-1}(x)$, $x\in X$;

\item With continuous scalar multiplication and addition;

\item and {\it upper semicontinuous} norm (in the sense of \cite[Problem 7K]{wil_top}: preimages of $(\infty,t)$, $t\in \bR$ are open);

\item and such that
  \begin{equation*}
    V_{<\varepsilon}(U\mid 0\mid 0_x)
    :=
    \left\{s\in \cE\ |\ \pi(s)\in U\text{ and }\|s\|<\varepsilon\right\}
    ,\quad
    \text{neighborhood }U\ni x\in X
    ,\quad
    \varepsilon>0
  \end{equation*}
  form a fundamental system of neighborhoods around the trivial element $0_x\in \cE_x$.   
\end{itemize}

The notation $V_{\bullet}(\bullet)$ fits into the broader pattern
\begin{equation}\label{eq:vmidmid}
  V_{<\varepsilon}(U\mid \sigma\mid p)
  :=
  \left\{s\in \cE\ |\ \pi(s)\in U\text{ and }\|s-\sigma(\pi(p))\|<\varepsilon\right\}
\end{equation}
for a {\it section} $X\xrightarrow{\sigma}\cE$ of the bundle (i.e. \cite[p.9]{dg_ban-bdl} a continuous right inverse to $\pi$), a point $p\in \cE$ and an open neighborhood $U\ni \pi(p)$. Spaces of sections of interest in the sequel are $\Gamma(\cE)$ (all), $\Gamma_b(\cE)$ (bounded) and $\Gamma_0(\cE)$ (vanishing at infinity, for locally compact base spaces $X$). These all coincide when $X$ is compact Hausdorff (which will mostly be the case). 

Banach bundles with {\it continuous} norm are either themselves termed {\it continuous} or, for variety and following \cite[p.8]{dg_ban-bdl}, {\it (F)} (for `Fell': \cite[Definition preceding Proposition 1.1]{fell_ext}, \cite[Definition 13.4]{fd_bdl-1}, etc.). 

The category $\tensor*[]{\cat{Bun}}{_X}$ of (H) Banach bundles over $X$ has as morphisms continuous maps intertwining the projections to the base space $X$ and restricting fiber-wise to linear {\it contractions} (i.e. norm $\le 1$; as in, say, \cite[\S 1.2]{hk_shv-bdl}); additional decorations indicate variants: $\tensor*[^{(F)}]{\cat{Bun}}{_X}$ is the category of continuous bundles, $\tensor*[^{(F)}]{\cat{Bun}}{_X^{\infty}}$ allows maps that are fiber-wise linear but not necessarily contractive, and so forth. 

\subsection{Modules}\label{subse:prel:mod}

{\it Banach modules} over a Banach algebra $A$ (mostly $C(X)$ for compact Hausdorff $X$) are as in \cite[\S 2]{dg_ban-bdl} (and \cite[\S III.1.6]{clm_ban-mod}, \cite[Definition 2.6.1 plus assumption (2.6.1)]{dales_autocont}, \cite[\S 2.1]{hk_shv-bdl} etc.): the multiplication map
\begin{equation*}
  \text{{\it projective} tensor product \cite[\S II.1.6]{clm_ban-mod}}
  =:
  A\wo E
  \ni
  a\otimes v
  \xmapsto{\quad}
  av
  \in
  E
\end{equation*}
(for left modules, say) is contractive (weakly, i.e. of norm $\le 1$). We write $\cat{Ban}$ for the category of Banach spaces and (linear) contractions (the $\cat{Ban}_1$ of \cite[\S I.1.2]{clm_ban-mod}) and similarly, $\tensor*[_A]{\cat{Ban}}{}$ is the category of {\it non-degenerate} (or {\it essential} \cite[Definition III.1.17]{clm_ban-mod}, \cite[Definition 2.6.1]{dales_autocont}) left Banach $A$-modules with contractions as morphisms: those for which
\begin{equation*}
  \overline{\spn\left\{av\ |\ a\in A,\ v\in E\right\}}
  \overline{AE}
  =
  E
\end{equation*}
The notation replicates obviously to right modules. This is equivalent \cite[discussion post Definition 2.6.1]{dales_autocont} to the module being unital when $A$ is (as it mostly will). 

An additional right-hand `$\infty$' superscript indicates we consider arbitrary linear continuous morphisms: $\tensor*[]{\cat{Ban}}{^{\infty}}$ (the $\cat{Ban}_{\infty}$ of \cite[\S I.1.1]{clm_ban-mod}), $\tensor*[_A]{\cat{Ban}}{^{\infty}}$, $\tensor*[]{\cat{Ban}}{_A^{\infty}}$, etc. These are less well-behaved than the corresponding contractive versions as plain categories, but are on the other hand {\it enriched} over $\cat{Ban}$ (or {\it $\cat{Ban}$-categories} \cite[\S 1.2]{kly}, \cite[Definition 6.2.1]{brcx_hndbk-2}, etc.): spaces of morphisms are themselves Banach spaces and identities and compositions are morphisms in $\cat{Ban}$. Categories of plain unital (left) modules will be denoted by $\tensor*[_A]{\cat{Mod}}{}$ instead, and similarly for right modules and bimodules ($\tensor[_A]{\cat{Ban}}{_B}$, $\tensor[_A]{\cat{Mod}}{_B}$ and so on). 

\begin{remarks}\label{res:getbackban}
  \begin{enumerate}[(1),wide]
  \item The monoidal unit ${\bf 1}_{\cat{Ban}}$ of $\cat{Ban}$ is $\bC$, so the underlying set $\cat{Ban}\left({\bf 1}_{\cat{Ban},E}\right)$ of a Banach space is its unit ball. For that reason, one can recover the plain $\tensor*[_A]{\cat{Ban}}{}$ as the ordinary category {\it underlying} \cite[Corollary 6.4.4]{brcx_hndbk-2} the $\cat{Ban}$-category $\tensor*[_A]{\cat{Ban}}{^{\infty}}$.

  \item Suppose the Banach algebra $A$ has a {\it (left) bounded approximate unit} in the sense of \cite[Definition 28.51]{hr-2}: a net
    \begin{equation*}
      (u_\lambda)_{\lambda}
      ,\quad
      \left\{\|u_{\lambda}\|\right\}_{\lambda}
      \text{ bounded}
      ,\quad
      u_{\lambda}a
      \xrightarrow[\quad\lambda\quad]{}
      a
      ,\quad
      \forall a\in A.
    \end{equation*}
    By {\it Cohen factorization} (\cite[Theorem 32.22]{hr-2} or \cite[Theorem III.1.16]{clm_ban-mod}) $A$ is an {\it idempotent ring} \cite[\S 1, $1^{st}$ paragraph]{qln_nonunit}: $A^2=A$. There is then a well-behaved category of {\it firm} left $A$-modules, i.e. \cite[Definition 2.3]{qln_nonunit} those for which the canonical map
    \begin{equation}\label{eq:aam2m}
      A\otimes_AM
      \xrightarrow{\quad}
      M
    \end{equation}
    is an isomorphism. That would be the reasonable purely-algebraic analogue of the category $\tensor*[_A]{\cat{Ban}}{}$ of non-degenerate Banach $A$-modules. Indeed, \cite[Proposition II.3.13]{hlmsk_homolog} (or \cite[Theorem III.3.10]{clm_ban-mod}) shows that for Banach modules non-degeneracy is equivalent to the analytic counterpart to \Cref{eq:aam2m}: the requirement that the canonical map $A\wo_AM\to M$ be an isomorphism. 
  \end{enumerate}
\end{remarks}

\section{Finitely-generated section spaces and local triviality}\label{se:fgloctriv}

\Cref{th:loctrivfg} is meant to address \cite[Problem 3.12]{gog_top-fg}. In the proof we will make frequent use of the following notation for a Banach bundle $\cE\xrightarrowdbl{}X$:
\begin{equation*}
  X_{\text{numerical condition}}
  :=
  \left\{x\in X\ |\ \dim\cE_x\text{ meet said condition}\right\}.
\end{equation*}
Examples include $X_d$, $d\in \bZ_{\ge 0}$ for the locus where the fibers have dimension precisely $d$, $X_{>d}$ for strictly larger dimensions, etc. We occasionally decorate the symbol with that of the bundle as well, for clarity, always in self-explanatory fashion: $X_{d}$ might be $X_{\cE=d}$, $X_{>d}=X_{\cE>d}$, and so on.

For continuous $\cE\xrightarrowdbl{}X$ The {\it strata} $X_d$ are in any case {\it locally closed} (open in their closure, or intersections of closed and open sets \cite[\S I.3.3, Definition 2 and Proposition 5]{bourb_top_en_1}) because $X_{>d}$ are open (countable unions of closed sets). 

A few attributes of interest that Banach bundles may or may not have are as follows (see e.g. \cite[discussion surrounding Proposition 2.1]{gog_top-fg} for a similar quick convenient recollection).

\begin{definition}\label{def:bdleattrs}
  A bundle $\cE\xrightarrowdbl{} X$ over locally compact Hausdorff $X$ is  
  \begin{itemize}[wide]
  \item {\it homogeneous} \cite[Introduction]{dupre_hilbund-1} if its fibers all have the same constant dimension;

  \item {\it subhomogeneous} if there is a finite upper bound on their fiber dimensions $\dim\cE_x$;

  \item {\it (locally) trivial} (\cite[Definition 17.1]{gierz_bdls}, \cite[Introduction]{dupre_hilbund-1}) if $\cE\cong X\times \left(\text{some Banach space }E\right)$ (respectively, such isomorphisms hold over open patches covering $X$);    

  \item {\it conditionally} (locally) trivial, (always under the assumption of subhomogeneity) if the restrictions of $\cE$ to the {\it strata}
    \begin{equation}\label{eq:strata}
      X_{d_i}
      ,\quad
      d_0<\cdots<d_{k-1} 
    \end{equation}
    are (locally) trivial;

  \item {\it of finite type (f.t.)} \cite[Definition 3.5.7]{hus_fib} if it is trivialized by a finite open cover;

  \item {\it conditionally of finite type (f.t.)} (always assuming subhomogeneity) if the restrictions to the strata are such;

    
  \item {\it (topologically) finitely-generated} ({\it f.g.} for short) if $\Gamma_0(\cE)$ is so as a Banach module over $C_0(X)$ (continuous functions on $X$ vanishing at infinity). 
  \end{itemize}
\end{definition}

Recalling \cite[\S 40, Exercise 2]{mnk} that the {\it $G_{\delta}$ subsets} of a topological space are the countable intersections of open sets, \cite[Theorem 1.11]{2405.14518v1} shows that for (F) bundles over compact Hausdorff bases $X$
\begin{equation*}
  \text{topological finite generation}
  \iff
  \text{aggregate}
  \begin{cases}
    \text{subhomogeneity}\\
    \text{conditional f.t.}\\
    \text{$X_{\le d}$ are $G_{\delta}$}
  \end{cases}
\end{equation*}

As another helpful piece of notation, when discussing a problem $\cP$, say, for subhomogeneous Banach bundles, we write
\begin{equation*}
  \cP_{\bf d}
  ,\quad
  {\bf d} = (d_0,\cdots,d_k) =  (d_0<\cdots<d_{k-1})
\end{equation*}
for instances of that problem for bundles whose strata carry precisely the fiber dimensions listed by the tuple ${\bf d}$. 

\begin{remark}\label{re:ballnot}
  The reader should not confuse symbols such as $X_{\le d}$ with the similar ones we use for closed or open balls in Banach spaces: $E_{\le r}$ and $E_{<r}$ are the closed and open balls of radius $r$ respectively in the Banach space $E$. 
\end{remark}

\begin{theorem}\label{th:loctrivfg}
  For an (F) Banach bundle $\cE\xrightarrowdbl{}X$ over a compact Hausdorff space the following conditions are equivalent.
  \begin{enumerate}[(a)]
    
  \item\label{item:th:loctrivfg:loctriv} $\cE$ is locally trivial of finite rank.

  \item\label{item:th:loctrivfg:pfg} The Banach $C(X)$-module $\Gamma(\cE)$ of global sections is algebraically finitely-generated and projective.
    
  \item\label{item:th:loctrivfg:fg} $\Gamma(\cE)$ is algebraically finitely-generated.

  \end{enumerate}  
\end{theorem}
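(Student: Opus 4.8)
The plan is to close the cycle (a)$\Rightarrow$(b)$\Rightarrow$(c)$\Rightarrow$(a). The implication (a)$\Rightarrow$(b) is the Serre--Swan theorem \cite[Theorem I.6.18]{kar_k}, and (b)$\Rightarrow$(c) is the triviality that a projective finitely-generated module is in particular finitely-generated. All the content is in (c)$\Rightarrow$(a). First I would record that algebraic finite generation forces topological finite generation (the algebraic $C(X)$-span, being everything, is a fortiori dense), so \cite[Theorem 1.11]{2405.14518v1} already delivers subhomogeneity, conditional finite type, and the $G_{\delta}$ property of the $X_{\le d}$; in particular each restriction $\cE|_{X_d}$ is locally trivial. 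The whole problem thus collapses to upgrading \emph{conditional} local triviality to genuine local triviality, for which it suffices to show that the finitely many strata $X_d$ are \emph{clopen}, i.e. that the integer-valued fiber-dimension function $x\mapsto\dim\cE_x$ is continuous. Continuity of the norm makes linearly independent sections stay independent, so this function is automatically lower semicontinuous; the task is the matching upper semicontinuity.

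The device I would use to access upper semicontinuity is to compare the \emph{geometric} fiber $\cE_x=\Gamma(\cE)/I_x$, where $I_x:=\{s\in\Gamma(\cE)\ |\ s(x)=0\}$, with the \emph{algebraic} fiber $\Gamma(\cE)/\fm_x\Gamma(\cE)$ ($\fm_x\subset C(X)$ the maximal ideal at $x$). There is an evident surjection $\Gamma(\cE)/\fm_x\Gamma(\cE)\twoheadrightarrow\cE_x$ coming from $\fm_x\Gamma(\cE)\subseteq I_x$, and the key claim is that it is an isomorphism, i.e. $\fm_x\Gamma(\cE)=I_x$. This is where I expect the main subtlety to lie, and the clean way through is \emph{Cohen factorization}: the ideal $\fm_x\cong C_0(X\setminus\{x\})$ is a $C^*$-algebra, hence carries a bounded approximate identity, and a routine estimate (near $x$ the norm $\|s(\cdot)\|$ is small because $s(x)=0$) shows that $I_x$ is a non-degenerate $\fm_x$-module. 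Then \cite[Theorem 32.22]{hr-2} gives $\fm_x I_x=I_x$ \emph{without closure}, whence $I_x=\fm_x I_x\subseteq\fm_x\Gamma(\cE)\subseteq I_x$ and the desired equality. Notably this step needs no finiteness hypothesis; it is the functional-analytic heart of the matter, and the reason continuity (not merely upper semicontinuity) of the norm is used.

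With the identification $\dim\cE_x=\dim_{\bC}\Gamma(\cE)/\fm_x\Gamma(\cE)$ in hand, upper semicontinuity becomes a purely algebraic statement about a finitely-generated module over the commutative ring $C(X)$, and here is exactly where algebraic (as opposed to merely topological) generation is indispensable---the failure for the non-finitely-generated module $\fm_0\subseteq C[0,1]$ being the prototypical obstruction. I would run the standard Nakayama-plus-clearing-denominators argument: lift a basis of $\Gamma(\cE)/\fm_x\Gamma(\cE)$ to finitely many generators, apply Nakayama to the localization $C(X)_{\fm_x}$ to produce $g$ with $g(x)\ne 0$ annihilating the relevant cokernel, and conclude that the same generators suffice throughout the open set $\{g\ne 0\}$; thus $\{x\ |\ \dim\cE_x\le r\}$ is open. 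Combined with the lower semicontinuity above, $\dim\cE_x$ is locally constant, so each stratum is clopen. Finally, writing $X$ as the finite disjoint union of its clopen strata and invoking the conditional finite type of \cite[Theorem 1.11]{2405.14518v1} on each, $\cE$ is locally trivial of finite rank, which is (a).
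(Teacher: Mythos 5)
Your proof is correct, but for the substantive implication (c)$\Rightarrow$(a) it takes a genuinely different route from the paper's. The paper proceeds by successive reductions: a finite closed cover over whose members $\cE$ is conditionally trivial (\Cref{le:fincov}), induction on the number of strata, and repeated passage to quotients by trivial line subbundles --- which is what forces the paper to first establish that such quotients remain (F) (\Cref{th:ballsvietcont}, \Cref{cor:quotbyloctriv}) --- until only the fiber dimensions $0$ and $1$ survive and $\Gamma(\cE)\cong C_0(U)$; the endgame is then that $C_0(U)$ is a Hilbert $C(X)$-module, so its algebraic finite generation gives projectivity by \cite[Corollary 15.4.8]{wo} and local triviality by Serre--Swan. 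You dispense with all of that machinery: your Cohen-factorization identification $I_x=\fm_x I_x=\fm_x\Gamma(\cE)$, hence $\cE_x\cong\Gamma(\cE)/\fm_x\Gamma(\cE)$, is sound (upper semicontinuity of the norm makes $I_x$ a non-degenerate Banach module over $\fm_x\cong C_0(X\setminus\{x\})$, and \cite[Theorem 32.22]{hr-2} removes the closure), and once geometric fibers are identified with algebraic fibers, the Nakayama-plus-clearing-denominators argument does give upper semicontinuity of $x\mapsto\dim\cE_x$ for the finitely generated module $\Gamma(\cE)$; together with the lower semicontinuity valid for (F) bundles, the strata are clopen, and conditional finite type from \cite[Theorem 1.11]{2405.14518v1} finishes. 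Your route is shorter and more self-contained --- no quotient-bundle continuity theory, no imported $C^*$-module projectivity --- and it isolates cleanly where each hypothesis enters (algebraic generation feeds Nakayama; norm continuity feeds lower semicontinuity). The paper's longer route has the side benefit that its auxiliary results (\Cref{th:ballsvietcont}, \Cref{le:fincov}, \Cref{cor:le:fincov:fincov}) are developed for independent use elsewhere in the paper; note also that both arguments deduce (b) from (a) via Swan's theorem \cite[Theorem 2]{zbMATH03179258} rather than directly from (c).

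One misstatement in your commentary, though it does not affect the proof: you claim the Cohen-factorization step is ``the reason continuity (not merely upper semicontinuity) of the norm is used.'' That step in fact needs only upper semicontinuity and works verbatim for (H) bundles. Full continuity of the norm is needed exactly where you first invoked it, for \emph{lower} semicontinuity of the fiber dimension; this is precisely what fails in \Cref{res:quotnotf}\Cref{item:res:quotnotf:quotnotf}, whose quotient bundle has section module $\cong\bC$ (certainly finitely generated as a $C(X)$-module) yet is not locally trivial --- a useful check that your argument uses the (F) hypothesis only at that point.
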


It will be convenient to simplify the bundles under consideration by passing to {\it quotient bundles} \cite[\S 2.4]{kqw_rieff} thereof. Recall first the notion of a {\it subbundle} $\cF\le \cE$ (\cite[\S II.17, Exercise 41]{fd_bdl-1} for (F) and \cite[Definition 2.9]{kqw_rieff} for the broader class of (H) bundles): a collection of closed subspaces $\cF_x\le \cE_x$ for $x\in X$ which aggregate into an (H) Banach bundle in its own right with the topology projection map to $X$ inherited from $\cE$. The disjoint union
\begin{equation*}
  \cE/\cF
  :=
  \coprod_{x\in X}\cE_x/\cF_x,
\end{equation*}
with its quotient topology, has a natural (H) bundle structure \cite[Proposition 2.16]{kqw_rieff} (see also \cite[\S 1]{laz_bb-sel}).

Over compact Hausdorff spaces (H) Banach bundles admit \cite[Scholium 6.7]{hk_shv-bdl} that  a number of mutually equivalent sheaf-theoretic interpretations:
\begin{itemize}[wide]
\item as sheaves of Banach modules over the sheaf $\cC=\cC(X)$ of bounded continuous functions on $X$;

\item as sheaves of Banach $C(X)$-modules satisfying an additional constraint of being {\it well-supported} \cite[Definition 4.2]{hk_shv-bdl};

\item as the {\it Banach sheaves} of \cite[\S 3]{MR0560785} (or {\it approximation sheaves} of \cite[\S 3.5]{hk_shv-bdl}). 
\end{itemize}
As a sheaf, the quotient bundle just recalled is nothing but (the bundle corresponding to) the quotient sheaf in the familiar sense (\cite[Definition I.2.4]{bred_shf_2e_1997}, \cite[\S II.1]{hrt}, etc.).

The theory in those sources does not {\it quite} apply directly; it concerns sheaves valued in the category $\cat{Set}$ of sets rather than $\cat{Ban}$, which does make a difference: stalks (fibers) $\cE_x$ are computed as colimits in the two respective categories, the $\cat{Set}$-analogue $\cE\xrightarrow{\pi}X$ of a Banach bundle map ($\cE$ being the {\it \'etal\'e space} \cite[Exercise II.1.13]{hrt} of the sheaf) is a local homeomorphism, etc. Much of the theory, though, replicates fairly straightforwardly. 

\begin{lemma}\label{le:quotshvs}
  Let $\cF\le \cE$ be an embedding of Banach bundles over a compact Hausdorff space $X$.

  The sheaf associated to the quotient $\cE/\cF$ is the Banach sheaf $\cat{BanSh}(\cat{qpsh}_{\cE/\cF})$ attached to the presheaf
  \begin{equation*}
    \left(\text{open }U\subseteq X\right)
    \xmapsto{\quad\cat{qpsh}_{\cE/\cF}\text{ (for `quotient presheaf')}\quad}
    \Gamma_b(\cE|_U)/\Gamma_b(\cF|_U)
  \end{equation*}
  as the latter's image through the left adjoint $\cat{BanSh}$ to the inclusion functor
  \begin{equation*}
    \cat{BanSh}_X:=\text{Banach sheaves on $X$}
    \lhook\joinrel\xrightarrow{\quad}
    \cat{BanPSh}_X:=\text{Banach-space presheaves on $X$}.
  \end{equation*}
\end{lemma}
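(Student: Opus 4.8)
The plan is to realize the sheaf of the quotient bundle, namely $U\mapsto\Gamma_b((\cE/\cF)|_U)$, as the sheafification of $\cat{qpsh}_{\cE/\cF}$ by producing a comparison morphism and checking that it is a stalkwise isomorphism. Throughout I would lean on the bundle-to-sheaf dictionary of \cite[Scholium 6.7]{hk_shv-bdl}: an (H) bundle $\cE$ corresponds to the Banach sheaf $U\mapsto\Gamma_b(\cE|_U)$, and—crucially—the fiber $\cE_x$ is recovered as the stalk at $x$, i.e. the colimit $\mathrm{colim}_{U\ni x}\Gamma_b(\cE|_U)$ computed in $\cat{Ban}$. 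Note that the target presheaf $U\mapsto\Gamma_b((\cE/\cF)|_U)$ is already a sheaf, since $\cE/\cF$ is again an (H) bundle by \cite[Proposition 2.16]{kqw_rieff}.

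First I would build the comparison morphism. The fiber-wise quotient map $\cE\to\cE/\cF$ is a morphism of (H) bundles, so post-composition sends a bounded section $s\in\Gamma_b(\cE|_U)$ to the bounded section $x\mapsto s(x)+\cF_x$ of $(\cE/\cF)|_U$; this is contractive because the quotient norm is dominated by the ambient one. A section lies in the kernel precisely when it takes values in $\cF$, i.e. exactly when it belongs to $\Gamma_b(\cF|_U)$ (a closed subspace of $\Gamma_b(\cE|_U)$, since each $\cF_x$ is closed and the relevant norm is the sup-norm). Hence the assignment descends to a morphism of presheaves $\cat{qpsh}_{\cE/\cF}\to\bigl(U\mapsto\Gamma_b((\cE/\cF)|_U)\bigr)$, visibly compatible with restriction. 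As the target is a sheaf, the universal property of the left adjoint $\cat{BanSh}$ yields a unique factorization $\phi\colon\cat{BanSh}(\cat{qpsh}_{\cE/\cF})\to\bigl(U\mapsto\Gamma_b((\cE/\cF)|_U)\bigr)$ through the sheafification unit.

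It then remains to see that $\phi$ is an isomorphism, which I would verify on stalks, using that a morphism of Banach sheaves inducing isomorphisms on all stalks is invertible. Sheafification preserves stalks, so the stalk of the source at $x$ is $\mathrm{colim}_{U\ni x}\bigl(\Gamma_b(\cE|_U)/\Gamma_b(\cF|_U)\bigr)$. The point is that for each $U$ the quotient is a genuine cokernel in $\cat{Ban}$ (quotient by a closed subspace), and cokernels commute with the filtered colimit defining the stalk, both being colimits. Combined with the dictionary's identifications $\mathrm{colim}_U\Gamma_b(\cE|_U)=\cE_x$ and $\mathrm{colim}_U\Gamma_b(\cF|_U)=\cF_x$, this gives $\mathrm{colim}_U\bigl(\Gamma_b(\cE|_U)/\Gamma_b(\cF|_U)\bigr)=\cE_x/\cF_x$, which is by construction the fiber $(\cE/\cF)_x$, i.e. the stalk of the target. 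Tracing definitions, $\phi$ induces on stalks the canonical map $\cE_x/\cF_x\to\cE_x/\cF_x$ sending the class of $s$ to $s(x)+\cF_x$, which is the identity; so $\phi$ is a stalkwise isomorphism and therefore an isomorphism of Banach sheaves.

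The main obstacle I anticipate lies in the stalk computation in the enriched ($\cat{Ban}$-valued) setting: one must be certain that the stalk of a Banach (pre)sheaf is honestly the $\cat{Ban}$-colimit over shrinking neighborhoods—so that the formal ``colimits commute with colimits'' argument genuinely applies to the quotient—and that the quotient norm on each $\Gamma_b(\cE|_U)/\Gamma_b(\cF|_U)$ passes correctly to the colimit so as to match the fiber norm on $\cE_x/\cF_x$. This rests on the continuity of the norm and on the identification of fibers with stalks in \cite[Scholium 6.7]{hk_shv-bdl}; once those ingredients are secured, the commutation is purely formal.
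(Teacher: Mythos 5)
Your proposal is correct in substance but takes a genuinely different route from the paper's. The paper disposes of the lemma by pure representability: by the universal property of the fiberwise-quotient-plus-quotient-topology construction, $\cat{Bun}_X(\cE/\cF,\cG)\cong\left\{\varphi\in\cat{Bun}_X(\cE,\cG)\ |\ \varphi|_{\cF}\equiv 0\right\}$ functorially in $\cG$, while the sheafification adjunction together with the objectwise universal property of Banach-space quotients identifies that same functor with $\cat{BanSh}_X(\cat{BanSh}(\cat{qpsh}_{\cE/\cF}),\cG)$; Yoneda then finishes, with no mention of stalks at all. You instead build the canonical comparison morphism and verify it stalkwise. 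That route is viable here, and it even yields a by-product the paper's proof leaves implicit --- the explicit computation of the stalks of $\cat{BanSh}(\cat{qpsh}_{\cE/\cF})$ as $\cE_x/\cF_x$ via interchange of colimits, which is sound since $\cat{Ban}$ is cocomplete, each $\Gamma_b(\cF|_U)$ is sup-norm closed in $\Gamma_b(\cE|_U)$, and upper semicontinuity of the norm makes the germ seminorm of a section $s$ at $x$ equal to $\|s(x)\|$. But it consumes two enriched-setting facts that you assert rather than establish, and these are precisely where the work migrates once one leaves $\cat{Set}$-valued sheaf theory. First, that $\cat{Ban}$-sheafification preserves stalks: this is formal via the adjunction between the stalk functor and the skyscraper presheaf, but one must check that the skyscraper is itself a Banach sheaf (it is: it is the section sheaf of the quotient of a trivial bundle by the subbundle vanishing only at $x$, as in \Cref{res:quotnotf}\Cref{item:res:quotnotf:quotnotf}). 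Second, and more seriously, that a morphism of Banach sheaves inducing isomorphisms on all stalks is invertible: this is a quotable triviality for sheaves of sets or abelian groups, but not in the $\cat{Ban}$-enriched setting. It is true over compact Hausdorff bases --- through the dictionary it amounts to the claim that a bundle morphism restricting to an isometric isomorphism on every fiber is a bundle isomorphism, which one proves using the existence of enough sections and the tube description of convergence ($p_\lambda\to p$ iff $\pi(p_\lambda)\to\pi(p)$ and $\|p_\lambda-\sigma(\pi(p_\lambda))\|\to 0$ for a section $\sigma$ through $p$) --- but it requires this argument or an equivalent citation. With those two points supplied your proof is complete; the paper's Yoneda argument is shorter precisely because it sidesteps both.
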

\begin{proof}
  This will be immediate upon unpacking the requisite universal properties in the two settings (bundles and sheaves). By the very definition of $\cQ:=\cE/\cF$ as the disjoint union of quotients $\cE_x/\cF_x$, $x\in X$ equipped with the quotient topology,
  \begin{equation*}
    \cat{Bun}_X(\cQ,\cG)
    \cong
    \left\{\varphi\in \cat{Bun}_X(\cE,\cG)\ |\ \varphi|_{\cF}\equiv 0\right\}
  \end{equation*}
  functorially in $\cG\in \cat{Bun}_X$. Recasting bundles as sheaves, the latter space is also
  \begin{equation*}
    \begin{aligned}
      \left\{\varphi\in \cat{BanSh}_X(\cE,\cG)\ |\ \varphi|_{\cF}\equiv 0\right\}
      &\cong
        \cat{BanPSh}_X(\cat{qpsh}_{\cE/\cF},\cG)\\
      &\cong
        \cat{BanSh}_X(\cat{BanSh}(\cat{qpsh}_{\cE/\cF}),\cG).
    \end{aligned}
  \end{equation*}
  The two objects we wish to identify,
  \begin{equation*}
    \cQ\in \cat{Bun}_X
    \quad
    \simeq
    \quad
    \cat{BanSh}_X
    \ni
    \cat{BanSh}(\cat{qpsh}_{\cE/\cF}),
  \end{equation*}
  thus represent isomorphic functors and must themselves be isomorphic. 
\end{proof}

\begin{remarks}\label{res:quotnotf}
  Assume the base space $X$ compact Hausdorff throughout. 
  \begin{enumerate}[(1),wide]
    
  \item\label{item:res:quotnotf:quotnotf} Quotient bundles $\cE/\cF$ certainly need not be (F), even when $\cE$ and $\cF$ both are: the quotient $\cQ$ of the trivial rank-1 bundle on $X$ by the bundle corresponding to the ideal $C_0(X\setminus\{x\})$ has rank 1 at $x$ and 0 elsewhere, and hence will be (F) precisely when $x\in X$ is isolated (i.e. when $\{x\}=X_{\cQ>0}$ is open). \cite[Example 5.5]{laz_bb-sel} is an instance of this observation (with $X$ consisting of a convergent sequence together with its limit $x$). 
    
  \item\label{item:res:quotnotf:sectquot} The surjection $\cE\xrightarrowdbl{} \cE/\cF$ always induces one at the level of sections: $\Gamma(\cE)\xrightarrowdbl{}\Gamma(\cE/\cF)$. This is a consequence of \cite[Theorem 3.2]{laz_bb-sel}, itself an application of the (Michael-style \cite[Theorem 3.2'']{mich_contsel-1}, \cite[Theorem 1.1]{zbMATH06329568}) {\it selection theorem} \cite[Theorem 2.9]{laz_bb-sel}. The hypothesis requires the openness of the map $\cE\to \cE/\cF$, which does indeed obtain (\cite[\S 9.4]{gierz_bdls} or \cite[Proposition 2.16]{kqw_rieff}).
   
  \item\label{item:res:quotnotf:nocohom} Alternatively, the surjectivity of $\Gamma(\cE)\xrightarrowdbl{} \Gamma(\cE/\cF)$ has a sheaf-theoretic interpretation: sheaves corresponding to (H) Banach bundles are, by \cite[p.15, pre Proposition 1.3]{dg_ban-bdl} (also \cite[Lemma 3.3]{zbMATH03663842}), {\it soft} in the sense of \cite[Definition II.9.1]{bred_shf_2e_1997}: sections over closed subsets extend globally. It follows \cite[Theorem II.9.9]{bred_shf_2e_1997} that applying the global-section functor to
    \begin{equation*}
      0\to
      \cF
      \lhook\joinrel\xrightarrow{\quad}
      \cE
      \xrightarrowdbl{\quad}
      \cE/\cF
      \to 0
    \end{equation*}
    interpreted as a short exact sequence of sheaves produces a short exact sequence again.

    The caveat preceding \Cref{le:quotshvs} applies: \cite[Theorem II.9.9]{bred_shf_2e_1997} requires slight adjustments to the $\cat{Ban}$ (rather than $\cat{Set}$) setup. The proof transports verbatim so long as {\it supports} \cite[Definition I.1.10]{bred_shf_2e_1997} of sections $\sigma\in \Gamma(\cE)$ are defined as
    \begin{equation*}
      |\sigma|
      :=
      \text{closure }\ol{\left\{x\in X\ |\ \sigma(x)\ne 0\right\}}
      \subseteq X.
    \end{equation*}
    For sheaves valued in (discrete) abelian groups taking the closure is redundant, for the non-zero locus is automatically closed.
  \end{enumerate}  
\end{remarks}

The quotient in \Cref{res:quotnotf}\Cref{item:res:quotnotf:quotnotf} fails to be (F) for very simple numerical reasons: for (F) Banach bundles the fiber-dimension function
\begin{equation}\label{eq:dimfn}
  X\ni x
  \xmapsto{\quad}
  \dim\cE_x
  \in \bZ_{\ge 0} 
\end{equation}
is lower semicontinuous. That condition fails in said example, but even it is no guarantee that quotients of (F) bundles are (F). 


\begin{example}\label{ex:semicontstillnotf}
  Fix
  \begin{itemize}[wide]
  \item a point $x\in X$ in a compact Hausdorff space;
  \item a direct-sum decomposition $E=F\oplus F'$ of finite-dimensional Banach spaces (so that in particular $\dim E-\dim F=\dim F'$);
  \item and a vector $v\in F'$ whose distance from $F$ is strictly smaller than $\|v\|$. 
  \end{itemize}
  Take for $\cE$ the bundle corresponding to the $C(X)$-module
  \begin{equation*}
    \Gamma(\cE)
    :=
    \left\{X\xrightarrow[\text{continuous}]{f}E\ |\ f(x)\in F'\right\}
  \end{equation*}
  (a subbundle of the trivial $X\times E$) and similarly,
  \begin{equation*}
    \Gamma(\cF)
    :=
    \left\{X\xrightarrow[\text{continuous}]{f}F\ |\ f(x)=0\right\}.
  \end{equation*}
  The quotient $\cE/\cF$ has constant fiber dimension $\dim E-\dim F=\dim F'$, but the ``constant'' section taking the value $v\in F'$ in every fiber has discontinuous norm at the highlighted point $x$: taking the value $\|v\|$ there, but $d(v,F)<\|v\|$ elsewhere. 
\end{example}

\begin{remark}\label{re:quott2notcont}
  \Cref{ex:semicontstillnotf} will also serve the purpose of answering \cite[Question 5.6]{laz_bb-sel} in the negative: the quotient bundle $\cQ:=\cE/\cF$ produced there is (as observed) not continuous, but its total space is nevertheless Hausdorff. To see this, simply note the following alternative description of $\cQ$: its total space is (topologically and linearly along each fiber) simply $X\times F'$, with the fibers $\cQ_{x'}\cong F'$ equipped with
  \begin{itemize}[wide]
  \item the original norm on $F'$ regarded as a subspace of $E$ at $x'=x$;
  \item and the equivalent but different norm pulled back through
    \begin{equation*}
      \begin{tikzpicture}[>=stealth,auto,baseline=(current  bounding  box.center)]
        \path[anchor=base] 
        (0,0) node (l) {$F'$}
        +(2,.5) node (u) {$E$}
        +(4,0) node (r) {$E/F$}
        ;
        \draw[right hook->] (l) to[bend left=6] node[pos=.5,auto] {$\scriptstyle $} (u);
        \draw[->>] (u) to[bend left=6] node[pos=.5,auto] {$\scriptstyle $} (r);
        \draw[->] (l) to[bend right=6] node[pos=.5,auto,swap] {$\scriptstyle \cong$} (r);
      \end{tikzpicture}
    \end{equation*}
    elsewhere. 
  \end{itemize}
\end{remark}


\Cref{th:ballsvietcont} below is very much in the spirit of \cite[Proposition 5.7]{laz_bb-sel}, also providing sufficient conditions for the continuity of the norm of a quotient Banach bundle. That statement does not apply directly, as it concerns {\it locally uniform} Banach bundles in the sense of \cite[duscission post Proposition 5.2]{laz_bb-sel}: those obtained, locally on the base $X$, by setting
\begin{equation*}
  \cE_x:=\psi(x)
  ,\quad
  \text{open $U\subseteq X$}
  \xrightarrow{\quad\psi\quad}
  \left(\text{closed subspaces of a Banach space $E$}\right)
\end{equation*}
with $\psi$ having the property that
\begin{equation*}
  X\ni x
  \xmapsto{\quad}
  \left(\text{unit ball of $\psi(x)$}\right)
\end{equation*}
is continuous for the {\it Hausdorff metric} \cite[Definition 7.3.1]{bbi} on the space of bounded closed subsets of $E$:
\begin{equation*}
  \begin{aligned}
    d(A,B)
    &:= \max\left(d(a,B),\ d(b,A)\ |\ a\in A,\ b\in B\right)\\
    d(a,B)
    &:= \inf\left\{d(a,b)\ |\ b\in B\right\}.
  \end{aligned}  
\end{equation*}
To state a version of that result appropriate outside the scope of locally uniform bundles, we need to recall some background on topologies on classes of subsets of a topological space. Specifically, following \cite[\S 1.1]{ct_vietoris} or \cite[\S 1.3]{kt_corresp}:

\begin{itemize}[wide]
  
\item The {\it lower Vietoris topology} $\cT_{V}^{-}$ on a class $\cC$ of subsets (typically closed) of a topological space $X$ has
  \begin{equation*}
    \left\{A\in \cC\ |\ A\cap U\ne \emptyset\right\}
    ,\quad
    U\subseteq X\text{ open}
  \end{equation*}
  as a subbase of open sets.
  
\item The {\it upper Vietoris topology} $\cT_{V}^{+}$ has
  \begin{equation*}
    \left\{A\in \cC\ |\ A\subset U\right\}
    ,\quad
    U\subseteq X\text{ open}
  \end{equation*}
  as a (sub)base of open sets.

\item The {\it (plain) Vietoris topology} $\cT_{V}$ is the supremum of $\cT_V^{\pm}$. 
\end{itemize}

\begin{theorem}\label{th:ballsvietcont}
  Let $\cF\le \cE$ be an embedding of (F) Banach bundles over compact Hausdorff $X$. The quotient $\cE/\cF$ is then again (F) provided the map
  \begin{equation}\label{eq:ballmap}
    X\ni x
    \xmapsto{\quad}
    \left(\text{closed $r$-ball }(\cF_x)_{\le r}\right)
    \in
    \mathrm{Cl}(\cE)
    :=
    \left\{\text{closed subsets of }\cE\right\}
  \end{equation}
  is upper-Vietoris continuous for some or equivalently all $r>0$.
\end{theorem}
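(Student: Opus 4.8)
The plan is to upgrade the (automatically upper semicontinuous) norm of the (H) bundle $\cE/\cF$ to a continuous one by proving it is also lower semicontinuous. Since the quotient map $q\colon\cE\to\cE/\cF$ is a continuous open surjection (\Cref{res:quotnotf}), it is a topological quotient map, so a function on $\cE/\cF$ is lower semicontinuous exactly when its pullback along $q$ is. The pullback of the fiber-wise quotient norm is
\begin{equation*}
  N\colon\cE\to\bR,\qquad N(s):=d\bigl(s,\cF_{\pi(s)}\bigr)=\inf_{f\in\cF_{\pi(s)}}\|s-f\|,
\end{equation*}
so the whole statement reduces to showing that $\{s\in\cE\ |\ N(s)>t\}$ is open for every $t\ge 0$.

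First I would fix $s_0$ with $N(s_0)>t$, write $x_0=\pi(s_0)$, and choose $\delta>0$ with $N(s_0)>t+3\delta$. The next step is a radius truncation: pick $r>\|s_0\|+t+2$, so that any $f\in\cF_x$ with $\|f\|>r$ is automatically $>t$ away from $s_0$ and from all nearby points (using $\|s-f\|\ge\|f\|-\|s\|$); thus the constraint $N>t$ is governed entirely by the balls $(\cF_\bullet)_{\le r}$. Since $\cE$ is (F) its norm is continuous on the total space, and by the Douady--dal Soglio-H\'erault theorem (enough sections, as in \cite{dg_ban-bdl,fd_bdl-1}) one may choose a continuous section $\sigma$ with $\sigma(x_0)=s_0$. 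Then $e\mapsto\|e-\sigma(\pi(e))\|$ is continuous, and
\begin{equation*}
  U:=\bigl\{e\in\cE\ \big|\ \|e-\sigma(\pi(e))\|>t+\delta\bigr\}
\end{equation*}
is open and contains $(\cF_{x_0})_{\le r}$, because every such $f$ satisfies $\|f-s_0\|\ge N(s_0)>t+3\delta$.

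Now the hypothesis does its work: upper-Vietoris continuity at $r$ yields a neighborhood $W\ni x_0$ with $(\cF_x)_{\le r}\subseteq U$ for all $x\in W$. For $s$ in the open neighborhood $\pi^{-1}(W)\cap\{\,\|s-\sigma(\pi(s))\|<\delta/2\,\}$ of $s_0$, every $f\in(\cF_{\pi(s)})_{\le r}$ then satisfies $\|s-f\|\ge\|\sigma(\pi(s))-f\|-\|s-\sigma(\pi(s))\|>t+\delta/2$, while every $f$ outside that ball satisfies $\|s-f\|>t$ by the radius choice; hence $N(s)>t$ throughout this neighborhood, proving $\{N>t\}$ open. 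The parenthetical ``some or equivalently all $r$'' I would settle separately and cheaply: the fiber-preserving homeomorphism $e\mapsto(r/r')\,e$ of $\cE$ carries $(\cF_x)_{\le r'}$ onto $(\cF_x)_{\le r}$ and preserves the upper Vietoris topology, so continuity for one radius transfers to every other.

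The step I expect to be the crux is precisely the avoidance of a compactness argument. The naive route—taking near-minimizers $f_\alpha\in\cF_{x_\alpha}$ along a net $s_\alpha\to s_0$ and extracting a convergent subnet—fails, since the fibers are infinite-dimensional and their closed balls are not compact. The device that sidesteps this is the reformulation via a section through $s_0$: it converts the analytic assertion ``$s_0$ is $t$-far from the moving subspace'' into the purely topological containment $(\cF_{x_0})_{\le r}\subseteq U$ for an honestly open $U\subseteq\cE$, which is exactly the datum that upper-Vietoris continuity propagates to neighboring fibers.
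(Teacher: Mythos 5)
Your proposal is correct and takes essentially the same route as the paper's proof: both hinge on choosing a section $\sigma$ of $\cE$ through the point in question, using continuity of the norm of $\cE$ to form the open set of elements lying at distance $>t$ (the paper's $V_{>K}(U\mid\sigma)$, your $U$), propagating the containment of $(\cF_{x_0})_{\le r}$ in that open set to nearby fibers via upper-Vietoris continuity, and truncating to a large ball to pass between distance-to-ball and distance-to-full-fiber. The only differences are cosmetic: the paper checks lower semicontinuity along sections of the quotient (via \cite[Theorem 2.6]{dg_ban-bdl} and the lifting $\Gamma(\cE)\twoheadrightarrow\Gamma(\cE/\cF)$) while you pull the quotient norm back along the topological quotient map and work on the total space of $\cE$; and your final neighborhood of $s_0$ should additionally be intersected with the open set $\left\{e\in\cE\ \big|\ \|e\|<\|s_0\|+2\right\}$ so that the estimate $\|s-f\|\ge\|f\|-\|s\|>t$ for $\|f\|>r$ is literally valid, a one-line fix.
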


Locally trivial finite-rank subbundles of (F) bundles plainly satisfy the hypothesis on $\cF$, hence:

\begin{corollary}\label{cor:quotbyloctriv}
  A quotient of an (F) Banach bundle over a compact Hausdorff space by a finite-rank locally trivial (hence also (F)) subbundle is again (F).  \qedhere
\end{corollary}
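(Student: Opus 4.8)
The plan is to deduce the corollary from \Cref{th:ballsvietcont} by checking that a finite-rank locally trivial subbundle $\cF\le\cE$ automatically renders the ball map \eqref{eq:ballmap} upper-Vietoris continuous. First I would recast upper-Vietoris continuity in closed-complement form: writing $b(x):=(\cF_x)_{\le r}$, the map $b$ is upper-Vietoris continuous precisely when, for every closed $Z\subseteq\cE$, the set
\begin{equation*}
  S_Z:=\left\{x\in X\ |\ (\cF_x)_{\le r}\cap Z\ne\emptyset\right\}
\end{equation*}
is closed in $X$; this is just the negation of the defining subbasic condition $b(x)\subseteq W$ with $W=\cE\setminus Z$. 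The whole task thus reduces to proving each $S_Z$ closed, which I would do by a net argument, fixing an arbitrary limit point $x_\infty$ of a net $(x_\alpha)\subseteq S_Z$.

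Next I would exploit finite rank and local triviality near $x_\infty$. Local triviality supplies an open neighborhood $U_0\ni x_\infty$ and continuous sections $\sigma_1,\dots,\sigma_n\colon U_0\to\cE$ of $\cF$ restricting to a basis of each fiber $\cF_x$, $x\in U_0$; the parenthetical ``(hence also (F))'' is automatic, the norm on $\cF$ being the restriction of the continuous norm on $\cE$. The crucial quantitative input is a \emph{uniform coercivity} estimate: because $a\mapsto\|\sum_i a_i\sigma_i(x_\infty)\|$ is a genuine norm on $\bC^n$ (the $\sigma_i(x_\infty)$ being independent), it is bounded below on the Euclidean unit sphere, and continuity of the $\sigma_i$ together with continuity of the fiber norm (this is where (F) enters) then yields $\delta>0$ and a smaller neighborhood $U_1\subseteq U_0$ with
\begin{equation*}
  \left\|\sum_i a_i\sigma_i(x)\right\|\ \ge\ \delta\,|a|
  ,\qquad x\in U_1,\ a\in\bC^n.
\end{equation*}
Consequently, for $x\in U_1$, every $r$-ball $(\cF_x)_{\le r}$ is the image of coefficient vectors confined to the single fixed compact set $C:=\{a\in\bC^n\ |\ |a|\le r/\delta\}$, uniformly in $x$.

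With this confinement in hand I would finish by a formal compactness-and-continuity passage. For $\alpha$ large enough that $x_\alpha\in U_1$, choose $p_\alpha\in(\cF_{x_\alpha})_{\le r}\cap Z$ and write $p_\alpha=\sum_i a_{\alpha,i}\,\sigma_i(x_\alpha)$ with $a_\alpha\in C$; passing to a subnet makes $a_\alpha\to a_\infty\in C$. Continuity of scalar multiplication and addition in $\cE$ then gives $p_\alpha\to p_\infty:=\sum_i a_{\infty,i}\,\sigma_i(x_\infty)\in\cF_{x_\infty}$, closedness of $Z$ gives $p_\infty\in Z$, and continuity of the norm gives $\|p_\infty\|\le r$. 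Hence $p_\infty\in(\cF_{x_\infty})_{\le r}\cap Z$, so $x_\infty\in S_Z$ and $S_Z$ is closed, as required.

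The main obstacle is the uniform coercivity estimate of the middle paragraph: it is exactly the step forcing the coefficient vectors into one compact set as $x$ varies, and it is what would fail for an infinite-rank $\cF$, or for one with merely upper-semicontinuous norm (the sphere argument losing compactness or the positive lower bound, respectively). Everything else — the reformulation and the subnet extraction — is routine once the coefficients are trapped in $C$.
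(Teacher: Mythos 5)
Your proposal is correct and takes the paper's own route: the paper deduces the corollary from \Cref{th:ballsvietcont} by remarking that finite-rank locally trivial subbundles of (F) bundles ``plainly'' satisfy the upper-Vietoris hypothesis on the ball map \Cref{eq:ballmap}, which is precisely what you verify. Your closed-set reformulation, uniform coercivity estimate, and subnet argument simply supply in full the verification the paper treats as immediate.
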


\pf{th:ballsvietcont}
\begin{th:ballsvietcont}
  The mutual equivalence of the two conditions is plain enough, as one radius $r$ will recover the others by scaling. 
  
  We denote by $\pi$ the various bundle projections to the base space $X$, relying on context to distinguish among them. One way to prove that the norm of $\cQ:=\cE/\cF$ is continuous is, by \cite[Theorem 2.6]{dg_ban-bdl} (say), to show that the function
  \begin{equation*}
    X\ni x
    \xmapsto{\quad}
    \|\sigma(x)\|_{\cQ_x}
  \end{equation*}
  is continuous at a fixed arbitrary $x_0\in X$ for a fixed but again arbitrary $\sigma\in \Gamma(\cQ)$. Furthermore, by the surjectivity of $\Gamma(\cE)\xrightarrowdbl{}\Gamma(\cQ)$ (\Cref{res:quotnotf}\Cref{item:res:quotnotf:sectquot}), we can assume that $\sigma$ is in fact a section of the ambient bundle $\cE$ (retaining the symbol slightly abusively). Finally, we are in fact only interested in the {\it lower} semicontinuity, for the upper comes for free from the general theory of (H) quotient bundles. 

  Suppose $\|\sigma(x_0)\|_{\cQ}>K\ge 0$. Extending the notation \Cref{eq:vmidmid}, the continuity of the norm of $\cE$ implies that
  \begin{equation}\label{eq:vusigma}
    V_{>K}(U\mid\sigma)
    =
    V_{>K}(U\mid\sigma\mid \sigma(x_0))
    :=
    \left\{s\in E\ |\ \pi(s)\in U\text{ and }\|\sigma(\pi(s))-s\|_{\cE_{\pi(p)}}>K\right\}
    \subset \cE
  \end{equation}
  is open for every open neighborhood $U\ni x_0$. Every ball $(\cF_{x_0})_{\le r}$ is contained in that open set by assumption, hence so are $(\cF_{x})_{\le r}$ for $x$ close to $x_0$ by upper-Vietoris continuity. Or: for every $r>0$ there is a neighborhood $V\ni x_0$ such that
  \begin{equation*}
    x\in V
    \xRightarrow{\quad}
    d(\sigma(x),(\cF_{x})_{\le r})>K.
  \end{equation*}
  For $x$ sufficiently close to $x_0$ there will be some $r$ such that
  \begin{equation*}
    d\left(x,\cF_x\right)\le K
    \xLeftrightarrow{\quad}
    d\left(x,(\cF_x)_{\le r}\right)\le K,
  \end{equation*}
  hence the conclusion.
\end{th:ballsvietcont}

\begin{remark}
  \cite[Proposition 5.7]{laz_bb-sel} does not quite follow from \Cref{th:ballsvietcont} directly, for the former's continuity assumption is, formally, weaker than the latter's: per \cite[Proposition 4.2.1(i)]{kt_corresp}, upper-Vietoris convergence entails convergence in the {\it upper (Hausdorff) hemimetric} \cite[post Proposition 4.1.4]{kt_corresp}
  \begin{equation*}
    d_{+}(A,B):=\inf\left\{\varepsilon\ge 0\ |\ B\subseteq\text{$\varepsilon$-neighborhood of }A\right\}.
  \end{equation*}
  but not conversely. The {\it proof} of \Cref{th:ballsvietcont}, though, does recover \cite[Proposition 5.7]{laz_bb-sel}: for locally uniform Banach bundles $\cF\le \cE$ over $X$ and $x_0\in X$, $\sigma$ and so on as in the proof of \Cref{th:ballsvietcont}, assuming
  \begin{equation*}
    (\cF_{x_0})_{\le r}
    \subset
    V_{>K}(U\mid\sigma)
    \quad
    \left(\text{in the notation of \Cref{eq:vusigma}}\right),
  \end{equation*}
  if $x$ is sufficiently close to $x_0$ (so that the ball $(\cF_x)_{\le r}$ is Hausdorff-close to $(\cF_{x_0})_{\le r}$) we have $(\cF_{x})_{\le r}\subset V_{>K-\varepsilon}(U\mid\sigma)$ for pre-selected small $\varepsilon>0$.

  It would be possible to state a common generalization of \cite[Proposition 5.7]{laz_bb-sel} and \Cref{th:ballsvietcont}, and the latter's proof essentially delivers it; the statement, though, would be a little awkward: one would have to require a formally weaker form of upper-Vietoris convergence, involving the sets $V_{>K}(U\mid\sigma)$ rather than arbitrary open subsets of $\cE$. 
\end{remark}


\pf{th:loctrivfg}
\begin{th:loctrivfg}
  \Cref{item:th:loctrivfg:loctriv} $\Rightarrow$ \Cref{item:th:loctrivfg:pfg} is a consequence of Swan's Theorem \cite[Theorem 2]{zbMATH03179258}, while \Cref{item:th:loctrivfg:pfg} $\Rightarrow$ \Cref{item:th:loctrivfg:fg} is formal, so the substantive implication is \Cref{item:th:loctrivfg:fg} $\Rightarrow$ \Cref{item:th:loctrivfg:loctriv}.
  
  The subhomogeneity of $\cE$ and the fact that every $X_d$ admits a finite $\cE$-trivializing open cover already follows \cite[Theorem 1.11]{2405.14518v1} from {\it topological} finite generation. The issue at hand is to show that the stronger {\it algebraic} assumption also implies the continuity of the dimension function \Cref{eq:dimfn} (as opposed to its lower semicontinuity). Equivalently, the claim is that the strata $X_d$, a priori {\it locally} closed, are in fact all closed (or all open, or all clopen). We simplify the setup progressively.

  \begin{enumerate}[(I),wide]

  \item\label{item:th:loctrivfg:condtriv} {\bf : reduction to conditionally trivial $\cE$.} Recall from \Cref{def:bdleattrs} that {\it Conditionally} trivial means trivial over every stratum $X_d$, $d\in \bZ_{\ge 0}$. \Cref{le:fincov} below shows (via \cite[Theorem 1.11]{2405.14518v1}, which ensures the hypothesis of the lemma is equivalent to topological finite generation) that $X$ has a finite closed cover $X=\bigcup_{i=1}^s Y_i$ with $\cE|_{Y_i}$ conditionally trivial. Every net
    \begin{equation*}
      X_{d}\supset (x_{\lambda})
      \xrightarrow[\lambda]{\quad}
      x\in X_{d'}
      ,\quad
      d>d'
    \end{equation*}
    will have a convergent subnet lying within a single $Y_i$, so we can restrict attention to the latter. 
    
  \item\label{item:th:loctrivfg:2str} {\bf : reduction to $\cP_{(d_0,d_1)}$.} This will be an induction on the number $k$ of strata $X_{d_i}$, $d_0<d_1<\cdots<d_{k-1}$. Assuming by the inductive hypothesis that the $(k-1)$-strata case is settled for $k\ge 3$, we have the clopen partition
    \begin{equation*}
      \text{closed }X_{<d_{k-1}} = X_{\le d_{k-1}-1} = \coprod_{i=0}^{k-2}X_{d_i}.
    \end{equation*}
    We can now restrict attention to small compact neighborhoods of points in individual $X_{d_i}$, $i<k-1$ to conclude (by the 2-strata case, assumed settled for the purpose of the present argument) that that the closure of $X_{d_{k-1}}$ does not meet said $X_{d_{i}}$, $i<k-1$. This completes the induction step.     

  \item\label{item:th:loctrivfg:d00} {\bf : reduction to $\cP_{(0,d_1)}$.} We are assuming $\cE$ conditionally trivial and hence its restriction $\cE_{d_0}:=\cE|_{X_{d_0}}$ trivial. Extend a nowhere-zero section $\sigma\in \Gamma(\cE_{d_0})$ to all of $X$ (denoting the extension by the same symbol), and replace $X$ with a closed neighborhood of $X_{d_0}$ over which $s$ is non-zero.

    Now,
    \begin{equation*}
      \cF:=\coprod_{x\in X}\cF_x
      ,\quad
      \cF_x:=\bC \sigma(x)
    \end{equation*}
    is a subbundle in the sense of \cite[\S II.17, Exercise 41]{fd_bdl-1}, so we can form the quotient bundle $\cQ:=\cE/\cF$, (F) rather than just (H) by \Cref{cor:quotbyloctriv}. The fiber dimensions of $\cQ$ are $d_0-1$ and $d_1-1$ and the strata are the same (i.e. $X_{\cQ=d_i-1}=X_{\cE=d_i}$, $i=0,1$), so we can repeat the procedure until the lower dimension vanishes. Passing to bundle quotients will not affect (either algebraic or topological) finite generation by \Cref{res:quotnotf}\Cref{item:res:quotnotf:sectquot}.
    
  \item\label{item:th:loctrivfg:01str} {\bf : reduction to $\cP_{0,1}$.} We are now considering the $\cP_{0,d_1}$ instance of the problem. We can proceed as in the previous step, this time quotienting instead by the subbundle spanned by a section vanishing {\it precisely} on $X_0$: such sections do exist by \cite[\S 33, Exercise 4]{mnk}, because \cite[Theorem 1.11]{2405.14518v1} $X_0\subseteq X$ is closed and $G_{\delta}$.

    The subbundle $\cF\le \cE$ is now (locally) trivial only over $X_1$, hence so is the quotient $\cE/\cF$. Continuity of the norm at points of $X_1$ then follows from the fact that all bundles vanish there. 
    
  \item {\bf : Conclusion.} After steps \Cref{item:th:loctrivfg:condtriv} and \Cref{item:th:loctrivfg:01str} we have $X=(Z:=X_0)\coprod (U:=X_1)$, $\cE|_U$ is trivial, and
    \begin{equation*}
      \Gamma(\cE)=
      C_0(U) = C_0(X\setminus Z) = \left\{f\in C(X)\ |\ f|_Z\equiv 0\right\}. 
    \end{equation*}
    The balance of the claim is now as follows: if, for an open subset $U\subseteq X$, the ideal $C_0(U)\subseteq C(X)$ is (algebraically) finitely-generated, then $U$ is also closed. Because, however, $C_0(U)$ is a {\it Hilbert} $C(X)$-module in the sense of \cite[Definition 15.1.5]{wo}, its algebraic finite generation makes it projective \cite[Corollary 15.4.8]{wo} and hence the section module of a locally trivial bundle by Serre-Swan again.
  \end{enumerate}
  This completes the proof.  
\end{th:loctrivfg}

The hypotheses on a Banach bundle $\cE\xrightarrowdbl{} X$ of the following result are jointly equivalent, by \cite[Theorem 1.11]{2405.14518v1}, to $\Gamma(\cE)$ being; stated in its present form, however, renders \Cref{le:fincov} independent of that earlier work. 

\begin{lemma}\label{le:fincov}
  Let $\cE\xrightarrowdbl{}X$ be a subhomogeneous (F) Banach bundle, conditionally f.t. and with $G_{\delta}$ $X_{\le d}$.
  
  $X$ has a finite closed cover $X=\bigcup_{i=1}^s Y_i$ with $\cE|_{Y_i}$ conditionally trivial for all $i$. 
\end{lemma}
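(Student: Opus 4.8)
Here is the route I would take.

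The plan is to induct on the number $k$ of strata $X_{d_0}<\cdots<X_{d_{k-1}}$, proving the slightly stronger statement that the cover $\{Y_i\}$ may be taken to consist of \emph{zero sets} (functionally closed subsets $g^{-1}(0)$, $g\colon X\to[0,1]$ continuous) of $X$. This strengthening is exactly what makes the inductive gluing go through: in a normal space a closed set is a zero set iff it is $G_{\delta}$, and realising a set as $g^{-1}(0)$ is what lets Urysohn/Tietze manufacture the interpolating functions used below. Throughout I use that, $\cE$ being (F), the dimension function is lower semicontinuous, so each $X_{\le d}$ is closed; with the $G_{\delta}$ hypothesis this makes $X_{\le d}$ a zero set and $X_{>d}$ an open, $F_{\sigma}$, hence $\sigma$-compact subset. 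The base case $k=1$ is homogeneous and of finite type: a finite open cover $\{B_j\}$ trivialises $\cE$, and for a subordinate partition of unity $\{\rho_j\}$ the \emph{maximiser sets} $M_j:=\{x\mid \rho_j(x)\ge \rho_{j'}(x)\ \forall j'\}$ are finitely many zero sets covering $X$ with $M_j\subseteq\{\rho_j>0\}\subseteq B_j$, so $\cE|_{M_j}$ is trivial. This ``maximiser of a partition of unity'' device is the workhorse for the whole argument.

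For the inductive step I peel off the top stratum. Set $Z:=X_{\le d_{k-2}}$, a compact subspace on which $\cE$ again satisfies the hypotheses but with one fewer stratum; the induction hypothesis supplies a finite cover $Z=\bigcup_i Y_i'$ by conditionally trivial zero sets of $Z$. The complement $U:=X\setminus Z=X_{d_{k-1}}$ is open, locally compact Hausdorff and $\sigma$-compact, hence paracompact, and $\cE|_U$ is of finite type; fix a trivialising finite open cover $\{B_j\}$ of $U$ with subordinate partition of unity $\{\rho_j\}$. Choose a Urysohn function $h\colon X\to[0,1]$ with $Z=h^{-1}(0)$ (available as $Z$ is a closed $G_{\delta}$) and damp the $\rho_j$ by setting $\sigma_j:=h\rho_j$ on $U$ and $\sigma_j:=0$ on $Z$; boundedness of the $\rho_j$ makes each $\sigma_j$ continuous on all of $X$, with $\sum_j\sigma_j=h$. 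The associated maximiser sets $M_j$ are then zero sets covering $X$, they contain $Z$ (where all $\sigma_j$ vanish), and they satisfy $M_j\cap U\subseteq B_j$, since on $U$ the maximiser obeys $\sigma_j\ge h/n>0$ and hence $\rho_j>0$.

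It remains to reinstate control over the lower strata, all of which lie inside $Z$. Writing each $Y_i'=\gamma_i^{-1}(0)$ for $\gamma_i\colon Z\to[0,1]$ and extending by Tietze to $g_i\colon X\to[0,1]$, the maximiser sets $\wt Y_i:=\{x\mid g_i(x)\le g_{i'}(x)\ \forall i'\}$ are zero sets covering $X$ with $\wt Y_i\cap Z\subseteq Y_i'$: if $z\in Z$ lies in $\wt Y_i$, then $g_i(z)\le g_{i'}(z)$ for the index $i'$ with $z\in Y_{i'}'$, forcing $g_i(z)=0$, i.e. $z\in Y_i'$. The final cover is $\{\wt Y_i\cap M_j\}_{i,j}$. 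Each piece is a zero set; the family covers $X$ because $\bigcup_i\wt Y_i=\bigcup_j M_j=X$; and it is conditionally trivial because over any lower stratum $(\wt Y_i\cap M_j)\cap X_{d}\subseteq Y_i'\cap X_{d}$, trivial by induction, while over the top stratum $(\wt Y_i\cap M_j)\cap U\subseteq B_j$, trivial by construction. Since the pieces are zero sets, this also preserves the strengthening and closes the induction.

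The main obstacle, and the thing the argument is organised around, is that the strata are \emph{not} closed: a trivialising chart over the open top stratum cannot be shrunk to a set that is closed in $X$ and still contained in the stratum (a non-compact open stratum is no finite union of such sets), so the closed pieces are forced to straddle the stratum boundary, and one must keep each straddling piece stratum-wise trivial on both sides of it. This is precisely what the damping by $h$ and the zero-set bookkeeping accomplish; without the $G_{\delta}$ hypothesis—which simultaneously yields $h$ and the paracompactness of $U$—there would be no continuous gadget interpolating across the boundary, and indeed the conclusion can fail.
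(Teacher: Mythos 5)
Your proof is correct, and it reaches the conclusion by genuinely different mechanics than the paper's. The paper runs its induction \emph{bottom-up}: it conditionally trivializes $\cE$ over progressively larger $X_{\le d}$, at each stage focusing on a \emph{single} member of a judiciously chosen finite closed cover, so that the lower part $Z=X_{\le d_i}$ may always be assumed conditionally trivial \emph{in its entirety}; the next stratum $U$ (an $F_\sigma$, hence paracompact and normal --- the same topological input you use) then has its finite trivializing open cover shrunk, via the shrinking lemma for normal spaces, to opens $V_j$ whose closures in $U$ sit inside the trivializing patches, and the desired closed pieces are the closures $\overline{V_j}$ taken in the ambient space, whose lower-strata portions land in $Z$ and are absorbed by its conditional triviality. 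Because the lower part is conditionally trivial outright at every stage, the paper never needs to make a cover of $Z$ compatible with a cover of $X$ --- which is exactly the problem your Tietze-extension/minimizer-set device is built to solve. Your route instead peels the \emph{top} stratum, applies the inductive hypothesis to the compact $Z=X_{\le d_{k-2}}$, and then glues: the zero-set strengthening, the damping function $h$, and the maximizer/minimizer sets are the price of, and the machinery for, that gluing (note that your maximizer sets of a partition of unity are precisely a closed shrinking of a finite open cover in disguise, so that ingredient coincides with the paper's). As for what each approach buys: the paper's ``focus on one member'' reduction keeps the argument short and free of zero-set bookkeeping, with the stage-wise covers combining implicitly at the end; your version keeps the entire recursion on compact spaces, proves the marginally stronger statement that the cover may consist of zero sets, and makes fully explicit the compatibility of covers across the stratum boundary --- a point the paper treats very tersely (as written, its shrinking step yields pieces covering only $X_{\le d_{i+1}}$, and one must still throw in $Z$ and arrange coverage of the higher strata to iterate), so your argument can be read as an independent, self-contained repair of that step as much as an alternative to it.
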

\begin{proof}
  Stratify $X$ by the $X_d$, $d_0<\cdots<d_{k-1}$. We conditionally trivialize $\cE$ over progressively larger $X_{\le d}$, each time focusing on one member of a judiciously-chosen finite closed cover.

  First cover $X$ with finitely many closed sets over which $\cE|_{X_{d_0}}$ is trivial, together with closed sets avoiding $X_{d_0}$ and hence amenable to induction on $k$. In homing in on a single such set, we may as well assume $\cE|_{X_{d_0}}$ trivial. This constitutes the first step in an inductive process. To proceed, assume $\cE_{X_{\le d_i}}$ conditionally trivial. In proving the main claim for $X_{\le d_{i+1}}$, we will have completed the induction step. The simplified setup, then, is this:
  \begin{itemize}[wide]
  \item $\cE|_Z$ is conditionally trivial for $Z\subseteq X$ closed $G_{\delta}$ (having substituted the symbols $X$ and $Z$ for $X_{\le d_{i+1}}$ and $X_{\le d_i}$ respectively);
  \item and $\cE|_U$ is homogeneous of finite type for $U:=X\setminus Z$.
  \end{itemize}
  We are assuming that $\cE|_{\cU}$ is trivialized by a finite open cover $U=\bigcup_{i=1}^n U_n$, and also that $U$ is $F_{\sigma}$ in $X$ and hence (\cite[Theorems 20.7 and 20.10]{wil_top}) paracompact so also normal. It follows \cite[Theorem 36.1, Step 1 of the proof]{mnk} that there is an open cover of $U$ by $V_i$, $1\le i\le n$ with $V_i$'s closure in $U$ contained in $U_i$. The desired finite cover is by $\overline{V_i}$ (closure in $X$).
\end{proof}

\begin{remark}\label{re:notopencover}
  A finite cover as in \Cref{le:fincov}, conditionally trivializing $\cE$, can certainly not be chosen {\it open} in general: take for $X$ the {\it cone} \cite[p.9]{hatch_at}
  \begin{equation*}
    X
    :=
    CY
    :=
    Y\times [0,1]/Y\times \{0\}
  \end{equation*}
  on a compact Hausdorff space $Y$, and for $\cE$ a bundle restricting to
  \begin{equation*}
    \pi^*\cF
    ,\quad
    Y\times (0,1]
    \xrightarrow{\quad\pi\quad}
    Y
  \end{equation*}
  for non-trivial $\cF\xrightarrowdbl{}Y$ and trivial at the tip of the cone. $\cE$ cannot be trivialized in any neighborhood of that tip. 
\end{remark}

\Cref{le:fincov} allows the reduction of general problems concerning the bundles mentioned there to particularly pleasant section modules.

\begin{corollary}\label{cor:le:fincov:fincov}
  Let $\cE$ be a subhomogeneous (F) Banach bundle over compact Hausdorff $X$, conditionally f.t. and with $G_{\delta}$ $X_{\le d}$.

  $X$ has a finite closed cover $X=\bigcup_{i=1}^s Y_i$ with
  \begin{equation*}
    \forall i
    \quad
    \Gamma(\cE|_{Y_i})
    \cong
    \bigoplus_j^{\text{finite sum}}C_0(U_{ij})
    ,\quad
    U_{ij}\subseteq Y_i\text{ open $F_{\sigma}$}.
  \end{equation*}
\end{corollary}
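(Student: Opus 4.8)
The plan is to deduce the corollary from \Cref{le:fincov} by first trivializing conditionally and then peeling off one fiber dimension at a time, recording a $C_0$-summand at each peeling. First I would invoke \Cref{le:fincov} to get a finite closed cover $X=\bigcup_iY_i$ with each $\cE|_{Y_i}$ conditionally trivial; since a finite closed refinement of a finite closed cover is again one, and since subhomogeneity, conditional finite type and the $G_\delta$-ness of the $X_{\le d}$ all pass to closed subspaces, it suffices to fix one conditionally trivial $\cE$ over compact Hausdorff $Y$ and produce — after a further finite closed refinement of $Y$ if needed — an isomorphism
\[
  \Phi\colon\bigoplus_{m=1}^{N}C_0(U_m)\xrightarrow{\ \sim\ }\Gamma(\cE),\qquad U_m:=\{x\in Y\ |\ \dim\cE_x\ge m\},\quad N:=\max_x\dim\cE_x,
\]
where each $U_m=Y\setminus X_{\le m-1}$ is open (lower semicontinuity of \Cref{eq:dimfn}) and $F_\sigma$ (as $X_{\le m-1}$ is closed $G_\delta$), so these are exactly the admissible $U_{ij}$.

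The map $\Phi$ will be $(f_m)_m\mapsto\sum_mf_ms_m$ for an \emph{adapted frame}: sections $s_m$ of $\cE$ over $U_m$ such that $\{s_m(x)\ |\ m\le\dim\cE_x\}$ is a basis of $\cE_x$ at every $x$, chosen so that $f\,s_m$ (extended by $0$ across $X_{\le m-1}$) is a genuine global continuous section for every $f\in C_0(U_m)$ — the vanishing of $f$ along $X_{\le m-1}$ absorbing the fact that $s_m$ itself need not extend. I would build this frame by induction on $N$, mirroring the reduction steps in the proof of \Cref{th:loctrivfg}. On a sufficiently small closed piece where the minimal fiber dimension is positive, compactness together with the extendability of sections off closed sets (\Cref{res:quotnotf}\Cref{item:res:quotnotf:nocohom}) furnishes a nowhere-zero $s_1$; then $\cF:=\bC s_1$ is a trivial line subbundle, $\cQ:=\cE/\cF$ is again (F) by \Cref{cor:quotbyloctriv}, conditionally trivial, of maximal dimension $N-1$, with $U_m(\cQ)=U_{m+1}(\cE)$. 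Lifting an adapted frame $\ol{s}_1,\dots,\ol{s}_{N-1}$ of $\cQ$ through the surjection $\Gamma(\cE)\twoheadrightarrow\Gamma(\cQ)$ of \Cref{res:quotnotf}\Cref{item:res:quotnotf:sectquot} gives $s_2,\dots,s_N$, and the basis property transports because $s_1(x)$ spans $\cF_x=\ker(\cE_x\to\cQ_x)$ while the $s_{j+1}(x)$ map to a basis of $\cQ_x$.

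With the frame in hand, both injectivity and surjectivity of $\Phi$ drop out of the same induction by applying the quotient map $q\colon\Gamma(\cE)\to\Gamma(\cQ)$, which kills $f_1s_1$ and carries $\sum_{m\ge2}f_ms_m$ to $\sum_{j\ge1}f_{j+1}\ol{s}_j$. For injectivity, the inductive hypothesis over $\cQ$ forces $f_m=0$ for $m\ge2$, whereupon $f_1s_1=0$ with $s_1$ nowhere zero gives $f_1=0$; for surjectivity, given $\sigma\in\Gamma(\cE)$ one expands $q\sigma$ in the frame of $\cQ$, subtracts the corresponding $\sum_{m\ge2}f_ms_m$, and lands in $\ker q=\Gamma(\cF)=C_0(U_1)\,s_1$, reading off the last coordinate $f_1\in C_0(U_1)=C(Y')$ on the piece $Y'$.

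The hard part will be the bottom stratum when the minimal fiber dimension is $0$: there is no nowhere-zero section to quotient by, and one cannot divide out a line subbundle that collapses on the zero-locus $X_0$, since the resulting quotient fails to be (F) — this is precisely the upper-Vietoris obstruction of \Cref{th:ballsvietcont}, exhibited in \Cref{res:quotnotf}\Cref{item:res:quotnotf:quotnotf} and \Cref{ex:semicontstillnotf}. I would resolve this exactly as in the endgame of \Cref{th:loctrivfg}, by ordering the peeling so as to confront $X_0$ only in the two-stratum situation with dimensions $0$ and $d$, where the top stratum $U_1=X_{\ge1}$ is open and $\cE|_{U_1}$ is trivial. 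There the quotient by the full top bundle vanishes, so no continuity problem arises, and the decomposition is read off directly: $\Gamma(\cE)=\Gamma_0(\cE|_{U_1})\cong C_0(U_1)^{\oplus d}=\bigoplus_{m=1}^{d}C_0(U_m)$, using that $X_0$ is closed $G_\delta$ (so that a trivializing frame over $U_1$ multiplied by functions vanishing on $X_0$ extends by zero). Collecting the $C_0(U_m)$-summand produced at each peeling, and reassembling along the finite closed refinement, yields the asserted form of $\Gamma(\cE|_{Y_i})$.
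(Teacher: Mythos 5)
Your overall skeleton (reduce to conditionally trivial bundles via \Cref{le:fincov}, then build a frame adapted to the dimension filtration realizing $\Gamma(\cE)\cong\bigoplus_m C_0(U_m)$ with $U_m=\{\dim\cE_x\ge m\}$) is the same as the paper's, and the target decomposition is identical. But your mechanism for producing the frame has a genuine gap: the zero stratum cannot be quarantined into a two-stratum situation, so your peeling induction has no first move in general. Concretely, take $X=[0,1]$ and $\cE\le X\times\bC^2$ the (F) bundle with
\begin{equation*}
  \Gamma(\cE)=\left\{f\in C(X,\bC^2)\ \middle|\ f(0)=0,\ f(1/n)\in\bC e_1\ \forall n\ge 1\right\},
\end{equation*}
so that $X_{\cE=0}=\{0\}$, $X_{\cE=1}=\{1/n\}_{n\ge 1}$, $X_{\cE=2}=(0,1]\setminus\{1/n\}_n$. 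This satisfies every hypothesis of the corollary (subhomogeneous, conditionally trivial, all $X_{\le d}$ closed $G_\delta$). Here no nowhere-zero section exists on any closed piece containing $0$, so no line bundle can be peeled there; and no finite closed cover can arrange that the piece confronting $X_0$ sees only two strata. Indeed, the pieces meeting $X_{\cE=2}$ have closed union containing $\overline{X_{\cE=2}}=[0,1]$; under your scheme a piece meeting both $X_{\cE=2}$ and $X_{\cE=0}$ avoids every $1/n$, while a piece meeting $X_{\cE=2}$ and $X_{\cE=1}$ avoids $0$ --- but then finitely many closed sets avoiding $0$ would have to contain all $1/n$, forcing their (closed) union to contain $0$, a contradiction. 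So some piece meets all three strata, your induction cannot start on it, and your endgame does not apply to it. (The analogous reduction in \Cref{th:loctrivfg} is not available to you: there, localization to two strata is extracted from the clopen-ness of strata, which is a \emph{conclusion} of algebraic finite generation; under the present, purely topological hypotheses the strata are genuinely non-clopen, as the example shows.)

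The repair is exactly the device you reserve for your endgame, promoted to every peeling step: instead of quotienting by the span of a \emph{nowhere-zero} section, quotient by the span of sections which vanish \emph{precisely} on the union of the lower strata (these exist by \cite[\S 33, Exercise 4]{mnk}, the sets $X_{\le d}$ being closed $G_\delta$) and restrict to a fiberwise basis on the next stratum. The spanned subbundle degenerates on the lower strata, but the quotient is still (F): over the open higher-dimensional locus one applies \Cref{cor:quotbyloctriv} locally, and over the lower strata the quotient fibers are zero, where upper semicontinuity of the norm already yields continuity. This is precisely the paper's recursion, and it disposes of the example above in one pass, with no need to separate strata. Two smaller points in the same vein: your lifting step applies the surjection $\Gamma(\cE)\twoheadrightarrow\Gamma(\cQ)$ of \Cref{res:quotnotf}\Cref{item:res:quotnotf:sectquot} to frame elements $\ol{s}_j$ defined only over the open sets $U_j$, which that result does not cover --- one must first multiply by functions vanishing on the lower strata (making them global sections over a closed set) and lift afterwards, then correct the lifts by $C(X)$-combinations of the earlier frame so they again vanish on lower strata; and the quotient of a conditionally \emph{trivial} bundle by a line subbundle is in general only conditionally \emph{locally} trivial, so your hedge about further finite closed refinement is actually needed, not optional.
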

\begin{proof}
  We can assume $\cE$ conditionally trivial by \Cref{le:fincov}, with fiber dimensions $d_0<\cdots <d_{k-1}$. $d_0$ sections $\sigma_j$ restricting to a basis in every fiber above $X_{d_0}$ can be extended to a closed neighborhood $U\supset X_{d_0}$. The restriction $\cE|_{\overline{X\setminus U}}$ has fewer fiber dimensions and can be taken care of by induction on $k$, so we can henceforth assume $U=X$. 

  Let $\cF\le \cE$ be the trivial rank-$d_0$ subbundle generated by $\{\sigma_j\}_j$. The quotient $\cQ:=\cE/\cF$ is trivial of respective rank $d_i-d_0$ over $X_{\cE=d_i}$. Focusing for the moment on $X_{\le d_1}$, one can always find $d_1-d_0$ sections $\sigma'_{j'}\in \Gamma_0\left(\cE|_{X_{\le d_1}}\right)$ generating $\cQ|_{X_{\le d_1}}$:
  \begin{itemize}[wide]
  \item choose $d_1-d_0$ sections in $\Gamma_0\left(\cQ|_{X_{\le d_1}}\right)$ forming a basis in every fiber above $X_{d_1}$ (this is possible \cite[\S 33, Exercise 4]{mnk}, the closed subset $X_{d_0}\subseteq X_{d_1}$ being $G_{\delta}$);

  \item extend those sections by 0 to all of $X_{\le d_1}$;

  \item lift to sections of $\cE$ (\Cref{res:quotnotf}\Cref{item:res:quotnotf:sectquot});

  \item and finally, subtract appropriate $C(X)$-linear combinations of the initial $d_0$ sections $\sigma_j$ to ensure vanishing along $X_{d_0}$.
  \end{itemize}
  Now repeat the procedure in the first paragraph of the proof: the $\sigma'_{j'}$ retain the basis property over a closed neighborhood $V\supseteq X_{\le d_1}$, which we may as well assume is all of $X$ by relegating $\overline{X\setminus V}$ to the induction hypothesis. But now we have a larger subbundle
  \begin{equation*}
    \cF\oplus \cF'\le \cE
    \text{ (equality over $X_{\le d_1}$)}
    ,\quad
    \cF:=\spn{\sigma_j}
    ,\quad
    \cF':=\spn_{\sigma'_{j'}}
  \end{equation*}
  with
  \begin{equation*}
    \Gamma(\cF)\cong C(X)^{d_0}
    \quad\text{and}\quad
    \Gamma(\cF')\cong C_0(X_{>d_0})^{d_1-d_0}.
  \end{equation*}
  It will be clear now how the recursion proceeds. 
\end{proof}

\section{Complements on tensor products and bundle categories}\label{se:tens} 

\Cref{pr:pullbclcov} relies implicitly on the {\it restriction} functor
\begin{equation}\label{eq:res2y}
  \tensor*[_{C(X)}]{\cat{Ban}}{}
  \ni
  M
  \xmapsto[\bullet|_{Y}]{\quad C(Y)\wo_{C(X)}-\ \cong\ C(Y)\otimes_{C(X)}-}
  M|_{Y}
  :=
  M/I_Y M  
  \in
  \tensor*[_{C(Y)}]{\cat{Ban}}{}
\end{equation}
where
\begin{itemize}[wide]
\item $Y\subseteq X$ is a closed embedding of compact Hausdorff spaces with
  \begin{equation*}
    I_{Y}
    :=
    \left\{f\in C(X)\ |\ f|_Y\equiv 0\right\}.
  \end{equation*}

\item One need not distinguish between the plain algebraic span $I_Y M$ and its closure by the Cohen factorization theorem \cite[Theorem 32.22]{hr-2}: $I_Y M$ is automatically closed in $M$, so that $M/I_YM$ is a Banach module over $C(Y)=C(X)/I_Y$ (cf. also remarks to the same effect in \cite[\S 1, last paragraph on p.434 and discussion immediately preceding Theorem 1.7]{zbMATH04134853}, for example).

\item And the preceding remark also justifies, in this context, the isomorphism between the {\it projective} module tensor product $\wo_{C(X)}$ \cite[\S III.3.8]{clm_ban-mod} and its purely algebraic counterpart.
\end{itemize}

\Cref{pr:pullbclcov} and \Cref{cor:injbijclcov} are meant to suggest that Banach $C(X)$-modules play well with the finite closed covers provided by \Cref{le:fincov}.

\begin{proposition}\label{pr:pullbclcov}
  Let $X=\bigcup_{i=0}^{n-1}X_i$ be a closed cover of a compact Hausdorff space, $M\in \tensor*[_{C(X)}]{\cat{Ban}}{}$, and set
  \begin{equation*}
    X_{\bf i}:=\bigcap_{i\in {\bf i}}X_i
    ,\quad
    {\bf i}\in\text{ powerset }2^{[n]} = 2^{\{0\cdots n-1\}}.
  \end{equation*}
  The canonical $\tensor*[_{C(X)}]{\cat{Ban}}{}$-morphism
    \begin{equation}\label{eq:canlim}
      M
      \xrightarrow{\quad\cat{can}\quad}
      L
      :=
      \varprojlim_{2^{[n]}}\left({\bf i}\xmapsto{\quad}M|_{X_{\bf i}}\right)
      :=
      \text{limit in }\tensor*[_{C(X)}]{\cat{Ban}}{}
    \end{equation}
    is an isomorphism in $\tensor*[_{C(X)}]{\cat{Ban}}{^{\infty}}$.
\end{proposition}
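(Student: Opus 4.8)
The plan is to identify the limit $L$ concretely and then verify that $\cat{can}$ is a bounded bijection. Writing $I_i:=I_{X_i}=\{f\in C(X)\mid f|_{X_i}\equiv 0\}$, so that $M|_{X_i}=M/I_iM$ with $I_iM$ closed by Cohen factorization \cite[Theorem 32.22]{hr-2}, one first unwinds $L$. Over the poset of (nonempty) subsets a point of the limit is a compatible family $(\xi_{\mathbf i})_{\mathbf i}$; since $X_{\mathbf i}=\bigcap_{i\in\mathbf i}X_i$ sits inside each singleton's set $X_i$, the family is determined by its values on the singletons and the compatibility reduces to agreement over pairwise intersections. Thus
\begin{equation*}
  L\cong\left\{(\xi_i)_i\ \middle|\ \xi_i\in M|_{X_i},\ \xi_i|_{X_i\cap X_j}=\xi_j|_{X_i\cap X_j}\ \forall i,j\right\},
\end{equation*}
and $\cat{can}$ sends $m\mapsto(m|_{X_i})_i$. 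Its kernel is $\bigcap_i I_iM$ and surjectivity means every matching family lifts to a global $m$; I will prove the first directly and the second by induction on $n$.

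For injectivity I would exploit that $I_i=C_0(X\setminus X_i)$ carries a bounded approximate unit $(u^{(i)}_\lambda)$, so that for $m\in\bigcap_iI_iM$ Cohen factorization gives $u^{(i)}_\lambda m\to m$ in each coordinate. The crucial point is that the $X_i$ \emph{cover} $X$: a product $u^{(0)}_{\lambda_0}\cdots u^{(n-1)}_{\lambda_{n-1}}$ vanishes on $\bigcup_iX_i=X$ and is therefore the zero function, whence $u^{(0)}_{\lambda_0}\cdots u^{(n-1)}_{\lambda_{n-1}}m=0$. Letting the indices tend to their limits one at a time (each multiplication operator being continuous) collapses the product to $m$, so $m=0$ and $\cat{can}$ is injective for every $n$.

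Surjectivity rests on the two–set case, a module Mayer–Vietoris. The key input is the classical ideal identity $I_{A\cap B}=I_A+I_B$ for closed $A,B\subseteq X$, proved by extending, via Tietze and the pasting lemma, a function vanishing on $A\cap B$ to a splitting $f=g+h$ with $g|_A=h|_B=0$. Given a matching pair lifted to $m_0,m_1\in M$, the matching condition reads $m_0-m_1\in I_{A\cap B}M=(I_A+I_B)M=I_AM+I_BM$; writing $m_0-m_1=a+b$ with $a\in I_AM$, $b\in I_BM$ and setting $m:=m_0-a=m_1+b$ produces the required lift, which together with injectivity settles $n=2$.

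For the inductive step I would peel off one set: put $A:=\bigcup_{i<n-1}X_i$, so that $X=A\cup X_{n-1}$. Applying the inductive hypothesis to the $(n-1)$–element cover $\{X_i\}_{i<n-1}$ of $A$ — using transitivity of restriction, $(M|_A)|_Y=M|_Y$, which is transitivity of the base change \Cref{eq:res2y} — glues $(\xi_i)_{i<n-1}$ into a class over $A$, lifted to some $\tilde m\in M$ with $\tilde m-m_i\in I_iM$. One then checks that $\tilde m$ and $m_{n-1}$ agree on $A\cap X_{n-1}=\bigcup_{i<n-1}(X_i\cap X_{n-1})$: they agree in each $M|_{X_i\cap X_{n-1}}$ because $\tilde m-m_{n-1}=(\tilde m-m_i)+(m_i-m_{n-1})$ lands in $I_{X_i\cap X_{n-1}}M$, and the injectivity already established, applied to the cover of $A\cap X_{n-1}$, upgrades this to agreement in $M|_{A\cap X_{n-1}}$. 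The two–set case applied to $X=A\cup X_{n-1}$ then yields the global lift. Finally, since $\cat{can}$ is a bounded $C(X)$–linear bijection of Banach modules, the open mapping theorem makes its inverse bounded, giving the asserted isomorphism in $\tensor*[_{C(X)}]{\cat{Ban}}{^\infty}$ (it need not be isometric, which is exactly why one lands in the larger category). I expect the main obstacle to be this surjectivity reassembly: a naive one–set–at–a–time merge stumbles on the failure of $(\bigcap_jJ_j)M=\bigcap_j(J_jM)$, and it is the inductive hypothesis — not any direct manipulation — that certifies the intermediate compatibility over $A\cap X_{n-1}$.
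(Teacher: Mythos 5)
Your proof is correct, but it takes a genuinely different route from the paper's. The paper reduces everything to the single quantitative estimate \Cref{eq:bddbelow}, namely $\|m\|\le n\,\|\cat{can}(m)\|$, proved via Varela's restriction-norm formula \cite[Lemma 7.5]{gierz_bdls} together with the observation that $f=\sum_i f_i$ is invertible with contractive inverse; this gives injectivity and closedness of the image at once, and density of the image then follows by applying the same estimate to the cokernel $Q:=L/\ol{\im\cat{can}}$, using that the restriction functors \Cref{eq:res2y}, being left adjoints, preserve cokernels. That argument needs neither induction on $n$ nor the open mapping theorem, and it yields the explicit bound $\|\cat{can}^{-1}\|\le n$. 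You instead identify $L$ concretely as pairwise-matching families, prove injectivity by the approximate-unit/Cohen argument (which is sound: the partial products $u^{(0)}_{\lambda_0}\cdots u^{(j)}_{\lambda_j}m$ collapse one index at a time, exactly as you indicate), and prove surjectivity by exact Mayer--Vietoris gluing resting on the ideal identity $I_{A\cap B}=I_A+I_B$ (Tietze plus the pasting lemma) and induction on $n$, with the already-established injectivity certifying compatibility over $A\cap X_{n-1}$ --- and you rightly flag that a naive merge would founder on $(\bigcap_j J_j)M\ne\bigcap_j(J_jM)$. What your route buys: it is elementary and self-contained (no Varela lemma, no cocontinuity argument), it produces the lift by explicit construction rather than by density-plus-closed-image, and the lemma $I_{A\cap B}=I_A+I_B$ is of independent interest. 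What it costs: the bound on $\cat{can}^{-1}$ is non-constructive (open mapping theorem) instead of the paper's explicit constant $n$, and the induction makes the argument longer. One shared point of hygiene, which you handle parenthetically and the paper leaves implicit: the limit must be taken over \emph{nonempty} ${\bf i}\in 2^{[n]}$, since $\emptyset$ is initial in the poset with $X_{\emptyset}=X$, which would render the limit trivially equal to $M$.
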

\begin{proof}
  Being left adjoint to scalar restriction along $C(X)\xrightarrowdbl{}C(Y)$, the functor \Cref{eq:res2y} is {\it cocontinuous} by, say, \cite[Theorem 4.5.3]{rhl_ct-ctxt} (i.e. \cite[\S 3.5]{rhl_ct-ctxt} it preserves colimits). In particular it preserves the cokernel $Q:=L/\ol{\im\cat{can}}$, so it will suffice to show that
  \begin{equation}\label{eq:bddbelow}
    \exists C>0
    \quad
    \forall m\in M
    \quad
    \|m\|\le C\|\cat{can}(m)\|:
  \end{equation}
  on its own this will prove the injectivity and image closure of $\cat{can}$, while applied to $Q$ in place of $M$ it will also prove that $Q$ is trivial and hence $\im\cat{can}\le L$ is dense.

  As for \Cref{eq:bddbelow}, it will be enough to take $C:=$the $n$ of the statement (the number of closed patches constituting the cover). To see this, recall first that
  \begin{equation*}
    \forall m\in M
    ,\quad
    \forall i
    ,\quad
    \left\|m|_{X_i}\right\|
    =
    \inf\left\{\|fm\|\ \big|\ X\xrightarrow{f\in C(X)}[0,1],\ f|_{\text{some nbhd of $X_i$}}\equiv 1\right\}
  \end{equation*}
  (\cite[Lemma 7.5]{gierz_bdls}, originally due to Varela in somewhat weaker form: \cite[Lemma 1.2]{zbMATH03432957}, \cite[Lemma 3.2]{zbMATH03442669}, \cite[Proposition 2.1]{dg_ban-bdl}). Choosing continuous
  \begin{equation*}
    X
    \xrightarrow{\quad f_i\quad}
    [0,1]
    ,\quad
    f_i|_{X_i}\equiv 1
  \end{equation*}
  with $\|f_i m\|$ very close (symbol: $\sim$) to $\left\|m|_{X_i}\right\|$, $\forall i$ for arbitrary $m\in M$, the function $f:=\sum_i f_i$ is invertible in $C(X)$ with inverse valued in $[0,1]$ and hence of norm $\le 1$. But then
  \begin{equation*}    
    \|m\|
    =
    \left\|f^{-1}\cdot fm\right\|
    \le
    \|fm\|
    \le
    \sum_i \left\|f_i m\right\|
    \sim
    \sum_i \left\|m|_{X_i}\right\|
    \le
    n\cdot \max_i\left\|m|_{X_i}\right\|
    =
    n\|\cat{can}(m)\|,
  \end{equation*}
  finishing the proof.
\end{proof}

\begin{remark}\label{re:notisometry}
  Naturally, there is no reason why \Cref{eq:canlim} would be an isomorphism in the smaller category $\tensor*[_{C(X)}]{\cat{Ban}}{}$, i.e. an isometry; {\it Hilbert} $C(X)$-modules \cite[Definition 15.1.5]{wo} $C(X)^n$ provide simple counterexamples for finite $X$ with at least two elements. 
\end{remark}

An immediate consequence:

\begin{corollary}\label{cor:injbijclcov}
  Given a finite closed cover $X=\bigcup_{i=0}^{n-1} X_i$ of a compact Hausdorff space, a morphism $M\xrightarrow{f}N$ in either $\tensor*[_{C(X)}]{\cat{Ban}}{}$ or $\tensor*[_{C(X)}]{\cat{Mod}}{}$ is injective (bijective) if and only if every restriction $f|_{X_i}$ is. 
\end{corollary}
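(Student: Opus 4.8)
The whole statement is meant to be read off \Cref{pr:pullbclcov}, applied simultaneously to $M$ and to $N$. That proposition supplies natural isomorphisms $\cat{can}\colon M\xrightarrow{\ \sim\ }\varprojlim_{2^{[n]}}M|_{X_{\bf i}}$ and $N\xrightarrow{\ \sim\ }\varprojlim_{2^{[n]}}N|_{X_{\bf i}}$ in $\tensor*[_{C(X)}]{\cat{Ban}}{^{\infty}}$ (and, forgetting norms, in $\tensor*[_{C(X)}]{\cat{Mod}}{}$), and naturality means that under these identifications $f$ becomes the limit morphism $\varprojlim_{\bf i}f|_{X_{\bf i}}$ of \Cref{eq:canlim}. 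Two features of that proof will be used. First, restriction is transitive, $M|_{X_{\bf i}}\cong(M|_{X_i})|_{X_{\bf i}}$ for any $i\in{\bf i}$, so each overlap map $f|_{X_{\bf i}}$ is the image of a single-index $f|_{X_i}$ under a further restriction functor. Second, the bound \Cref{eq:bddbelow} displayed there already shows that the comparison $\iota_M\colon M\to\prod_i M|_{X_i}$, $m\mapsto(m|_{X_i})_i$, is injective (indeed bounded below in the Banach case), and likewise $\iota_N$, fitting into the commuting square $\iota_N\circ f=\bigl(\prod_i f|_{X_i}\bigr)\circ\iota_M$.

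The bijective equivalence is then formal in both directions. If $f$ is an isomorphism, so is each $f|_{X_i}$, since restriction is a functor and functors preserve isomorphisms. Conversely, if every $f|_{X_i}$ is an isomorphism then so is every $f|_{X_{\bf i}}$ (apply the further-restriction functor of the first paragraph to an isomorphism), whence $\varprojlim_{\bf i}f|_{X_{\bf i}}$ is an isomorphism because limits preserve isomorphisms; transporting back along $\cat{can}$ gives that $f$ is an isomorphism. For injectivity the substantive implication is that injectivity of all the $f|_{X_i}$ forces $f$ injective: if $f(m)=0$ then $f|_{X_i}(m|_{X_i})=\bigl(f(m)\bigr)|_{X_i}=0$ for every $i$ by naturality, so $m|_{X_i}=0$ for all $i$, and $\iota_M(m)=0$ then yields $m=0$ from injectivity of $\iota_M$.

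The remaining point is the converse for injectivity, that $f$ injective forces each $f|_{X_i}$ injective, and I expect this to be the main obstacle: it is exactly the assertion that the restriction functor $C(X_i)\wo_{C(X)}-$ carries monomorphisms to monomorphisms, whereas this functor, being a left adjoint along $C(X)\twoheadrightarrow C(X_i)$, is a priori only right exact. In the contractive Banach setting I would extract it from Cohen factorization: given $f(m)\in I_{X_i}N$, factor it as $g\,n'$ with $g\in I_{X_i}$, and feed a bounded approximate unit $(u_\lambda)$ of $I_{X_i}$ through the module action, so that $f(u_\lambda m)=u_\lambda f(m)\to f(m)$ with $u_\lambda m\in I_{X_i}M$; combining the injectivity of $f$ with the closedness of $I_{X_i}M$ (again Cohen) pins $m$ into $I_{X_i}M$, i.e. $m|_{X_i}=0$. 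The care needed here is precisely in controlling this limit—this is the least formal, and most delicate, step—after which the equivalence is complete.
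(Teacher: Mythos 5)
Your first two paragraphs are precisely the paper's own proof: the paper deduces the corollary from the identifications \Cref{eq:canlim} of \Cref{pr:pullbclcov} applied to $M$ and $N$, plus the fact that limits respect injectivity and bijectivity; your spelling-out of this (injectivity of $\iota_M$ via \Cref{eq:bddbelow}, functoriality of restriction for the bijective case, transitivity of restriction for the overlap maps) is correct and needs no repair.

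The problem is your last paragraph, and it is not repairable. From $f(u_\lambda m)=u_\lambda f(m)\to f(m)$ and the injectivity of $f$ you cannot conclude $u_\lambda m\to m$ (hence $m\in I_{X_i}M$): that inference requires $f$ to be bounded below on its image, i.e.\ injective \emph{with closed range}, and a bounded injective map between Banach spaces need not have closed range. Worse, the implication you are trying to prove there --- $f$ injective $\Rightarrow$ every $f|_{X_i}$ injective --- is actually false under the corollary's hypotheses. Take $X=[0,1]$, $M=N=C(X)$, and $f=$ multiplication by the coordinate function $t$: an injective, contractive $C(X)$-module morphism. Take the closed cover $X_0=\{0\}\cup[1/2,1]$, $X_1=[0,1/2]$ (neither member can be dropped, so this is not a degenerate cover). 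Choose $h\in C(X)$ with $h(0)=1$ and $h\equiv 0$ on $[1/2,1]$. Then $f(h)=th$ vanishes on all of $X_0$, so $f(h)\in I_{X_0}N$, while $h\notin I_{X_0}M$ because $h(0)\ne 0$. Equivalently: $0$ is an isolated point of $X_0$, the restriction $f|_{X_0}$ is multiplication by $t|_{X_0}$ on $C(X_0)$, and it kills the nonzero function $\chi_{\{0\}}\in C(X_0)$. Your approximate-unit computation fails at exactly the predicted spot: $f(u_\lambda h)\to f(h)$ indeed, yet $\|u_\lambda h-h\|\ge |h(0)|=1$ for every $\lambda$, because multiplication by $t$ crushes discrepancies concentrated near $0$.

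The correct conclusion is that the ``only if'' half of the stated equivalence holds for bijectivity (by the trivial functoriality argument you give) but is genuinely false for injectivity; the paper's own one-line proof in fact only establishes the ``if'' half --- restrictions injective (bijective) imply $f$ injective (bijective) --- and that is the only direction the paper ever uses (e.g.\ in the proof of \Cref{th:tensidentify}). So your instinct that the converse for injectivity is the delicate step was sound, but the resolution is that this direction must be dropped or the statement weakened, not that it awaits a sharper Cohen-factorization argument.
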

\begin{proof}
  Indeed, we have linear and topological identifications \Cref{eq:canlim} for both $M$ and $N$, and limits respect both injectivity and bijectivity. 
\end{proof}

\begin{remarks}\label{res:notmono}
  \begin{enumerate}[(1),wide]
  \item \Cref{cor:injbijclcov} (or \Cref{pr:pullbclcov}) cannot hold for plain, purely algebraic modules, even for as well-behaved a Banach algebra as $C(X)$ with compact Hausdorff $X$: \Cref{ex:notmono} shows that the $\tensor*[_{C(X)}]{\cat{Mod}}{}$ version of \Cref{eq:canlim} may fail to be injective. It cannot vanish though, unless $M$ itself does: if $M=I_{X_i}M$ for all $i$ then $M=\left(\prod_i I_{X_i}\right)\cdot M=\{0\}$.

  \item The preceding observation is reminiscent of other mismatch phenomena whereby modules over Banach algebras are in some fashion better behaved analytically than algebraically. By \cite[Theorem VII.1.5]{hlmsk_homolog}, for instance, quotients
    \begin{equation}\label{eq:cw2cy}
      C(X)
      \xrightarrowdbl{\quad}
      C(Y)
      ,\quad
      Y\subseteq X\text{ embedding of compact Hausdorff spaces} 
    \end{equation}
    are {\it strictly flat} \cite[Definition VII.1.3]{hlmsk_homolog}: $C(Y)\wo_{C_X}-$ preserves the exactness of Banach-module complexes. Such quotients are nevertheless only very rarely {\it flat} in the ordinary sense (e.g. \cite[Definition 4.0]{lam_lec}) of the functor $C(Y)\otimes_{C(X)}-$ being exact: see \Cref{le:whenquotflat} below.
  \end{enumerate}  
\end{remarks}

\begin{example}\label{ex:notmono}
  Let $X:=[-1,1]$ with finite closed cover
  \begin{equation*}
    X=\left(I_-:=[-1,0]\right)\bigcup \left(I_+:=[0,1]\right)
  \end{equation*}
  and write
  \begin{equation*}
    f_+:=\chi_{I_+}\cdot\id|_X
    \quad\text{and}\quad
    f_-:=\chi_{I_-}\cdot\id|_X
  \end{equation*}
  ($\chi_{\bullet}$ denoting characteristic functions; so $f_+$ is constantly 0 on $I_-$ and the identity on the right-hand half $I_+$ of $X$); then set
  \begin{equation*}
    M_+:=C(X)/f_+\cdot I_0
    \quad\text{and}\quad
    M_-:=C(X)/f_-\cdot I_0.
  \end{equation*}
  The function $f_+$ itself does not belong to $f_+\cdot I_0$, so 
  \begin{equation*}
    m_+:=\text{image of $f_+\in C(X)$ through }
    C(X)
    \xrightarrowdbl{\quad}
    M_+
  \end{equation*}
  is non-zero, contained in $I_- M_+$ (for $f_+\in I_-$), and annihilated by $I_0$; the sign-mirror of this remark is also valid, producing a non-zero $m_-\in I_+ M_-$ and annihilated by $I_0$. Finally, take for $M$ the {\it pushout} \cite[\S 2.5]{brcx_hndbk-1} in $\tensor*[_{C(X)}]{\cat{Mod}}{}$ (plain modules) of
  \begin{equation*}
    \begin{tikzpicture}[>=stealth,auto,baseline=(current  bounding  box.center)]
      \path[anchor=base] 
      (0,0) node (l) {$M_-$}
      +(4,.5) node (u) {$\bC\cong C(X)/I_0$}
      +(8,0) node (r) {$M_+$}
      +(4,-.5) node (d) {$M$}
      ;
      \draw[left hook->] (u) to[bend right=6] node[pos=.5,auto,swap] {$\scriptstyle m_-\mapsfrom 1$} (l);
      \draw[right hook->] (u) to[bend left=6] node[pos=.5,auto] {$\scriptstyle 1\mapsto m_+$} (r);
      \draw[right hook->] (l) to[bend right=6] node[pos=.5,auto,swap] {$\scriptstyle $} (d);
      \draw[left hook->] (r) to[bend left=6] node[pos=.5,auto] {$\scriptstyle $} (d);
    \end{tikzpicture}
  \end{equation*}
  (the inward arrows are indeed injective \cite[Pushout theorem 2.54$^*$]{freyd_abcats}). The single (non-zero) element $m$ that is the common image of $m_{\pm}\in M_{\pm}$ belongs to both $I_{\pm} M$ and is thus annihilated by both restrictions to the two interval halves. 
\end{example}


\begin{lemma}\label{le:whenquotflat}
  Let $Y\subseteq X$ be an embedding of compact Hausdorff spaces. The following conditions are equivalent.
  \begin{enumerate}[(a),wide]
  \item\label{item:le:whenquotflat:flat} The quotient $C(X)\xrightarrowdbl{\pi}C(Y)$ is flat as a $C(X)$-module.

  \item\label{item:le:whenquotflat:idsum} For every $f\in I_Y:=\ker\pi$ we have
    \begin{equation}\label{eq:iyann}
      I_Y+\cat{Ann}_{C(X)}(f)=C(X)
      ,\quad
      \cat{Ann}_{C(X)}(f)
      :=
      \left\{g\in C(X)\ |\ gf=0\right\}.
    \end{equation}

  \item\label{item:le:whenquotflat:nbhd0} Every $f\in C(X)$ vanishing on $Y$ vanishes on a neighborhood of $Y$.
  \end{enumerate}
\end{lemma}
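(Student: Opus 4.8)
The plan is to split the equivalence into a purely algebraic part, \Cref{item:le:whenquotflat:flat} $\iff$ \Cref{item:le:whenquotflat:idsum}, valid over any commutative ring, and a topological part, \Cref{item:le:whenquotflat:idsum} $\iff$ \Cref{item:le:whenquotflat:nbhd0}, specific to $C(X)$. The first is where the actual flatness content sits; the second is a translation via supports and Urysohn separation.

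For the algebraic equivalence I would invoke the standard purity/Tor characterization of flatness of a cyclic quotient (a specialization of the criterion for flat modules; see \cite[\S 4]{lam_lec}): for an ideal $I$ in a commutative ring $R$, the module $R/I$ is flat over $R$ iff $R/I$ is flat iff $\mathrm{Tor}_1^R(R/I,R/J)=(I\cap J)/IJ$ vanishes for every finitely generated ideal $J$. Testing against the principal ideals $J=(f)$ with $f\in I$ (where $I\cap(f)=(f)$ and $I(f)=If$) forces $(f)=If$, i.e. $f=gf$ for some $g\in I$; conversely, once $f\in If$ holds for all $f\in I$ one recovers $I\cap J=IJ$ for \emph{every} $J$ (if $x\in I\cap J$ and $x=gx$ with $g\in I$, then $x=gx\in IJ$), hence all the relevant $\mathrm{Tor}_1$ vanish and $R/I$ is flat. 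Finally, $f=gf$ with $g\in I$ says precisely $1-g\in\cat{Ann}_R(f)$ and $g\in I$, i.e. $1\in I+\cat{Ann}_R(f)$; reading this with $R=C(X)$ and $I=I_Y$ yields \Cref{item:le:whenquotflat:flat} $\iff$ \Cref{item:le:whenquotflat:idsum}.

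For the topological equivalence I would first rewrite the annihilator geometrically: $gf=0$ in $C(X)$ holds iff $g$ vanishes on $\{f\neq 0\}$, hence by continuity on its closure, so that
\[
  \cat{Ann}_{C(X)}(f)=I_{\mathrm{supp}(f)},\qquad \mathrm{supp}(f):=\ol{\{f\neq 0\}}.
\]
Condition \Cref{item:le:whenquotflat:idsum} thus reads $I_Y+I_{\mathrm{supp}(f)}=C(X)$ for every $f\in I_Y$. As $X$ is compact Hausdorff, hence normal, and both $Y$ and $\mathrm{supp}(f)$ are closed, Urysohn's lemma shows $1\in I_Y+I_{\mathrm{supp}(f)}$ iff the two closed sets are disjoint: if $Y\cap\mathrm{supp}(f)=\emptyset$ one picks $g$ with $g|_{\mathrm{supp}(f)}\equiv 0$ and $g|_Y\equiv 1$, whence $1=(1-g)+g\in I_Y+I_{\mathrm{supp}(f)}$, while conversely a relation $1=h+g$ with $h|_Y\equiv 0$ and $g|_{\mathrm{supp}(f)}\equiv 0$ is impossible at any common point. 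So \Cref{item:le:whenquotflat:idsum} is equivalent to $Y\cap\mathrm{supp}(f)=\emptyset$ for all $f\in I_Y$, and I would close the loop against \Cref{item:le:whenquotflat:nbhd0}: if $Y\cap\mathrm{supp}(f)=\emptyset$ then $U:=X\setminus\mathrm{supp}(f)$ is an open neighborhood of $Y$ on which $f$ vanishes; conversely, if $f$ vanishes on an open $U\supseteq Y$ then $\{f\neq 0\}\subseteq X\setminus U$ lies in a closed set, so $\mathrm{supp}(f)\subseteq X\setminus U$ is disjoint from $Y$.

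The genuinely substantive input is the flatness criterion of the first step; everything after it is bookkeeping. The only point that requires mild care is the distinction between the open nonvanishing locus $\{f\neq 0\}$ and its closure $\mathrm{supp}(f)$: flatness naturally produces the \emph{closed} separation statement $Y\cap\mathrm{supp}(f)=\emptyset$, whereas \Cref{item:le:whenquotflat:nbhd0} is phrased through an open neighborhood, and reconciling the two is exactly the passage through $\mathrm{supp}(f)$ and its complement carried out above.
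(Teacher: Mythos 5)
Your proposal is correct and structurally identical to the paper's proof: the same decomposition into the algebraic equivalence (a) $\Leftrightarrow$ (b) and the topological one (b) $\Leftrightarrow$ (c), with the latter handled exactly as in the paper via $\cat{Ann}_{C(X)}(f)=I_{\mathrm{supp}(f)}$ and the translation of \Cref{eq:iyann} into the disjointness $Y\cap\mathrm{supp}(f)=\emptyset$ (your Urysohn argument just makes explicit what the paper leaves implicit there). The only difference is that for (a) $\Leftrightarrow$ (b) the paper simply cites \cite[Corollary 3]{zbMATH03340334}, a consequence of the equational criterion for flatness \cite[Theorem 4.24]{lam_lec}, whereas you re-derive that characterization of flat cyclic modules inline from the $\mathrm{Tor}_1$/finitely-generated-ideal test; your derivation (flatness iff $f\in I_Yf$ for all $f\in I_Y$, which is precisely $1\in I_Y+\cat{Ann}_{C(X)}(f)$) is correct and makes the lemma self-contained where the paper delegates to a reference.
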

\begin{proof}
  The equivalence \Cref{item:le:whenquotflat:flat} $\Leftrightarrow$ \Cref{item:le:whenquotflat:idsum} is a direct translation to the present setup of \cite[Corollary 3]{zbMATH03340334} (itself a consequence of the {\it equational criterion} for flatness \cite[Theorem 4.24]{lam_lec}).

  \Cref{item:le:whenquotflat:idsum} $\Leftrightarrow$ \Cref{item:le:whenquotflat:nbhd0}, on the other hand, follows from
  \begin{equation*}
    \cat{Ann}_{C(X)}(f)
    =
    I_{\mathrm{supp}(f)}
    ,\quad
    \mathrm{supp}(f):=\overline{\left\{\text{non-zero locus of }f\right\}}:
  \end{equation*}
  \Cref{eq:iyann} reads
  \begin{equation*}
    I_{Y\cap\; \mathrm{supp}(f)}
    =
    I_Y+I_{\mathrm{supp}(f)}=C(X),
  \end{equation*}
  or just plain $Y\cap \mathrm{supp}(f)=\emptyset$. 
\end{proof}

The conditions of \Cref{le:whenquotflat} of course obtain if $Y\subseteq X$ happens to be clopen (for then $C(Y)\cong I_{X\setminus Y}$ is in fact a summand of $C(X)$ and hence also $C(X)$-projective), but not only then. The paradigmatic instance of this is perhaps the following. 

\begin{example}\label{ex:omegaintrv}
  Take for $Y\subseteq X$ the embedding
  \begin{equation*}
    \{\Omega\}
    \subset
    [0,\Omega]
    ,\quad
    \Omega:=\text{first uncountable ordinal}
  \end{equation*}
  of \cite[Example 43]{ss_countertop} (with $[0,\Omega]$ equipped with the {\it order topology} \cite[Example 43]{ss_countertop}). Every function vanishing at $\Omega$ vanishes on an entire neighborhood thereof by \cite[Example 43, item 12]{ss_countertop}. By \Cref{le:whenquotflat}, the quotient $C([0,\Omega])\xrightarrowdbl{\text{evaluation at $\Omega$}}\bC$ is flat. 
\end{example}


Returning to the tensor products of the present section's title, recall the bifunctor
\begin{equation*}
  \text{bundle pair }(\cE,\cF)\text{ over $X$}
  \xmapsto{\quad}
  \cE\wo \cF
  =
  \cE\wo_X\cF
\end{equation*}
of \cite[p.445]{zbMATH04134853} (where the symbol is $\astrosun_X$ instead; based on \cite[Theorem 2.1]{zbMATH03663842}). As the notation suggests, the fiber at $x\in X$ is the projective Banach tensor product $\cE_x\wo \cF_x$. The module (as opposed to bundle) side of the picture is as follows ($X$ being assumed compact Hausdorff throughout):
\begin{itemize}[wide]

\item There is an adjunction \cite[Theorems 2.5 and 2.6 and interlined discussion]{dg_ban-bdl}
  \begin{equation*}
    \begin{tikzpicture}[>=stealth,auto,baseline=(current  bounding  box.center)]
      \path[anchor=base] 
      (0,0) node (l) {$\text{(H) bundles on $X$}=:\cat{Bun}_X$}
      +(8,0) node (r) {$\tensor*[_{C(X)}]{\cat{Ban}}{}$}
      +(4,0) node (m) {$\top$}
      ;
      \draw[->] (l) to[bend left=16] node[pos=.5,auto] {$\scriptstyle \Gamma(\bullet)$} (r);
      \draw[->] (r) to[bend left=16] node[pos=.5,auto] {$\scriptstyle \cE_{\bullet}$} (l);
    \end{tikzpicture}
  \end{equation*}
  (the narrow end of `$\top$' pointing towards the left adjoint, per standard notation: \cite[Proposition 3.4.1]{brcx_hndbk-1}, \cite[Examples 19.4]{ahs}, etc.). By \cite[p.44, equation (2.1)]{dg_ban-bdl} the right adjoint $\Gamma(\bullet)$ is {\it fully faithful} or, equivalently \cite[Proposition 3.4.1]{brcx_hndbk-1}, the left-hand-based loop is the identity.
  
\item It follows that that fully-faithful global-section functor implements an equivalence \cite[Scholium 6.7]{hk_shv-bdl} between bundles on the one hand and the functor's essential image on the other: the full subcategory
  \begin{equation}\label{eq:lcban2ban}
    \tensor*[_{C(X)}]{\cat{lcBan}}{}
    \lhook\joinrel\xrightarrow{\quad}
    \tensor*[_{C(X)}]{\cat{Ban}}{}
  \end{equation}
   of {\it (locally) (C(X)-)convex} Banach $C(X)$-modules in the sense of \cite[p.40, pre Theorem 2.5]{dg_ban-bdl}, \cite[Definition 7.10]{gierz_bdls}, \cite[\S 6.1]{hk_shv-bdl}, and so on:
  \begin{equation*}
    \|fm+(1-f)m'\|\le \max\left(\|m\|,\ \|m'\|\right)
    ,\quad
    \forall X\xrightarrow[\quad\text{continuous}\quad]{f}[0,1]
    \text{ and }
    m,m'\in M.
  \end{equation*}

\item By \cite[Theorem 1.1]{zbMATH04134853} the (full) subcategory \Cref{eq:lcban2ban} is {\it reflective} in the sense \cite[Definition 3.5.2]{brcx_hndbk-1} that the embedding has a left adjoint
  \begin{equation*}
    \tensor*[_{C(X)}]{\cat{Ban}}{}
    \xrightarrow{\quad\text{{\it Gelfand functor }$\cat{lc}$}\quad}
    \tensor*[_{C(X)}]{\cat{lcBan}}{}
    \quad
    \left(\text{the $\cG$ of \cite[p.435]{zbMATH04134853}}\right).
  \end{equation*}
  The functor $\cat{lc}$ implements a kind of ``universal convexification'' $M\to \cat{lc}(M)$ for an arbitrary Banach $C(X)$-module. 
\item And finally, the bundle / convex-module correspondence identifies $\cE\wo_X \cF$ with
  \begin{equation}\label{eq:lcwo}
    \Gamma(\cE\wo_X\cF)
    \cong
    \Gamma(\cE)\tensor[_{lc}]{\wo}{_{C(X)}}\Gamma(\cF)
    :=
    \cat{lc}\left(\Gamma(\cE)\tensor[]{\wo}{_{C(X)}}\Gamma(\cF)\right).
  \end{equation}
\end{itemize}

The following statement is phrased along the lines of \Cref{le:fincov}, with the same intention of rendering it independent of prior work; by \cite[Theorem 1.11]{2405.14518v1} the hypothesis could be packaged more economically as the requirement that the bundles be topologically f.g.

\begin{theorem}\label{th:tensidentify}
  Let $\cE$ and $\cF$ be two subhomogeneous (F) Banach bundles over compact Hausdorff $X$, conditionally f.t. and with $G_{\delta}$ $X_{\le d}$.

  The canonical $\tensor*[_{C(X)}]{\cat{Ban}}{}$-morphism  
  \begin{equation*}
    \Gamma(\cE)\wo_{C(X)}\Gamma(\cF)
    \xrightarrow{\quad}
    \Gamma\left(\cE\wo_X\cF\right)
  \end{equation*}
  is bijective. 
\end{theorem}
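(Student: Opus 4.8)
The plan is to identify the canonical map with the unit of the convexification reflection and then check it is an isomorphism after localizing on a finite closed cover. By \Cref{eq:lcwo} the target is $\cat{lc}\left(\Gamma(\cE)\wo_{C(X)}\Gamma(\cF)\right)$, and under this identification the canonical map is precisely the reflection unit $M\to\cat{lc}(M)$ for $M:=\Gamma(\cE)\wo_{C(X)}\Gamma(\cF)$. I would first pass to a finite closed cover: invoking \Cref{cor:le:fincov:fincov}, choose $X=\bigcup_{i=1}^{s}Y_i$ with $\Gamma(\cE|_{Y_i})\cong\bigoplus_j C_0(U_{ij})$ and $\Gamma(\cF|_{Y_i})\cong\bigoplus_k C_0(V_{ik})$ for open $F_{\sigma}$ sets $U_{ij},V_{ik}\subseteq Y_i$. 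Since the restriction functor \Cref{eq:res2y} is the base change $C(Y_i)\wo_{C(X)}-$, it carries $M$ to $\Gamma(\cE|_{Y_i})\wo_{C(Y_i)}\Gamma(\cF|_{Y_i})$ and carries $\Gamma(\cE\wo_X\cF)$ to $\Gamma\left(\cE|_{Y_i}\wo_{Y_i}\cF|_{Y_i}\right)$, compatibly with the canonical maps; here I use the fibrewise facts that $C(Y_i)\wo_{C(X)}\Gamma(\cG)\cong\Gamma(\cG|_{Y_i})$ and that the bundle tensor product commutes with restriction. By \Cref{cor:injbijclcov} the required bijectivity may therefore be checked one $Y_i$ at a time.

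Next I would split off the summands. The projective tensor product $\wo_{C(Y_i)}$, the convexification $\cat{lc}$ (a left adjoint), and the global-section functor $\Gamma$ all preserve finite direct sums, and the canonical map is natural, so the comparison over $Y_i$ decomposes as a direct sum over pairs $(j,k)$ of the canonical maps attached to single ideal modules. This reduces the theorem to the following local assertion: for open subsets $U,V$ of a compact Hausdorff space $Y$, the multiplication map
\[
  \mu\colon C_0(U)\wo_{C(Y)}C_0(V)\xrightarrow{\quad f\otimes g\,\mapsto\, fg\quad}C_0(U\cap V)
\]
is an isomorphism. Fibrewise $(\cL_U)_x\wo(\cL_V)_x$ equals $\bC$ on $U\cap V$ and $0$ elsewhere, so the target is indeed $\Gamma(\cL_U\wo_Y\cL_V)=C_0(U\cap V)$ and the canonical map is $\mu$; since $C_0(U\cap V)$ is convex, showing $\mu$ is an isometric isomorphism will prove both that the source is convex and that the reflection unit is an isomorphism.

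The crux — and the step I expect to resist most — is the bijectivity of $\mu$. Surjectivity is immediate from Cohen factorization \cite[Theorem 32.22]{hr-2}: each $h\in C_0(U\cap V)$ factors as $h=h_1h_2$ with $h_1,h_2\in C_0(U\cap V)\subseteq C_0(U)\cap C_0(V)$, so $h=\mu(h_1\otimes h_2)$. For injectivity I would exhibit an explicit inverse using a bounded approximate unit $(u_\lambda)$ of $C_0(U)$. Moving the scalar $f\in C(Y)$ across the $C(Y)$-balanced tensor and using $f u_\lambda\to f$ gives the key identity $f\otimes g=\lim_\lambda u_\lambda\otimes(fg)$, which shows that an elementary tensor depends only on its product $fg=\mu(f\otimes g)$. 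Correspondingly, for $h=h_1h_2\in C_0(U\cap V)$ one has $u_\lambda\otimes h=(h_1u_\lambda)\otimes h_2\to h_1\otimes h_2$, so that $\nu(h):=\lim_\lambda u_\lambda\otimes h=h_1\otimes h_2$ is well defined (independent of the factorization) and contractive. A short check then gives $\mu\nu=\id$ and $\nu\mu=\id$, so $\mu$ is an isometric isomorphism.

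Two places call for care but should not present genuine difficulty: the bookkeeping in the first paragraph — that restriction along closed subspaces, the bundle tensor product, and the canonical map are mutually compatible — and, in the last step, the verification that the limits defining $\nu$ converge and are factorization-independent. I note that injectivity of $\mu$ can alternatively be deduced from the strict flatness of the quotients $C(Y)/I_{Y\setminus U}$ \cite[Theorem VII.1.5]{hlmsk_homolog} (cf. \Cref{res:notmono}), applied to the restriction sequence $0\to C_0(U\cap V)\to C_0(V)\to C_0\left(V\cap(Y\setminus U)\right)\to 0$, which stays exact after tensoring with $C_0(U)$.
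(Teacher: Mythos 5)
Your proposal is correct and follows essentially the same route as the paper: reduce via the finite closed cover of \Cref{cor:le:fincov:fincov} together with \Cref{cor:injbijclcov} to section modules that are finite direct sums of ideals $C_0(U)$, and then identify $C_0(U)\wo_{C(Y)}C_0(V)$ with the product ideal $C_0(U\cap V)$ via Cohen factorization --- the paper simply cites \cite[Theorem III.3.12]{hlmsk_homolog} for this last step, where you prove it by hand with the approximate-unit inverse $\nu$. One caution: your closing aside deducing injectivity of $\mu$ from strict flatness does not follow from the cited \cite[Theorem VII.1.5]{hlmsk_homolog} as stated (that theorem makes the functor $C(Y\setminus U)\wo_{C(Y)}-$ exact, which is not the same as exactness being preserved when one tensors a sequence whose cokernel term is flat --- the latter needs Tor-type machinery that is delicate for non-admissible Banach sequences), but your main argument never uses this aside, so nothing is lost.
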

\begin{proof}
  \Cref{le:fincov} and \Cref{cor:injbijclcov} reduce the problem to conditionally trivial bundles, in which case the claim will be easy to verify. Consider the strata \Cref{eq:strata} of $\cE$. By \Cref{cor:le:fincov:fincov} (and its proof), upon perhaps refining the finite closed cover, we can assume $\cE$ is a direct sum of
  \begin{itemize}[wide]
  \item a trivial rank-$d_0$ bundle;

  \item and a trivial rank-$(d_1-d_0)$ bundle over $X_{\cE>d_0}$ (vanishing along $X_{\cE=d_0}=X_{\cE\le d_0}$)

  \item and so on, up to a trivial rank-$(d_{k-1}-d_{k-2})$ bundle over $X_{\cE>d_{k-2}}$ vanishing along $X_{\cE\le d_{k-2}}$. 
  \end{itemize}
  The same goes for $\cF$, so all relevant $C(X)$-modules are closed ideals thereof. For these the projective tensor product is nothing but the product:
  \begin{equation*}
    I\wo_{C(X)}J
    \xrightarrow[\quad\cong\quad]{\quad\text{canonical map}\quad}
    IJ
  \end{equation*}
  by (one version of) Cohen factorization again (e.g. \cite[Theorem III.3.12]{hlmsk_homolog}), and the conclusion follows. 
\end{proof}

In particular, linking back to finite generation:

\begin{corollary}\label{cor:eotffg}
  If $\cE$ and $\cF$ are (topologically) f.g. (F) bundles over a compact Hausdorff space $X$ so, respectively, is $\cE\wo_X\cF$.
\end{corollary}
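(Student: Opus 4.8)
The plan is to unpack the statement into its two genuine assertions: that $\cE\wo_X\cF$ is topologically finitely-generated, and that it is again (F). The first is a quick consequence of \Cref{th:tensidentify}; the second, continuity of the fibre norm, is where the real work lies, and I would isolate it as the crux.

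\textbf{Finite generation.} Since $\cE$ and $\cF$ are (F) and topologically f.g., the hypotheses of \Cref{th:tensidentify} hold (via \cite[Theorem 1.11]{2405.14518v1}), so that result identifies $\Gamma(\cE\wo_X\cF)$ with $\Gamma(\cE)\wo_{C(X)}\Gamma(\cF)$. Choosing finite sets $\{s_i\}\subseteq\Gamma(\cE)$ and $\{t_j\}\subseteq\Gamma(\cF)$ with dense $C(X)$-spans, I would check that the $s_i\otimes t_j$ topologically generate the projective tensor product: the image of the algebraic $\Gamma(\cE)\otimes_{C(X)}\Gamma(\cF)$ is dense, a simple tensor $m\otimes n$ is approximable in the $\wo_{C(X)}$-norm by $(\sum_k f_k s_{i_k})\otimes(\sum_l g_l t_{j_l})=\sum_{k,l}f_k g_l\,(s_{i_k}\otimes t_{j_l})$ using joint continuity and $C(X)$-bilinearity of the tensor map, and this is a $C(X)$-combination of the $s_i\otimes t_j$. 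Transporting across the isomorphism shows $\Gamma(\cE\wo_X\cF)$ is topologically f.g., which is exactly what \Cref{def:bdleattrs} asks of the bundle.

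\textbf{(F)-ness.} This does \emph{not} follow formally from the module identification: the isomorphism of \Cref{th:tensidentify} lives in $\tensor*[_{C(X)}]{\cat{Ban}}{^{\infty}}$ (it is bijective but need not be isometric), and (F)-ness is not invariant under non-isometric bundle isomorphism, as \Cref{re:quott2notcont} makes plain by exhibiting a non-(F) bundle linearly homeomorphic to a trivial one. I would argue directly, after the usual simplifications. By \Cref{le:fincov} and \Cref{cor:le:fincov:fincov}, and since continuity of a fibre norm is checkable along nets while every net converging to a point of a finite closed cover has a subnet inside a single patch, it suffices to treat $\cE$ and $\cF$ that are finite direct sums of the ``open-set'' sub-line-bundles $\cL_U$ (fibre $\bC$ over open $U$, zero elsewhere) of trivial bundles. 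Because $\wo_X$ commutes with restriction and distributes over finite direct sums, and $\cL_U\wo_X\cL_V\cong\cL_{U\cap V}$ by the product-of-ideals computation in the proof of \Cref{th:tensidentify}, the total space and fibrewise-linear structure of $\cE\wo_X\cF$ are those of $\bigoplus_{j,l}\cL_{U_j\cap V_l}$; the only point at issue is whether the projective-tensor fibre norm is continuous.

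\textbf{Main obstacle.} The heart of the matter is precisely this continuity of $x\mapsto\|\cdot\|_{\cE_x\wo\cF_x}$ across the strata, where fibre dimensions jump. Upper semicontinuity is automatic for an (H) bundle, so I would concentrate on lower semicontinuity at a point $x_0$ on the boundary of the locus where some summand is active. The observation that makes this work — and whose failure produces the bad example of \Cref{re:quott2notcont} — is that for a continuous section $\sigma$ the components of $\sigma(x)$ along directions \emph{inactive} at $x_0$ must tend to $0$ as $x\to x_0$, since the corresponding coordinate functionals vanish on the limit fibre $\cE_{x_0}\wo\cF_{x_0}$. Finite-dimensionality then closes the argument: the projective tensor norm depends continuously on the boundedly many (by subhomogeneity) continuously-varying factor norms, and augmenting a fibre by coordinates whose values vanish in the limit perturbs the projective norm continuously. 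I expect this finite-dimensional continuity estimate for the projective tensor norm — simultaneously accommodating varying factor norms and jumping dimensions — to be the one step requiring real care; everything else is bookkeeping.
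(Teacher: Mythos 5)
Your reading of the statement is off, and the misreading both deletes a branch you were supposed to prove and inserts one the paper does not prove here. In this paper's conventions the parenthetical ``(topologically)'' generates two parallel claims and ``respectively'' matches conclusion to hypothesis: topologically f.g.\ factors give a topologically f.g.\ tensor product, and \emph{algebraically} f.g.\ factors give an algebraically f.g.\ one. (Compare \Cref{cor:schurfunct}, where the same ``respectively'' occurs with a single bundle $\cE$, so it cannot be pairing anything except the two finite-generation readings; the paper's proof of \Cref{cor:eotffg} speaks explicitly of the ``topological branch'' and the ``purely algebraic side''.) Your first part proves the topological branch by the same route as the paper --- \Cref{th:tensidentify} plus ``generators tensor to generators'' --- but the algebraic branch is entirely absent from your proposal, and it is not formal: the paper deduces it from the surjectivity of the comparison map \Cref{eq:alg2top}, which is part of \cite[Theorem 1.11]{2405.14518v1}; that surjectivity is what lets the finitely many algebraic generators $s_i\otimes t_j$ of $\Gamma(\cE)\otimes_{C(X)}\Gamma(\cF)$ generate $\Gamma(\cE)\wo_{C(X)}\Gamma(\cF)$ algebraically, whence $\Gamma(\cE\wo_X\cF)$ is algebraically f.g.\ via \Cref{th:tensidentify}.

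Conversely, the ``(F)-ness'' you single out as the crux is no part of the paper's proof of this corollary (that proof is purely module-theoretic), and your argument for it fails at both joints. The reduction is illegitimate: \Cref{cor:le:fincov:fincov} identifies $\Gamma(\cE|_{Y_i})$ with $\bigoplus_j C_0(U_{ij})$ only in $\tensor*[_{C(X)}]{\cat{Ban}}{^{\infty}}$, i.e.\ up to bounded, generally non-isometric isomorphism, and by your own opening observation (via \Cref{re:quott2notcont}) such isomorphisms do not preserve (F)-ness; so after passing to sums of line bundles $\cL_U$ nothing about norm continuity survives. Worse, the ``finite-dimensional continuity estimate for the projective tensor norm'' you defer to is false, because the projective norm is not injective: for finite-dimensional $F'\subset E$ and $w\in F'\otimes F'$, the intrinsic norm $\|w\|_{F'\wo F'}$ can strictly exceed $\|w\|_{E\wo E}$. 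Concretely, let $F'$ be a $(1+\varepsilon)$-isomorphic copy of $\ell_2^2$ inside $E=\ell_\infty^N$ and $w=e_1\otimes e_1+e_2\otimes e_2$; Grothendieck's inequality gives $\|w\|_{E\wo E}\le K_G(1+\varepsilon)^2<2\le \|w\|_{F'\wo F'}$ for small $\varepsilon$. Feeding these $E$, $F'$ into \Cref{ex:semicontstillnotf} (say $X=[0,1]$, $x=0$; the vector $v$ there is irrelevant) yields a topologically f.g.\ (F) bundle $\cE$ with fibers $E$ off $x$ and $F'$ at $x$; since the fibers of $\cE\wo_X\cE$ are the intrinsic $\cE_{x'}\wo\cE_{x'}$, the constant section $w$ has norm at most $K_G(1+\varepsilon)^2$ away from $x$ but at least $2$ at $x$ --- upper semicontinuous, not continuous. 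Note this section has \emph{no} component along the directions that die at $x$, so the mechanism you propose (coordinates along inactive directions tending to $0$) cannot detect the failure: what jumps is not the tensor but the supply of norming bilinear forms, which shrinks with the fiber. The step you flag as ``requiring real care'' is therefore not merely unproven but unprovable as stated; the content of the corollary, as the paper proves it, is the two finite-generation statements about section modules, not norm continuity of $\wo_X$.
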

\begin{proof}
  That topological finite generation transports over from $\Gamma(\cE)$ and $\Gamma(\cF)$ to $\Gamma(\cE)\wo_{C(X)}\Gamma(\cF)$ is immediate, hence the topological branch of the claim. As for the plain (purely algebraic) side, recall \cite[Theorem 1.11]{2405.14518v1} that topological finite generation also implies the surjectivity of
  \begin{equation}\label{eq:alg2top}
    (\text{algebraic tensor product})
    \quad
    \Gamma(\cE)\otimes_{C(X)}\Gamma(\cF)
    \xrightarrow{\quad}
    \Gamma(\cE)\wo_{C(X)}\Gamma(\cF),
  \end{equation}
  so that if the two individual tensorands are f.g. so is their projective tensor product. Both versions of the claim now follow from \Cref{th:tensidentify}.
\end{proof}

For compact Hausdorff $X$ we have a number of {\it symmetric monoidal} categories \cite[Definitions 6.1.1 and 6.1.2]{brcx_hndbk-2}:
\begin{itemize}[wide]
\item $\tensor*[]{\cat{Bun}}{_X}$, of (H) Banach bundles, with $\wo_X$ (and the trivial rank-1 bundle as {\it monoidal unit});

\item the equivalent category $\tensor*[_{C(X)}]{\cat{lcBan}}{}$ of convex Banach $C(X)$-modules with the tensor product $\tensor[_{lc}]{\wo}{_{C(X)}}$ of \Cref{eq:lcwo}; the equivalence
  \begin{equation}\label{eq:bun2mod}
    \tensor*[]{\cat{Bun}}{_X^{\bullet}}
    \xrightarrow[\quad\simeq\quad]{\Gamma(-)}
    \tensor*[_{C(X)}]{\cat{lcBan}}{^{\bullet}}
    ,\quad
    \bullet\in\{\text{blank},\ \infty\}
  \end{equation}
  of \cite[Theorem 2.6]{dg_ban-bdl} is by the very definition of $\tensor[_{lc}]{\wo}{_{C(X)}}$ symmetric monoidal (as a functor, i.e. {\it braided monoidal} in the sense of \cite[\S XI.2]{mcl_2e}).
\end{itemize}

In addition, a reformulation of \Cref{cor:eotffg} gives

\begin{corollary}\label{cor:symmmon}
  For compact Hausdorff $X$ the full subcategory
  \begin{equation*}
    \tensor*[^{\cat{(t)fg}}^{(F)}]{\cat{Bun}}{_X}
    \quad\subset\quad
    \tensor*[]{\cat{Bun}}{_X}
  \end{equation*}
  of (topologically) f.g. (F) Banach bundles is closed under $\wo_X$, and hence inherits a symmetric monoidal structure from the larger ambient category.  \qedhere
\end{corollary}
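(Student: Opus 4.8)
The plan is to obtain \Cref{cor:symmmon} as a purely formal consequence of \Cref{cor:eotffg}, together with the general principle that a full subcategory of a symmetric monoidal category which contains the monoidal unit and is closed under the tensor product is itself a symmetric monoidal subcategory. The analytic content has already been packaged into \Cref{cor:eotffg}, so nothing substantive should remain.

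First I would recall, from the discussion immediately preceding the statement, that $\tensor*[]{\cat{Bun}}{_X}$ has been equipped with a symmetric monoidal structure whose product is $\wo_X$ and whose monoidal unit is the trivial rank-$1$ bundle $X\times\bC$. To show that the full subcategory $\tensor*[^{\cat{(t)fg}}^{(F)}]{\cat{Bun}}{_X}$ is a symmetric monoidal subcategory, it then suffices to check two things: (i) the subcategory is closed under $\wo_X$, and (ii) it contains the monoidal unit. Closure is exactly the content of \Cref{cor:eotffg}, valid --- as recorded there --- in both the topological and the purely algebraic readings of ``f.g.''; note that this closure holds on the nose, since $\cE\wo_X\cF$ is a specific bundle and \Cref{cor:eotffg} asserts it is again (F) and f.g. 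For the unit, the trivial rank-$1$ bundle is manifestly (F) --- its fiber-wise norm is constant, hence continuous --- and its section space $C(X)$ is generated over itself by the constant function $1$; it is thus algebraically, a fortiori topologically, f.g.

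Granting (i) and (ii), the rest of the structure descends without further verification. The associator, the two unitors, and the braiding of $\tensor*[]{\cat{Bun}}{_X}$ are the components, at triples and pairs of bundles, of natural isomorphisms; when the arguments lie in the full subcategory, closure under $\wo_X$ returns their sources and targets to the subcategory, and fullness then places the components themselves there. The coherence axioms (pentagon, triangle, hexagon) are identities already holding in $\tensor*[]{\cat{Bun}}{_X}$ and so persist verbatim among the restricted data.

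I do not anticipate any genuine obstacle here: the only step carrying mathematical weight is the closure assertion, and that is precisely \Cref{cor:eotffg}; everything else is the routine bookkeeping confirming that a full, unit-containing, product-closed subcategory of a symmetric monoidal category inherits the ambient symmetric monoidal structure. If anything merits a line of care, it is simply making explicit that membership of the unit must be checked separately from closure under the product --- but this is immediate from the triviality of $X\times\bC$.
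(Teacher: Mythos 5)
Your proposal is correct and matches the paper's treatment: the paper states \Cref{cor:symmmon} as an immediate reformulation of \Cref{cor:eotffg} (hence the absence of a separate proof), exactly the reduction you make. Your additional bookkeeping --- checking that the trivial rank-$1$ bundle lies in the subcategory and that the coherence data restrict along a full, product-closed subcategory --- is the routine verification the paper leaves implicit.
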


\begin{remark}\label{re:indepofth}
  We know from \Cref{th:loctrivfg} that just plain f.g. bundles are locally trivial, but \Cref{cor:symmmon} does not rely on that earlier result. 
\end{remark}

Like any symmetric monoidal category linear over a characteristic-0 field, $\tensor*[]{\cat{Bun}}{_X}$ comes equipped with {\it Schur (endo)functors} \cite[\S 6.1]{fh_rep-th}
\begin{equation*}
  \cE
  \xmapsto{\quad}
  \bS_{\lambda}\cE
  :=
  \left(V_{\lambda}\otimes \cE^{\otimes n}\right)^{S_n}
  :=
  \text{$S_n$-invariants},
\end{equation*}
where
\begin{itemize}[wide]
\item $S_n$ is the $n$-symbol symmetric group for $n\in \bZ_{>0}$;

\item $\lambda=(\lambda_1\ge \lambda_2\ge \cdots)$ is a {\it partition} \cite[\S 4.1]{fh_rep-th} of $n$ (i.e. $\sum \lambda_i=n$);

\item $V_{\lambda}$ is the corresponding irreducible $S_n$-representation, per the usual \cite[Theorem 4.3]{fh_rep-th} classification;

\item and $S_n$ operates on $\cE^{\otimes n}$ ($n^{th}$ tensor power with respect to $\wo_X$) by permuting tensorands and thence on $V_{\lambda}\otimes \cE^{\otimes n}$ in the obvious fashion. 
\end{itemize}

\Cref{cor:symmmon} now also implies

\begin{corollary}\label{cor:schurfunct}
  If $\cE\xrightarrowdbl{}X$ is a (topologically) f.g. (F) bundle over compact Hausdorff $X$ so, respectively, are the Schur images $\bS_{\lambda}\cE$ for partitions $\lambda$.  \qedhere
\end{corollary}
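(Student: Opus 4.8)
The plan is to realize $\bS_\lambda\cE$ as a retract of an iterated tensor power and to show that the class of (topologically) f.g. (F) bundles is closed under tensor powers, finite direct sums, and splitting of idempotents. First I would invoke \Cref{cor:symmmon}: since the subcategory is closed under $\wo_X$, the $n$-fold tensor power $\cE^{\otimes n}$ (with respect to $\wo_X$) is again (topologically) f.g. (F), and the purely algebraic branch is handled identically via the algebraic half of \Cref{cor:eotffg}. Next, tensoring with the fixed finite-dimensional space $V_\lambda$ amounts fiber-wise to taking the direct sum of $\dim V_\lambda$ copies, so $V_\lambda\otimes\cE^{\otimes n}\cong(\cE^{\otimes n})^{\oplus\dim V_\lambda}=:\cG$ is a finite direct sum; this is still (topologically or algebraically) f.g. (F), its norm being continuous and $\Gamma(\cG)=\Gamma(\cE^{\otimes n})^{\oplus\dim V_\lambda}$ being f.g. over $C(X)$ whenever $\Gamma(\cE^{\otimes n})$ is.

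Since $\bC$ has characteristic $0$, I would then present the $S_n$-invariants defining $\bS_\lambda\cE$ as the image of the averaging idempotent $e:=\tfrac{1}{n!}\sum_{g\in S_n} g$, a fiber-wise-linear continuous (though not contractive) endomorphism of $\cG$, i.e. a morphism in the $\infty$-version of the bundle category. Thus $\bS_\lambda\cE=\im e$, and the whole corollary reduces to the claim that the image of such an idempotent on a (topologically) f.g. (F) bundle is again one.

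To establish that claim I would argue that $\im e$ is an (F) subbundle as follows. First, $\coprod_x\im e_x=\{s\in\cG : e(s)=s\}$ is the fixed-point locus of the continuous self-map $e$ and is therefore closed in the (Hausdorff) total space of $\cG$. Second, given $x_0\in X$ and $s_0\in\im e_{x_0}$, I would choose any $\tau\in\Gamma(\cG)$ with $\tau(x_0)=s_0$ (possible since $\cG$ is (F)); then $e\circ\tau\in\Gamma(\cG)$ takes all its values in $\im e$ and satisfies $(e\circ\tau)(x_0)=e_{x_0}(s_0)=s_0$. Hence $\im e$ is fiber-wise closed, closed in $\cG$, and admits enough continuous sections, so by the standard criterion (\cite[\S II.17, Exercise 41]{fd_bdl-1}) it is a subbundle of $\cG$; as a subbundle of an (F) bundle it carries the restricted, hence continuous, norm, so it is itself (F). Finally, idempotency of $e$ yields a bundle splitting $\cG\cong\im e\oplus\im(1-e)$, whence $\Gamma(\bS_\lambda\cE)=\im\Gamma(e)$ is a direct summand, in particular a quotient, of the (topologically or algebraically) f.g. $C(X)$-module $\Gamma(\cG)$ and is therefore again f.g. in the corresponding sense, closing both branches.

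The hard part will be precisely this retract step: verifying that the fiber-wise images $\im e_x$ genuinely assemble into a \emph{continuous} Banach subbundle rather than merely an (H) bundle whose norm jumps. The two ingredients that make it work, closedness of the fixed-point locus and the production of $\im e$-valued sections through prescribed points by postcomposing with $e$, are exactly what the subbundle criterion demands; once they are in place, continuity of the norm comes for free by restriction from $\cG$, in contrast with \Cref{ex:semicontstillnotf}, where it is a \emph{quotient} rather than a sub that manufactures a discontinuous norm.
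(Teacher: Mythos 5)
Your proof is correct and takes the route the paper intends: the paper offers no separate argument at all, simply declaring the corollary an immediate consequence of \Cref{cor:symmmon} (closure of the (topologically) f.g. (F) bundles under $\wo_X$). The one step the paper leaves entirely implicit --- that the $S_n$-invariants, i.e.\ the image of the averaging idempotent $\tfrac{1}{n!}\sum_{g\in S_n}g$, again form a (topologically) f.g. (F) bundle --- is precisely the retract-and-subbundle verification you carry out (fiber-wise closed images, sections through prescribed points via postcomposition with $e$, norm continuity by restriction, and f.g.-ness of the direct summand $\im\Gamma(e)$), so your write-up fleshes out the paper's one-line deduction rather than diverging from it.
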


\begin{remark}\label{re:altpf}
  \Cref{cor:schurfunct} applies in particular to {\it symmetric} and {\it exterior powers}:
  \begin{equation*}
    S^n\cE:=\bS_{(n)}\cE
    \quad\text{and}\quad
    \bigwedge^n\cE:=\bS_{(1,1,\cdots)}\cE
    \quad\text{respectively}
    \quad\text{\cite[(6.1) and (6.2)]{fh_rep-th}}.
  \end{equation*}
  This observation affords an alternative approach to part of the proof of \Cref{th:loctrivfg} (hence the relevance of having the present material available independently of that theorem: \Cref{re:indepofth}): the transition between steps \Cref{item:th:loctrivfg:2str} and \Cref{item:th:loctrivfg:01str} in that proof can be effected by the substitution $\cE\mapsto \bigwedge^{d_1}\cE$ for $\cE$ with fiber dimensions $d_0<d_1$. The exterior power is again f.g. by \Cref{cor:schurfunct}, and furthermore
  \begin{equation*}
    X_{\bigwedge^{d_1}\cE=i} = X_{\cE=i}
    ,\quad
    i=0,1.
  \end{equation*}
  For the purpose of proving these sets clopen in $X$, then, working with one bundle is as effective as working with the other. 
\end{remark}

The surjectivity of the map \Cref{eq:alg2top} was crucial to the proof of \Cref{cor:eotffg}. More is true: \Cref{th:algeqtop} confirms the affirmative answer to the obvious question. Recall \cite[Problem 17I]{wil_top} that {\it $\sigma$-compact} spaces are those expressible as countable unions of compact subspaces. 

\begin{theorem}\label{th:algeqtop}
  Let $X$ be a locally compact $\sigma$-compact Hausdorff space and $\cE$, $\cF$ two subhomogeneous, conditionally f.t. (F) bundles with $G_{\delta}$ $X_{\le d}$.

  The morphisms
  \begin{equation}\label{eq:allmor}
    (\text{algebraic tensor product})
    \quad
    \Gamma_0(\cE)\otimes_{C_0(X)}\Gamma_0(\cF)
    \xrightarrow{\quad}
    \Gamma_0(\cE)\wo_{C_0(X)}\Gamma_0(\cF)
    \xrightarrow[\cong]{\quad}
    \Gamma_0(\cE\wo_X \cF)
  \end{equation}
  are then all bijective.
\end{theorem}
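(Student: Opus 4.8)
The plan is to bootstrap the compact-base result \Cref{th:tensidentify} to the locally compact setting, then to funnel everything into a single statement about tensor products of ideals that can be settled by Cohen factorization and a flatness computation.

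First I would remove the non-compactness by passing to the one-point compactification $X^+ = X\cup\{\infty\}$, extending $\cE,\cF$ to (F) bundles $\cE^+,\cF^+$ with zero fiber over $\infty$. Then $\Gamma(\cE^+) = \Gamma_0(\cE)$ and likewise for $\cF$, and since $C(X^+)$ is the unitization of $C_0(X)$ while the section modules are non-degenerate, the tensor products $\otimes_{C(X^+)}$ and $\wo_{C(X^+)}$ agree with their $C_0(X)$-analogues. This is exactly where $\sigma$-compactness is used: it is equivalent to $\{\infty\}$ being a $G_{\delta}$ in $X^+$, which (intersecting a decreasing neighborhood base of $\infty$ with the decreasing opens cutting out $X_{\le d}$) keeps $X^+_{\le d} = X_{\le d}\cup\{\infty\}$ a $G_{\delta}$; thus $\cE^+,\cF^+$ remain subhomogeneous, conditionally f.t. and with $G_{\delta}$ sublevel sets, so \Cref{th:tensidentify} applies on $X^+$ and supplies the middle isomorphism of \eqref{eq:allmor}. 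It remains to treat the first arrow.

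For the first arrow I would rerun the reduction of \Cref{th:tensidentify}: \Cref{le:fincov} together with \Cref{cor:injbijclcov} reduces bijectivity to conditionally trivial $\cE,\cF$, and \Cref{cor:le:fincov:fincov} then presents the section modules as finite direct sums of ideals $C_0(U)$, $C_0(V)$ (with $U,V$ open $F_{\sigma}$ in the relevant compact patch). As the algebraic tensor product and $\Gamma_0(\cE\wo_X\cF)$ both distribute over these finite sums, and fiberwise $\bC\wo\bC=\bC$ over $U\cap V$, the whole theorem collapses to the claim that multiplication $C_0(U)\otimes_{C_0(X)}C_0(V)\to C_0(U\cap V)$ is bijective. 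Surjectivity is Cohen factorization \cite[Theorem 32.22]{hr-2}: any $w\in C_0(U\cap V)$ splits as $w = w_1w_2$ with $w_1\in C_0(U\cap V)\subseteq C_0(U)$ and $w_2\in C_0(V)$.

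The main obstacle is injectivity, i.e. the vanishing of the potential $\mathrm{Tor}$-discrepancy between the algebraic and completed module tensor products. I expect to clear it by showing that $C_0(U)$ is a flat $C_0(X)$-module for every open $U$; flatness then turns the inclusion $C_0(V)\hookrightarrow C_0(X)$ into an injection $C_0(U)\otimes_{C_0(X)}C_0(V)\hookrightarrow C_0(U)\otimes_{C_0(X)}C_0(X) = C_0(U)$ whose underlying map is precisely our multiplication. Flatness I would verify through the equational criterion \cite[Theorem 4.24]{lam_lec}: given a relation $\sum_{i=1}^n a_im_i=0$ with $a_i\in C_0(X)$, $m_i\in C_0(U)$, set $d:=\bigl(\sum_k|m_k|^2\bigr)^{1/2}\in C_0(U)$ and take as new generators $y_j:=m_j/d^{1/2}$, which lie in $C_0(U)$ since $|y_j|\le d^{1/2}$, together with coefficients $b_{ij}:=m_i\overline{m_j}/d^{3/2}$, which lie in $C_0(X)$ since $|b_{ij}|\le\tfrac12 d^{1/2}$. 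Continuous functional calculus (the half-power normalization is what secures continuity across the boundary $\{d=0\}$) then gives $\sum_j b_{ij}y_j = m_i d^{-2}\sum_k|m_k|^2 = m_i$ and, crucially, $\sum_i a_ib_{ij}=\overline{m_j}\,d^{-3/2}\sum_i a_im_i=0$, verifying the criterion. With flatness in hand the core ideal statement follows, and the three maps of \eqref{eq:allmor} are all bijective.
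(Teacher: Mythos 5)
Your one-point-compactification step and your handling of the middle isomorphism of \Cref{eq:allmor} match the paper's own reduction (\Cref{re:th:algeqtop:clc}), and your terminal computations are in fact correct: the equational criterion \cite[Theorem 4.24]{lam_lec} with $y_j=m_j/d^{1/2}$ and $b_{ij}=m_i\overline{m_j}/d^{3/2}$ (the half-power bounds $|y_j|\le d^{1/2}$, $|b_{ij}|\le \tfrac12 d^{1/2}$ do secure continuity across $\{d=0\}$) does show $C_0(U)$ is flat, and hence that $C_0(U)\otimes_{C_0(X)}C_0(V)\to C_0(U\cap V)$ is bijective. The proposal breaks, however, at the step meant to make those computations relevant: you invoke \Cref{cor:injbijclcov} to check bijectivity of the \emph{first} arrow of \Cref{eq:allmor} patch-by-patch on a finite closed cover, and that corollary does not apply to this map. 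Its proof runs through \Cref{pr:pullbclcov}, which identifies a module with the limit of its restrictions only for objects of $\tensor*[_{C(X)}]{\cat{Ban}}{}$; the mention of $\tensor*[_{C(X)}]{\cat{Mod}}{}$ in \Cref{cor:injbijclcov} refers to the \emph{morphism} being merely $C(X)$-linear, not to the objects being arbitrary algebraic modules. The domain $\Gamma_0(\cE)\otimes_{C_0(X)}\Gamma_0(\cF)$ is a plain algebraic module---it is precisely the object whose agreement with a Banach module is at stake---and for plain algebraic modules the gluing principle you need is \emph{false}: \Cref{res:notmono}(1) and \Cref{ex:notmono} exhibit a $C([-1,1])$-module with a nonzero element annihilated by restriction to both halves of a two-patch closed cover.

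Concretely, if $u$ lies in the kernel of the first arrow, the patch-level ideal statement only tells you that $u$ restricts to zero on each $Y_i$, i.e. that $u\in\bigcap_i I_{Y_i}\cdot\left(\Gamma_0(\cE)\otimes_{C_0(X)}\Gamma_0(\cF)\right)$; concluding $u=0$ from this is exactly the gluing property that \Cref{ex:notmono} refutes for algebraic modules, and which for this particular module is essentially equivalent to the theorem itself (it holds once the algebraic tensor product is known to coincide with the Banach one). The paper's proof is structured precisely to avoid this circularity: injectivity is established first for locally trivial $\cE$ and $\cF$ via Swan's theorem \cite[Theorem 2]{zbMATH03179258} (adapted to $\sigma$-compact locally compact bases), and then propagated by induction on the number of strata---a kernel element vanishes over the lowest stratum $Y=X_{\cE=d_0}$ by the locally trivial case, hence lies in $I_Y\cdot(\cdots)$, hence by Cohen factorization \cite[Theorem 32.22]{hr-2} comes from $\Gamma_0(\cE|_U)\otimes_{C_0(U)}\Gamma_0(\cF|_U)$ for $U=X_{\cE>d_0}$, where fewer fiber dimensions occur. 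Your flatness computation could survive as an ingredient in such an argument, but it must be mounted on this kind of stratum induction (pushing kernel elements into open restrictions) rather than on closed-cover gluing.
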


\begin{remark}\label{re:th:algeqtop:clc}
  The statement refers to locally compact spaces in order to facilitate induction on the number of strata. Note however we can pass back and forth between the locally compact and compact versions of the problem: the former setup is plainly broader, but it can also be reduced to compact spaces by
  \begin{itemize}[wide]
  \item substituting the {\it one-point compactification} \cite[Definition 19.2]{wil_top} $X^+\supseteq X$ ($X$ itself if already compact, $X\sqcup\{\text{an extra point}\}$ otherwise) for $X$;

  \item and extending the bundles $\cE\mapsto \cE^+$, $\cF\mapsto \cF^+$ by 0 at the extraneous point;

  \item whereupon $\Gamma_0(\cE)=\Gamma(\cE^+)$, etc.
  \end{itemize}
  By the same token, \Cref{cor:eotffg} is valid (and will be taken for granted) for locally compact $\sigma$-compact base spaces: this is what justifies the `$\cong$' symbol in the statement of \Cref{th:algeqtop}. 
\end{remark}

\pf{th:algeqtop}
\begin{th:algeqtop}
  Surjectivity is settled by (the proof of) \Cref{cor:eotffg}, via \Cref{re:th:algeqtop:clc} (which transports the compact-base version over to the present seemingly-broader setup). It is thus {\it in}jectivity that requires more attention.

  \begin{enumerate}[(I),wide]
  \item\label{item:th:algeqtop:2triv} {\bf Locally trivial $\cE$ and $\cF$.} In that case the claim follows from Swan's theorem \cite[Theorem 2]{zbMATH03179258}, classifying section spaces as precisely the projective f.g. modules over $C(X)$ for compact Hausdorff $X$. That result goes through under the present assumptions, with $C_0(X)$ and $\Gamma_0(\cE)$ in place of $C(X)$ and $\Gamma(\cE)$ respectively:
    \begin{itemize}[wide]
    \item \cite[\S 1]{zbMATH03179258} requires only paracompactness, valid \cite[Theorem 20.12(a)]{wil_top} for $\sigma$-compact locally compact Hausdorff spaces;

    \item \cite[\S 2]{zbMATH03179258} requires normality, a consequence of paracompactness \cite[Theorem 20.10]{wil_top};

    \item and finally, compactness itself is only needed in \cite[\S 3, Lemma 5]{zbMATH03179258} to conclude that $X$ is can be covered with finitely many open patches where local sections form a base in every fiber; the finite-type assumption provides that ingredient.
    \end{itemize}

  \item\label{item:th:algeqtop:1triv} {\bf Locally trivial $\cE$ or $\cF$.} Assume $\cF$ locally trivial, to fix ideas, and suppose $\cE$ has $k$ fiber dimensions $d_0<d_1<\cdots<d_{k-1}$. Consider also an element
    \begin{equation}\label{eq:annu}
      u
      =
      \sum_i \sigma_i\otimes \tau_i
      \in
      \Gamma_0(\cE)\otimes_{C_0(X)}\Gamma_0(\cF)
    \end{equation}
    annihilated by \Cref{eq:allmor}. Step \Cref{item:th:algeqtop:2triv} applied to $X_{\cE=d_0}$ shows that $u$ belongs to
    \begin{equation*}
      \begin{aligned}
        I_{Y}\cdot \left(\Gamma_0(\cE)\otimes_{C_0(X)}\Gamma_0(\cF)\right)
        &=\left(I_Y\cdot \Gamma_0(\cE)\right)\otimes_{C_0(X)}\left(I_Y\cdot \Gamma_0(\cE)\right)
        \\
        &\subseteq \Gamma_0(\cE)\otimes_{C_0(X)}\Gamma_0(\cF)
      \end{aligned}      
    \end{equation*}
    for $ Y:=X_{\cE=d_0}$, equality following once more from Cohen factorization \cite[Theorem 32.22]{hr-2} (every element of $I_Y$ is a product of two such). $u$ is thus the image of an element of
    \begin{equation*}
      \Gamma_0(\cE|_{U})\otimes_{C_0(U)} \Gamma_0(\cF|_{U})
      ,\quad
      U:=X\setminus Y = X_{\cE>d_0},
    \end{equation*}
    and the problem reduces fewer fiber dimensions $d_1<\cdots<d_{k-1}$ by restricting everything in sight to $U$. Induction on $k$ and step \Cref{item:th:algeqtop:2triv} finish the argument. 

  \item\label{item:th:algeqtop:gen} {\bf The general case.} The preceding argument recycles: the tensorands $\sigma_i$ and $\tau_i$ of an element \Cref{eq:annu} in the kernel of \Cref{eq:allmor} can be assumed to vanish on the lowest stratum $X_{\cE=d_0}$ attached to $\cE$ by the previous step, and we can then proceed by induction on the {\it total} number $k+\ell$ of fiber dimensions
    \begin{equation*}
      d_0<\cdots<d_{k-1}     
      \text{ for $\cE$ and }
      d'_0<\cdots<d'_{k-1}
      \text{ for $\cF$}. 
    \end{equation*}
  \end{enumerate}
  This concludes the proof. 
\end{th:algeqtop}

\Cref{th:algeqtop} is one manifestation of agreement between algebraic and analytic structure for section modules of topologically f.g. bundles. That agreement goes further, with such modules exhibiting the type of {\it automatic-continuity} behavior that has seen much attention in the Banach-algebra literature (e.g. \cite[Chapter 5]{dales_autocont} and its numerous references).

\begin{theorem}\label{th:autocont}
  Let $\cE$ be a subhomogeneous (F) Banach bundle over compact Hausdorff $X$, conditionally f.t. and with $G_{\delta}$ $X_{\le d}$. 

  For any Banach $C(X)$-module $E$ (possibly degenerate) a $C(X)$-module morphism $\Gamma(\cE)\to E$ is automatically continuous. 
\end{theorem}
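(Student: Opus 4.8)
The plan is to reduce, in stages, to an automatic-continuity statement for module maps out of a single ideal $C_0(U)\subseteq C(Y)$, and to dispatch that ``building block'' by a separating-space argument.

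First I would dispose of the possible degeneracy of $E$. Writing $p\colon E\to E$ for the bounded idempotent $e\mapsto 1_{C(X)}\cdot e$, its kernel $\{e\in E\ |\ 1_{C(X)}\cdot e=0\}$ carries the zero $C(X)$-action (since $f\cdot e=(f\cdot 1_{C(X)})\cdot e=f\cdot(1_{C(X)}\cdot e)$), while $pE=1_{C(X)}\cdot E$ is a non-degenerate closed complemented submodule. As $\Gamma(\cE)$ is unital, any module map $\phi$ satisfies $\phi(\sigma)=\phi(1_{C(X)}\sigma)=1_{C(X)}\phi(\sigma)\in pE$, so $\phi$ corestricts to $pE$ and its continuity is equivalent to that of the corestriction. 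I may therefore assume $E$ non-degenerate, placing both $\Gamma(\cE)$ and $E$ in $\tensor*[_{C(X)}]{\cat{Ban}}{}$.

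Next I would localize. \Cref{cor:le:fincov:fincov} supplies a finite closed cover $X=\bigcup_i Y_i$ with $\Gamma(\cE|_{Y_i})\cong\bigoplus_j C_0(U_{ij})$. Applying \Cref{pr:pullbclcov} to $\Gamma(\cE)$ and to $E$ at once, both canonical maps into the limits $\varprojlim_{\mathbf i}(-)|_{X_{\mathbf i}}$ over the intersections $X_{\mathbf i}=\bigcap_{i\in\mathbf i}Y_i$ are bounded below. Since $\phi$ intertwines the restriction maps (and section restriction is surjective, \Cref{res:quotnotf}\Cref{item:res:quotnotf:sectquot}, giving $\Gamma(\cE)|_{X_{\mathbf i}}\cong\Gamma(\cE|_{X_{\mathbf i}})$), $\phi$ is continuous as soon as each induced $C(X_{\mathbf i})$-module map $\phi|_{X_{\mathbf i}}\colon\Gamma(\cE|_{X_{\mathbf i}})\to E|_{X_{\mathbf i}}$ is: indeed $\|\phi(\sigma)\|\lesssim\max_{\mathbf i}\|\phi|_{X_{\mathbf i}}(\sigma|_{X_{\mathbf i}})\|\le\bigl(\max_{\mathbf i}\|\phi|_{X_{\mathbf i}}\|\bigr)\max_{\mathbf i}\|\sigma|_{X_{\mathbf i}}\|\lesssim\|\sigma\|$. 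Restricting the decomposition of \Cref{cor:le:fincov:fincov} exhibits each $\Gamma(\cE|_{X_{\mathbf i}})$ again as a finite direct sum of modules $C_0(V)$ with $V$ open $\sigma$-compact, so the theorem comes down to the \emph{building block}: for compact Hausdorff $Y$ and open $\sigma$-compact $U\subseteq Y$, every $C(Y)$-module map $\phi\colon C_0(U)\to E'$ into a non-degenerate Banach $C(Y)$-module is continuous.

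For the building block I would work through the separating space $\fS(\phi):=\{y\in E'\ |\ \exists f_k\to 0,\ \phi(f_k)\to y\}$, a closed $C(Y)$-submodule which by the closed graph theorem vanishes exactly when $\phi$ is continuous. Two observations drive it. First, Cohen factorization \cite[Theorem 32.22]{hr-2} in the $C^*$-algebra $C_0(U)$ writes each $f=gh$ with $g,h\in C_0(U)$, whence $\phi(f)=g\,\phi(h)\in C_0(U)\cdot E'$; thus $\im\phi$, and hence $\fS(\phi)\subseteq\overline{\im\phi}$, lands in the essential submodule $E'_U:=\overline{C_0(U)\cdot E'}$. Because an essential module has no nonzero element annihilated by $C_0(U)$ (a bounded approximate identity $(e_n)\subset C_0(U)$ satisfies $e_ny\to y$ there), it suffices to prove $C_0(U)\cdot\fS(\phi)=0$. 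Second, this last vanishing is where the automatic-continuity machinery for commutative $C^*$-algebras enters: Sinclair's stability lemma \cite[Chapter 5]{dales_autocont} forces $\overline{f_1\cdots f_n\cdot\fS(\phi)}$ to stabilize, and the Bade–Curtis analysis then confines the ``singular support'' of $\fS(\phi)$ to a finite subset of $\overline{U}$; at points of $U$ a local unit in $C_0(U)$ together with essentiality removes the contribution, while at points of $\partial U=\overline{U}\setminus U$ essentiality does so directly, leaving $\fS(\phi)=0$. I expect this final step — pinning down $\fS(\phi)=0$ for the building block via the stability lemma and the finite-singularity-set argument — to be the main obstacle; the reductions of the preceding paragraphs are formal consequences of \Cref{pr:pullbclcov}, \Cref{cor:le:fincov:fincov} and Cohen factorization, whereas the core analytic input is precisely what makes the conclusion fail for merely algebraic module maps.
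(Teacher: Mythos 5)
Your reductions are exactly the paper's: \Cref{cor:le:fincov:fincov} combined with \Cref{pr:pullbclcov} (plus your explicit, and correct, disposal of the possible degeneracy of $E$ and the gluing estimate, which the paper leaves implicit) bring the theorem down to the building block of $C(X)$-module maps out of a single ideal $C_0(U)$. The divergence is in how that building block is handled: the paper simply cites \cite[Corollary 2.9.30(ix)]{dales_autocont} for the ideal $C_0(U)\trianglelefteq C(X)$, whereas you re-derive the automatic continuity by separating-space methods. Your derivation can be made to work, so this is a legitimate (if more laborious) self-contained alternative to the citation; what the paper's route buys is brevity, while yours buys independence from the Bade--Curtis--Sinclair corpus --- in principle.

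Two caveats, one cosmetic and one substantive. Cosmetically, the machinery you invoke is far heavier than needed: once you know $\fS(\phi)\subseteq \overline{C_0(U)\cdot E'}$ (your Cohen-factorization step), everything follows from the local-unit trick alone. For $f\in C_c(U)$, i.e.\ $\mathrm{supp}f=:K\subseteq U$ compact, choose $u\in C_c(U)$ with $u\equiv 1$ on $K$; then for every $h\in C_0(U)$
\begin{equation*}
  f\phi(h)\ =\ \phi(fh)\ =\ \phi\bigl((fh)\,u\bigr)\ =\ (fh)\,\phi(u),
\end{equation*}
so $h\mapsto f\phi(h)$ is bounded and hence $f\cdot\fS(\phi)=0$; since $C_c(U)$ is norm-dense in $C_0(U)$ and the module action on $E'$ is bounded, $C_0(U)\cdot\fS(\phi)=0$, and your essentiality observation (a contractive approximate identity of $C_0(U)$ converges to the identity on $\overline{C_0(U)E'}$) then gives $\fS(\phi)=0$. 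No stability lemma and no finiteness of singular supports enters. Substantively, the one step of your sketch that is not yet a proof is the boundary case: ``at points of $\partial U$ essentiality does so directly'' is too quick. Essentiality of $\fS(\phi)$ by itself does not annihilate the piece of $\fS(\phi)$ localized at $z\in\partial U$; what does is the observation that functions vanishing on a neighborhood of $z$ --- in particular all of $C_c(U)$ --- already kill that piece, combined with density of such functions in $C_0(U)$ and boundedness of the action, and only \emph{then} does essentiality finish. That missing density mechanism is precisely the argument displayed above, so repairing this gap and discarding the stability/Bade--Curtis superstructure come to the same thing.
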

\begin{proof}
  \Cref{cor:le:fincov:fincov} and \Cref{pr:pullbclcov} reduce the problem to bundles $\cE$ with section spaces of the form $C_0(U)$ for open $U\subseteq X$, and for these (purely algebraic) $C(X)$-module morphisms $C_0(U)\to E$ are indeed automatically continuous by \cite[Corollary 2.9.30(ix)]{dales_autocont} applied to the ideal $C_0(U)\trianglelefteq C(X)$.
\end{proof}

In particular, when $E$ too is of the form $\Gamma(\cF)$ for $\cF$ as in the statement (i.e. \cite[Theorem 1.11]{2405.14518v1} topologically f.g.):

\begin{corollary}\label{cor:tfgfullinmod}
  \begin{enumerate}[(1),wide]
  \item\label{item:cor:tfgfullinmod:both} The faithful functor
    \begin{equation*}
      \tensor*[]{\cat{Bun}}{_X^{\infty}}
      \xrightarrow{\quad\Gamma(\bullet)\quad}
      \tensor*[_{C(X)}]{\cat{Mod}}{}
    \end{equation*}
    restricts on either category $\tensor*[^{\cat{(t)fg}}^{(F)}]{\cat{Bun}}{_X^{\infty}}$ (with `$\cat{t}$' present or not) to a symmetric monoidal equivalence onto a {\it full} category of $C(X)$-modules.

  \item\label{item:cor:tfgfullinmod:justproj} More precisely, the preceding item identifies $\tensor*[^{\cat{fg}}^{(F)}]{\cat{Bun}}{_X^{\infty}}$ with the category of projective f.g. $C(X)$-modules.  \qedhere
  \end{enumerate}
\end{corollary}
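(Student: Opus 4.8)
The plan is to bootstrap the desired equivalence from the automatic-continuity statement \Cref{th:autocont}, the symmetric monoidal equivalence \Cref{eq:bun2mod} between $\tensor*[]{\cat{Bun}}{_X^{\infty}}$ and the locally convex Banach modules $\tensor*[_{C(X)}]{\cat{lcBan}}{^{\infty}}$, and the tensor-identification results \Cref{th:tensidentify,th:algeqtop}. Faithfulness of $\Gamma$ is already recorded, so the content of item \Cref{item:cor:tfgfullinmod:both} is \emph{fullness} together with the monoidal compatibility.

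For fullness I would argue in two moves. Let $\phi\colon \Gamma(\cE)\to\Gamma(\cF)$ be an arbitrary $C(X)$-module morphism between section modules of (topologically) f.g. (F) bundles. First, \Cref{th:autocont} applied with $E:=\Gamma(\cF)$ shows $\phi$ is automatically continuous, hence a morphism in $\tensor*[_{C(X)}]{\cat{Ban}}{^{\infty}}$. Second, since section modules are precisely the locally convex Banach modules, $\phi$ is in fact a morphism of $\tensor*[_{C(X)}]{\cat{lcBan}}{^{\infty}}$, so the equivalence \Cref{eq:bun2mod} supplies a (unique) bundle map $\psi\colon\cE\to\cF$ with $\phi=\Gamma(\psi)$. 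Thus the restriction of $\Gamma$ to $\tensor*[^{\cat{(t)fg}}^{(F)}]{\cat{Bun}}{_X^{\infty}}$ is full; being also faithful it is an equivalence onto its essential image, viewed as a full subcategory of $\tensor*[_{C(X)}]{\cat{Mod}}{}$.

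For the monoidal structure, \Cref{cor:symmmon} (equivalently \Cref{cor:eotffg}) ensures the subcategory is closed under $\wo_X$, so it inherits the symmetric monoidal structure of $\tensor*[]{\cat{Bun}}{_X^{\infty}}$, with the trivial rank-$1$ bundle as unit. To realize $\Gamma$ as symmetric monoidal into $\left(\tensor*[_{C(X)}]{\cat{Mod}}{},\ \otimes_{C(X)}\right)$, I would invoke \Cref{th:algeqtop}: the canonical comparison $\Gamma(\cE)\otimes_{C(X)}\Gamma(\cF)\to\Gamma(\cE\wo_X\cF)$ is bijective, identifying the algebraic tensor product with the bundle tensor product. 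These isomorphisms are natural and, being inherited from the already-monoidal equivalence \Cref{eq:bun2mod} (via \Cref{th:tensidentify}), respect the associativity, unit, and symmetry constraints; they therefore assemble into the structure maps of a symmetric monoidal functor, with $\Gamma$ of the unit being $C(X)$.

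Finally, for item \Cref{item:cor:tfgfullinmod:justproj} I would restrict further to the algebraically f.g. bundles. By \Cref{th:loctrivfg} these are exactly the locally trivial finite-rank bundles, whose section modules are precisely the projective f.g. $C(X)$-modules (Swan's theorem), and conversely every projective f.g. module is such a section module; so the essential image of $\tensor*[^{\cat{fg}}^{(F)}]{\cat{Bun}}{_X^{\infty}}$ is exactly the projective f.g. modules, and the full-and-faithful restriction of $\Gamma$ is an equivalence onto that category. The one genuine obstacle in all of this is fullness, and within it the passage from an \emph{algebraic} $C(X)$-linear map to a \emph{continuous} one: this is precisely the work done by \Cref{th:autocont}, and without it an abstract module map need not descend to a bundle morphism. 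Everything else is bookkeeping on top of the pre-existing equivalence and the tensor identifications.
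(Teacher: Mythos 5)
Your proposal is correct and matches the paper's intended argument: the corollary is stated there with no separate proof precisely because fullness follows immediately from \Cref{th:autocont} (applied with $E=\Gamma(\cF)$) together with the equivalence \Cref{eq:bun2mod}, the monoidal claims follow from \Cref{cor:symmmon}, \Cref{th:tensidentify} and \Cref{th:algeqtop}, and item (2) follows from \Cref{th:loctrivfg} combined with Swan's theorem. Your write-up simply makes this chain explicit, and correctly isolates automatic continuity as the one substantive ingredient in passing from algebraic module maps to bundle morphisms.
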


\begin{remarks}\label{res:innerhom}
  \begin{enumerate}[(1),wide]
  \item To further expand on the category-theoretic picture, recall the Banach-module version \cite[\S III.3.9, 4)]{clm_ban-mod} of the {\it hom-tensor adjunction} (e.g. \cite[\S 10.5, Theorem 43]{df_3e}):
    \begin{equation*}
      \tensor*[]{\cat{Ban}}{_B^{\infty}}(E\wo_A F,G)
      \quad
      \cong
      \quad
      \tensor*[]{\cat{Ban}}{_A^{\infty}}\left(E,\tensor*[]{\cat{Ban}}{_B^{\infty}}(F,G)\right)
      \quad\text{in $\cat{Ban}$ (so isometrically)}
    \end{equation*}
    functorially in $E \in \tensor[]{\cat{Ban}}{_A}$, $F \in \tensor[_A]{\cat{Ban}}{_B}$ and $G \in \tensor[]{\cat{Ban}}{_B}$. In particular, non-degenerate Banach modules over a commutative Banach algebra form a {\it closed} symmetric monoidal category under $\wo_A$ in the sense of \cite[Definition 6.1.3]{brcx_hndbk-2} (or \cite[\S VII.7]{mcl_2e}, say): there is an {\it internal (or inner) hom} \cite[Definition 7.9.2]{egno} (bi)functor
    \begin{equation*}
      \left(\tensor*[_A]{\cat{Ban}}{}\right)^{\circ}  
      \times
      \tensor*[_A]{\cat{Ban}}{}
      \ni
      (F,G)
      \xmapsto{\quad}
      [F,G]
      :=
      \tensor*[_A]{\cat{Ban}}{^{\infty}}(F,G)
      \in
      \tensor*[_A]{\cat{Ban}}{}
    \end{equation*}
    with $[F,-]$ right adjoint to $-\wo_A F$ (with $\cC^{\circ}$ denoting the opposite category). $[F,G]$ is automatically convex over $A:=C(X)$ if $F$ and $G$ are \cite[Proposition 1.14]{zbMATH04134853}, so the same internal homs provide monoidal closure for
    \begin{equation*}
      \left(\tensor*[_{C(X)}]{\cat{lcBan}}{},\ \tensor[_{lc}]{\wo}{_{C(X)}}\right)
      \quad
      \cong
      \quad
      \left(\tensor*[]{\cat{Bun}}{_X},\ \wo_X\right).
    \end{equation*}
    The sheaf corresponding to $[F,G]\in \tensor*[_A]{\cat{Ban}}{}$ through the equivalence of \cite[Scholium 6.7]{hk_shv-bdl} is the $\cat{Ban}$ analogue of the sheaf hom $\mathcal{H}om(\cE,\cF)$ of \cite[\S I.5, pp.20-21]{bred_shf_2e_1997}. 
    
  \item Let $A$ be a unital commutative ring. A unital $A$-module $M$ is projective f.g. precisely \cite[Example 3.3.11]{par_qg-ncg} when it is {\it rigid} \cite[Definition 3.3.1]{par_qg-ncg} in the symmetric monoidal category $(\tensor*[_A]{\cat{Mod}}{},\ \otimes_A)$: inner homs $[M,-]$ are of the form
    \begin{equation*}
      [M,-]\cong (-)\otimes M^*\text{ in }\tensor*[_A]{\cat{Mod}}{}
      ,\quad
      M^*:=[M,A] = \tensor*[_A]{\cat{Mod}}{}(M,A).
    \end{equation*}
    For that reason, the identification of \Cref{cor:tfgfullinmod}\Cref{item:cor:tfgfullinmod:justproj} between $\tensor*[^{\cat{fg}}^{(F)}]{\cat{Bun}}{_X^{\infty}}$ and the category of projective f.g. $C(X)$-modules shows that the former category is closed (in $\left(\tensor*[]{\cat{Bun}}{_X},\ \wo_X\right)$) under inner homs:
    \begin{equation*}
      [\cE,\cF]
      \cong
      \cF\wo_X\cE^*
      ,\quad
      \forall \cF\in \tensor*[]{\cat{Bun}}{_X}
      \quad\text{and}\quad
      \forall \cE\in \tensor*[^{\cat{fg}}^{(F)}]{\cat{Bun}}{_X},
    \end{equation*}
    with $\cE^*$ the inner hom $[\cE,\text{trivial rank-1 bundle}]$. 

  \item The preceding point notwithstanding, the larger category $\tensor*[^{\cat{tfg}}^{(F)}]{\cat{Bun}}{_X}\supseteq \tensor*[^{\cat{fg}}^{(F)}]{\cat{Bun}}{_X}$ is {\it not}, generally, closed in $\tensor*[]{\cat{Bun}}{_X}$ under inner homs.

    Consider an embedding $Y\subseteq X$ of compact Hausdorff spaces and set $U:=X\setminus Y$. $C(X)$-module morphisms $C_0(U)\to C(X)$ are automatically continuous \cite[Corollary 2.9.30(ix)]{dales_autocont} and take values in $C_0(U)$ because every element of the domain $C_0(U)$ is a product of two such (Cohen factorization \cite[Theorem 32.22]{hr-2}), so they are {\it multipliers} \cite[\S 2.2]{wo} of $C_0(U)$. We thus \cite[Examples 2.2.4]{wo} have an isomorphism 
    \begin{equation*}
      C(\beta U)
      \ni
      \left(U\xrightarrow[\text{bounded cont.}]{f}\bC\right)
      \xmapsto[\quad\cong\quad]{\quad}
      \left(g\mapsto gf\right)
      \in
      \tensor*[_{C(X)}]{\cat{Mod}}{}(C_0(U), C(X))
    \end{equation*}
    for the {\it Stone-\v{C}ech compactification} \cite[\S 1.5]{ddls_ban-sp-cont-fn} $\beta U\supseteq U$. The (convex Banach) $C(X)$-module $\tensor*[_{C(X)}]{\cat{Mod}}{}(C_0(U), C(X))$, then, corresponds via \Cref{eq:bun2mod} to a bundle $\cE\xrightarrowdbl{}X$ whose fiber over $x\in X$ is $C(\psi^{-1}(x))$ for
    \begin{equation*}
      \begin{tikzpicture}[>=stealth,auto,baseline=(current  bounding  box.center)]
        \path[anchor=base] 
        (0,0) node (l) {$U$}
        +(2,.5) node (u) {$\beta U$}
        +(4,0) node (r) {$X$}
        ;
        \draw[right hook->] (l) to[bend left=6] node[pos=.5,auto] {$\scriptstyle $} (u);
        \draw[->] (u) to[bend left=6] node[pos=.5,auto] {$\scriptstyle \psi$} (r);
        \draw[right hook->] (l) to[bend right=6] node[pos=.5,auto,swap] {$\scriptstyle $} (r);
      \end{tikzpicture}
    \end{equation*}
    $\cE|_U$ is thus trivial of rank 1, while the fibers over $x\in Y=X\setminus U$ can certainly be larger than 1-dimensional, generally (e.g. $Y\subseteq X = \{0\}\subset [0,1]$). In particular,  (the bundle corresponding to) $\tensor*[_{C(X)}]{\cat{Mod}}{}(C_0(U), C(X))$ need not be continuous even when $\cE$ and $\cF$ are. 
  \end{enumerate}  
\end{remarks}




\addcontentsline{toc}{section}{References}

\Addresses

\end{document}